\newtheorem{theorem}{Theorem}[section]
\newtheorem{prop}[theorem]{Proposition}
\newtheorem{lem}[theorem]{Lemma}
\newtheorem{defn}[theorem]{Definition}
\newcommand{\R}{\mathbbm{R}}
\newcommand{\N}{\mathbbm{N}}
\renewcommand{\leq}{\leqslant}
\renewcommand{\geq}{\geqslant}
\newcommand{\comment}[1]{}
\numberwithin{equation}{section}
\title{Joint upper Banach density, VC dimensions and Euclidean point configurations}
\author{Bruno Predojević\\
Department of Mathematics, Faculty of Science, University of Zagreb\\ Bijeni\v{c}ka cesta 30, 10000 Zagreb, Croatia\\
\texttt{bruno.predojevic@math.hr}}
\date{\vspace{-5ex}}
\begin{document}
\maketitle 
\begin{abstract}
We study two related quantities which generalize the concept of upper Banach density of a set to two measurable subsets of the plane. The first of them allows us to generalize a classic result on sufficiently large distances realized in a set of positive upper density, to distances between points of two sets satisfying an appropriate density condition. The second one allows us to show that for all sufficiently large scales $t>0$ and for a smooth, closed, centrally symmetric, planar curve $\Gamma$ which bounds a convex and compact region in the plane and is of non-vanishing curvature, the family consisting of portions of translates of $t\Gamma$ has the maximal possible Vapnik--Chervonenkis dimension.
 \end{abstract}
 
\begin{center}
\textbf{Keywords:} VC dimension, density theorem, geometric measure theory, singular integral
\end{center}
\section{Introduction}
\subsection{A brief overview of previous results}
In areas as diverse as geometric measure theory, ergodic theory and combinatorics, one often strives to capture the intuitive notion of the proportion of space occupied by a given subset. One of the ways to do this rigorously is by considering the so-called upper Banach density, which is defined for a measurable set $A \subseteq \mathbbm{R}^d$ as
\begin{equation}\label{def: upper Banach density}
\overline{\delta}(A):=\limsup_{R \to \infty} \sup_{z \in \mathbbm{R}^d} \frac{\vert A \cap (z+[0,R]^d)\vert}{R^d}.    
\end{equation}
{In the appendix, we show that the limit superior in the above definition can be replaced by an actual limit.} Szek\'ely conjectured that all planar sets with positive upper Banach density necessarily realize all sufficiently large distances between pairs of their points. The conjecture was further popularized by Erd\H{o}s in \cite{Erd83:open}, sparking developments across a broad range of fields. Eventually, three distinct proofs were found: one by Furstenberg, Katznelson and Weiss \cite{FKW90:dist}; one by Falconer and Marstrand \cite{FM86:dist}; and one by Bourgain \cite{B86:roth}. {In fact, Bourgain's proof solved a generalized version of the conjecture. Namely, it holds that all measurable sets $A \subseteq \mathbbm{R}^{d+1}$ of positive upper Banach density necessarily contain the vertices of all sufficiently large dilates of an isometric copy of any non-degenerate $d$-dimensional simplex.}
This result sparked a whole series of investigations, determining which finite configurations have the property that isometric copies of all their sufficiently large dilations can be found inside sets of {positive upper Banach density}. Graham in \cite{Gra94:countex} gave the currently most general negative answer known. In particular, configurations which cannot be inscribed inside a unit sphere in $\mathbbm{R}^{d}$, cannot satisfy the analogue of Bourgain's simplex theorem. 
The most general positive result is due to Lyall and Magyar \cite{LM16:prod} and it concerns configurations that are Cartesian products of simplices. 

A fruitful path to further generalizations opens up if one is satisfied with dropping the condition of finding isometric copies of the studied configuration and instead considers ``flexible configurations'', namely distance graphs, which can be thought of as graphs which carry information about the lengths of their edges. The study of Bourgain-type results for distance graphs was pioneered by Lyall and Magyar in \cite{LM20}. {In \cite{K20:anisotrop}, among results for rigid configurations, namely simplices and boxes, Kova\v{c} obtained results for anisotropic scaling of a special class of distance graphs, namely distance trees. In \cite{KP2023} Kova\v{c} and the present author obtained dimensionally sharp embedding results for yet another special class of distance graphs, namely hypercube graphs.
Another possible generalization concerns finding configurations in very dense sets which were studied in \cite{FKY22} and \cite{KSS2026}.}
In this paper, we propose a new direction for generalization of these configuration results. Instead of having a single ambient set in which we are to look for given patterns, we study two fixed ambient sets and configurations which in some prescribed way have some of their vertices {lying }in one and the remaining ones in the other set.
\subsection{Statement of the first main result}
The simplest configuration one might consider finding between two sets is the configuration consisting of only two points, i.e., we ask under which conditions on measurable subsets $A, B \subseteq \mathbbm{R}^2$ can we say that all sufficiently large distances between them are realized. One might think that asking that both sets have  positive upper Banach density might be enough to guarantee that an analogue of the aforementioned result (answering Szek\'ely's question) holds, however it is rather straightforward to construct a counterexample. We let $A$ be the union of all balls centered around $(2^n,0)$ of radius $n$ and we let $B$ be the union of balls centered at $(-2^n,0)$ of radius $n$, where $n$ ranges over the positive integers. {Numerous other naive attempts of defining a certain ``averaged'' upper density of $A$ and $B$ also fail. For instance, one may take a union of annuli $10^n<\lvert x\rvert<2 \cdot 10^n$ and the second set to be the union of the annuli $6 \cdot 10^n<\lvert x\rvert<7 \cdot 10^n$ when $n$ ranges over the positive integers.} It turns out that it is crucial to have control over ``how close'' the points can get to each other.
\begin{defn}\label{def: Joint density for 2 sets}
    For measurable sets $A,B \subseteq \mathbbm{R}^2$, we define their joint upper Banach density as
    \[
    \overline{\delta}(A,B) := \sup_{M \geq 1} \limsup_{R \to +\infty} \sup_{z \in \mathbbm{R}^2} \inf_{M \leq r \leq \frac{R}{2}}
    \frac{1}{(2r)^2R^2}\int_{z+[0,R]^2}\int_{\mathbbm{R}^2} \mathbbm{1}_A(x)\mathbbm{1}_B(y)\mathbbm{1}_{[-r,r]^2}(x-y) \,
    \textup{d}y\,\textup{d}x.
    \]
\end{defn}
 A consequence of Proposition \ref{prop: New condition comparability} below tells us that, for a measurable set $A \subseteq \mathbbm{R}^2$, it follows that 
\[
\overline{\delta}(A,A)>0 \quad \Longleftrightarrow \quad \overline{\delta}(A) >0,
\]
thus the qualitative condition of a set having positive upper Banach density generalizes to a set having positive joint upper Banach density, in the sense of Definition \ref{def: Joint density for 2 sets}. Thus Theorem \ref{thm: two set Szekely qualitative} below is indeed a generalization of the previously mentioned classic result.
\begin{theorem}[Two-set Szek\'ely's problem]\label{thm: two set Szekely qualitative}
   Let $A,B \subseteq \mathbbm{R}^2$ be measurable sets such that $\overline{\delta}(A,B) > 0$. Then, there exists a large enough $\lambda_0=\lambda_0(A,B) > 0$ such that for all $\lambda \geq \lambda_0$, there exist $x \in A$ and $y \in B$ such that
    \[
    \lvert x- y \rvert = \lambda.
    \] 
\end{theorem}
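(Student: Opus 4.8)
The plan is to adapt the Fourier-analytic proof of the classical one-set theorem (in the form due to Bourgain, cf.\ also Falconer--Marstrand), with the joint upper Banach density $\overline\delta(A,B)$ playing the role that positive upper density plays there.

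The first step is to unwind the hypothesis. Since $\overline\delta(A,B) > 0$, there are $M \ge 1$, $\delta_0 > 0$, a sequence $R_j \to \infty$ and translates $z_j \in \mathbbm{R}^2$ such that, writing $Q_j = z_j + [0,R_j]^2$ and $\beta_r = (2r)^{-2}\mathbbm{1}_{[-r,r]^2}$ for the box average at scale $r$, one has $\langle \mathbbm{1}_{A\cap Q_j},\, \mathbbm{1}_B * \beta_r \rangle \ge \delta_0 R_j^2$ for \emph{every} $r \in [M, R_j/2]$; this is simply the substitution $y \mapsto x-y$ inside the integral defining $\overline\delta(A,B)$. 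The feature that the naive ``averaged densities'' of the introduction lack, and that will be used decisively below, is that this lower bound is uniform over the full range of scales $r \in [M, R_j/2]$.

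Next, suppose for contradiction that some large $\lambda$ is not realised, i.e.\ $|x-y| \ne \lambda$ for all $x \in A$, $y \in B$. Fix $j$ with $R_j$ enormous compared to $\lambda$, enlarge $Q_j$ to $Q_j' = Q_j + [-\lambda,\lambda]^2$ so that no relevant pair is lost, and set $f = \mathbbm{1}_{A\cap Q_j}$, $g = \mathbbm{1}_{B\cap Q_j'}$, so that the assumption reads $\langle f,\, g * \sigma_\lambda \rangle = 0$, where $\sigma_\lambda$ is the length measure on the circle of radius $\lambda$ normalised to mass $1$. I would then split $\sigma_\lambda$ in frequency: pick a radial Schwartz bump $\phi \ge 0$ with $\widehat\phi \ge 0$ and $\widehat\phi(0)=1$, and write $\sigma_\lambda = \phi^{(\lambda)} + (\sigma_\lambda - \phi^{(\lambda)})$ with $\phi^{(\lambda)}(x) = \lambda^{-2}\phi(x/\lambda)$. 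The \emph{main term} $\langle f,\, g * \phi^{(\lambda)}\rangle$ is at least a fixed constant times $\delta_0 R_j^2$: one has $\phi^{(\lambda)} \ge c\,\beta_{c'\lambda}$ pointwise for absolute constants $c, c' > 0$, and since $f, g \ge 0$ this gives $\langle f,\, g * \phi^{(\lambda)}\rangle \ge c\,\langle f,\, g * \beta_{c'\lambda}\rangle \ge c\delta_0 R_j^2$ as long as $c'\lambda \in [M, R_j/2]$, which holds once $\lambda \ge \lambda_0$ for a suitable $\lambda_0$ of order $M$ and $R_j$ is large --- and this is exactly where the uniformity over scales enters. For the \emph{error term} $\langle f,\, g * (\sigma_\lambda - \phi^{(\lambda)})\rangle = \int \widehat f(\xi)\,\overline{\widehat g(\xi)}\,(\widehat{\sigma_1} - \widehat\phi)(\lambda\xi)\,d\xi$ I would cut the $\xi$-space into the very low frequencies $|\xi| \le \eta/\lambda$, the band $|\xi|$ comparable to $1/\lambda$, and the high frequencies $|\xi| \ge \eta^{-1}/\lambda$, with $\eta$ a small power of $\delta_0$. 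The very low frequencies are harmless because $\widehat{\sigma_1} - \widehat\phi$ vanishes to second order at $0$ (both transforms are smooth, radial, and equal $1$ there), so the contribution is at most of order $\eta^2 R_j^2$; the high frequencies are harmless by the stationary-phase bound $|\widehat{\sigma_1}(u)| \le C(1+|u|)^{-1/2}$ (and the rapid decay of $\widehat\phi$), giving at most a power of $\eta$ times $R_j^2$; both estimates close by $\int |\widehat f||\widehat g| \le \|f\|_2 \|g\|_2 \le C R_j^2$.

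The genuine obstacle is the intermediate band where $|\xi|$ is comparable to $1/\lambda$. There $(\widehat{\sigma_1}-\widehat\phi)(\lambda\xi)$ is merely of size $O(1)$ --- the oscillating Bessel tail of $\widehat{\sigma_1}$ near $|u| \sim 1$ cannot be matched by the Fourier transform of any nonnegative mollifier --- while the density hypothesis, far from providing any upper bound on $\int_{|\xi| \sim 1/\lambda}|\widehat f|^2$, actually combines with $\langle f, g*\sigma_\lambda\rangle = 0$ to \emph{force} $|\widehat f|\,|\widehat g|$ to carry mass of order $\delta_0 R_j^2$ on this band (hence, by Cauchy--Schwarz, both $\widehat f$ and $\widehat g$ carry $L^2$-mass there of order a power of $\delta_0$ times $R_j^2$). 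To defeat this I would not hold the comparison scale fixed at $\lambda$ but use the correlation lower bounds at \emph{all} dyadic scales in $[M, R_j/2]$ simultaneously, running a pigeonhole/density-increment argument: either the middle-band estimate can be made to close at some admissible scale comparable to $\lambda$, or one extracts from the scale-$\lambda$ density fluctuation of $A$ (and of $B$) a sub-cube on which the joint density $\overline\delta(A,B)$ has strictly increased, which after finitely many steps contradicts $\overline\delta(A,B) \le 1$. I expect the heart of the proof --- and the reason the definition of $\overline\delta(A,B)$ carries an infimum over scales --- to be precisely this last step: turning a genuine scale-$\lambda$ fluctuation of the individual sets into an honest increment of their \emph{joint} density on a smaller cube.
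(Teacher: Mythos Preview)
Your framework --- Fourier decomposition of $\sigma_\lambda$, main term controlled by the infimum-over-scales in the definition of $\overline\delta(A,B)$, high frequencies by the $|\xi|^{-1/2}$ decay of $\widehat\sigma$ --- matches the paper. The gap is at the step you yourself flag as the heart of the matter: the intermediate band. You propose a density-increment argument, hoping to convert mass of $|\widehat f||\widehat g|$ on $|\xi|\sim 1/\lambda$ into an increment of the \emph{joint} density on a sub-cube. But $\overline\delta(A,B)$ carries an infimum over the whole range $M\le r\le R/2$; passing to a sub-cube shrinks that range, and there is no evident mechanism by which a scale-$\lambda$ fluctuation of $\mathbbm 1_A$ or $\mathbbm 1_B$ separately forces the \emph{infimum} of the cross-correlations to go up. You have not supplied that mechanism, and it is far from clear one exists.

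The paper bypasses density increment entirely, using instead a multi-scale pigeonhole in the style of Cook--Magyar--Pramanik. The reduction is not ``some large $\lambda$ fails'' but rather: if the theorem fails then there are arbitrarily many lacunary failing scales $\lambda_1<\lambda_2<\cdots<\lambda_J$, and the quantitative statement to prove is that for $J=J(\delta)$ and any such $J$-tuple inside $[M,R/2]$, at least one $\lambda_j$ must succeed. One introduces a second smoothing parameter $\varepsilon$ and writes $\mathcal N^0_\lambda = \mathcal N^1_\lambda + (\mathcal N^\varepsilon_\lambda-\mathcal N^1_\lambda) + (\mathcal N^0_\lambda-\mathcal N^\varepsilon_\lambda)$. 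The first piece is $\gtrsim\delta R^2$ by the joint-density hypothesis; the third is $\lesssim\varepsilon^{1/2}R^2$ by circle decay, exactly as in your high-frequency bound. The middle piece --- your intermediate band --- is handled not at a single $\lambda$ but by summing over all $J$ scales: on the Fourier side it becomes $\int|\widehat{\mathbbm 1_A}||\widehat{\mathbbm 1_B}|\sum_j|\widehat{\mathbbm k}(\lambda_j t\xi)|$, and lacunarity of the $\lambda_j$ makes the bumps $\widehat{\mathbbm k}(\lambda_j\cdot)$ essentially disjoint, so the inner sum is $O(1)$ \emph{uniformly in $J$}. Thus $\sum_j|\mathcal N^\varepsilon_{\lambda_j}-\mathcal N^1_{\lambda_j}|\lesssim \log(\varepsilon^{-1})R^2$, and pigeonholing produces one good $\lambda_j$. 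Choosing $\varepsilon$ a small power of $\delta$ and $J$ of order $\delta^{-1}\log(\varepsilon^{-1})$ closes the argument with no increment needed.
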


A prototypical example of sets $A$ and $B$ for which $\overline{\delta}(A,B)>0$ are the squares of a chess-board pattern in the plane. 
\subsection{VC dimension and statement of second main result}
The concept of VC dimension was first introduced by Vapnik and Chervonenkis in \cite{VC1971} and it is used to formalize the idea of complexity of a family of sets. Here, we briefly restate it. For an excellent brief explanation of the connections with statistical learning theory, we advise the reader to look at the introduction of \cite{FIMW21}.
\begin{defn}
    Let $X$ be some ambient set and let $\mathcal{T} \subseteq \mathcal{P}(X)$ be some family of subsets of $X$. We say that a finite set $C$ of cardinality $n\in \N$ {is shattered by} $\mathcal{T}$ if for every $D \subseteq C$, there exists a $T \in \mathcal{T}$ such that 
    \[
    T\cap C = D.
    \]
\end{defn}
\begin{defn}
 Let $X$ be some ambient set and let $\mathcal{T} \subseteq \mathcal{P}(X)$ be some family of subsets of $X$. We say that $\mathcal{T}$ has Vapnik–-Chervonenkis (VC) dimension equal to $n$ if $n$ is the largest positive integer for which there exists a finite set $C \subseteq X$ of cardinality $n$ which is shattered by $\mathcal{T}$; we denote this by
\[
VC(\mathcal{T})=n.
\]   
 If no such positive integer $n$ exists, then we set
\[
VC(\mathcal{T})=0.
\]
 If arbitrarily large such positive integers $n$ exist, then we set
\[
VC(\mathcal{T})=\infty.
\]
\end{defn}
 Euclidean geometry provides us with the following elementary example of a family for which we can explicitly determine the VC dimension. Let $E \subseteq \R^d$ and $t>0$ be arbitrary. Consider the family of $E$-centered spheres of radius $t$ intersected with $E$,
    \[
    \mathcal{T}_{t}(E):=\{ S_t(e) \cap E : e \in E\},
    \]
    where
    \[
    S_t(e) := \{y \in \R^d : \lvert e - y \rvert =t\}.
    \]
From elementary geometry, chiefly the fact that $d+1$ distinct points uniquely determine a $(d-1)$-sphere in $d$ dimensions, one can easily deduce that
\[
VC(\mathcal{T}_{t}(\mathbbm{R}^d))=d+1
\]
for every $t>0$. The interested reader might also consult Definition 1.5.5. and the comments thereafter in \cite{McD2023}.

In \cite{FIMW21}, Fitzpatrick, Iosevich, McDonald and Wyman made the connection between VC dimension of families of circles and Ramsey theory-type questions, namely they asked how large, where ``large'' should be understood in a suitable sense, must the set $E$ be, before it becomes unavoidable for the family $\mathcal{T}_t(E)$ to have the maximal possible VC dimension. They showed that in the setting of $2$-dimensional vector spaces over finite fields $\mathbbm{F}_q^2$, with the appropriate modifications of the above definitions, if one has $\lvert E \rvert\geq C q^{\frac{15}{8}}$, then 
\[
VC(\mathcal{T}_{t}(E))=3,
\]
which can be seen to be the maximal dimension possible in their setting. An analogous version of the above claim in higher dimensional vector spaces over finite fields was studied \cite{ABCIJLXMcDMMRV2024}. In fact, Ramsey theory-type questions relating to VC dimensions have, in recent years, sparked a whole series of investigations, such as \cite{ABCIJLMcDMMRV2025}, \cite{IJMcD2021}, \cite{IMcDS2023}, \cite{McDSW2025}. For further results and a nice overview of problems in the context of finite fields, we advise the reader to look at McDonald's doctoral dissertation \cite{McD2023}, where the reader can also find results in the Euclidean setting, with the notion of largeness measured by the Hausdorff dimension. In fact, measuring largeness via Hausdorff dimension seems to be a very fruitful approach in the Euclidean setting; the interested reader can consult \cite{IMMcDMcD2025V}.

In this paper, we will focus on the two dimensional Euclidean case where $VC(\mathcal{T}_{t}(\mathbbm{R}^2))=3$. One of the easy consequences of Theorem \ref{thm: VC dimension, main theorem} below is the fact that if $E \subseteq \mathbbm{R}^2$ satisfies $\overline{\delta}(E)>0$, then $VC(\mathcal{T}_{t}(E))=3$, for all sufficiently large $t$. Therefore, having  positive upper Banach density seems to be another suitable measure of largeness in the Euclidean setting, if one is interested in the aforementioned Ramsey theory-type questions. As with the generalization of Szek\'ely's problem, we are interested in generalizing this claim to two sets. Our results could very well be stated with circles, but it turns out that we can be even more general and can replace translates of the circle by translates of the curve $\Gamma$ which is essentially circle-like. {More precisely we let $\Gamma$ be a smooth, closed, and centrally symmetric planar curve with non-vanishing curvature. In addition, we require that $\Gamma$ is the boundary of a convex and compact region in the plane with a non-empty interior}. So, we study translates of $t\Gamma$ by vectors from a measurable set $B \subseteq \mathbbm{R}^2$, with the additional caveat that we do not consider the entirety of the curve $b +t\Gamma$, but only a portion of its arc, which lies inside a fixed measurable set $A$. Thus, for a fixed $t>0$, we consider the family
\[
\mathcal{T}_t(A,B):=\{(b+ t\Gamma) \cap A : b\in B\}.
\]
It is quite easy to see that our family can be empty for certain choices of $A$ and $B$, so we are yet again interested in finding a condition on $A$ and $B$ which will guarantee that for all sufficiently large $t>0$, not only are the families $\mathcal{T}_t(A,B)$ non-trivial, but they have the maximal possible $VC$ dimension, which as another consequence of Theorem \ref{thm: VC dimension, main theorem} below turns out to be $3$. We now present a quantity $\overline{\delta}_{VC}(A,B)$, positivity of which is sufficient if one wishes to obtain the desired Ramsey theory-type result. 
We start by introducing some helpful notation. For measurable sets $A,B \subseteq \mathbbm{R}^2$ and for $x,v_1,v_2,v_3,s_1,s_2,s_3 \in \mathbbm{R}^2$, we define $\mathcal{F}_{A,B}(x;v_1,v_2,v_3;s_1,s_2,s_3)$ as the product
\begin{align*}
&\mathbbm{1}_{B}(x)
\mathbbm{1}_{A}(x+v_1)\mathbbm{1}_{A}(x+v_2)\mathbbm{1}_{A}(x+v_3)\\
    &\mathbbm{1}_{B}(x+v_1+v_2)\mathbbm{1}_{B}(x+v_1+v_3)\mathbbm{1}_{B}(x+v_2+v_3)\\
    &\mathbbm{1}_{B}(x+v_1+s_1)\mathbbm{1}_{B}(x+v_2+s_2)\mathbbm{1}_{B}(x+v_3+s_3).
\end{align*}
This product corresponds to a particular placement of vertices of a flexible configuration from Theorem \ref{thm: VC dimension, configuration variant qualitative} below between sets $A$ and $B$; we advise the reader to look at Figure \ref{fig:Config}.
\begin{figure}
    \centering   \includegraphics[width=0.85\linewidth]{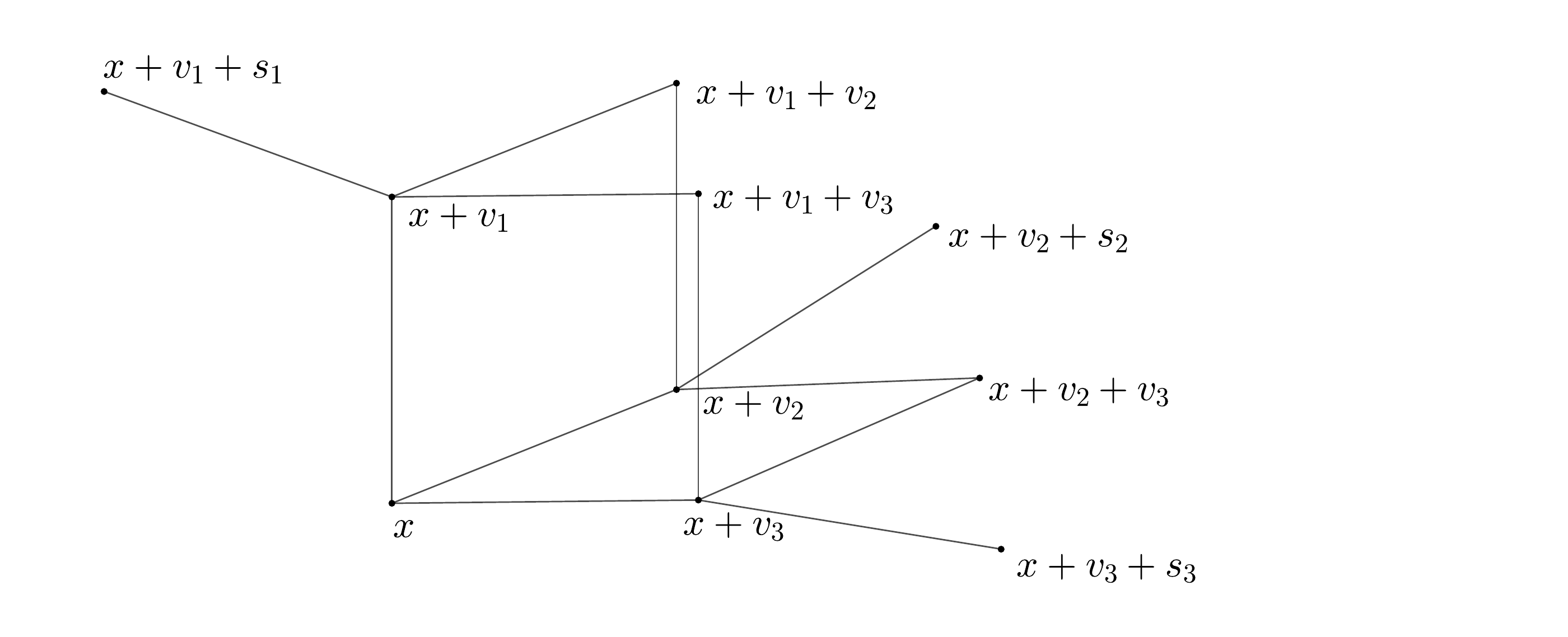}
    \caption{Flexible configuration from Theorem \ref{thm: VC dimension, configuration variant qualitative}}
    \label{fig:Config}
\end{figure}
\begin{defn}\label{def: density nearness condition}
For two measurable sets $A,B \subseteq \mathbbm{R}^2$, we define
\begin{align*}
        \overline{\delta}_{VC}(A,B) := \sup_{M \geq1}\!\limsup_{R \to +\infty} \sup_{z \in \mathbbm{R}^2} \inf_{M \leq r \leq 2^{-6}R}
    \frac{1}{(2r)^{12}R^2}\!\int_{z+[0,R]^2}\!\int_{([-r,r]^2)^6}
    &\!\mathcal{F}_{A,B}(x;v_1,v_2,v_3;s_1,s_2,s_3) \\
&\,\textup{d}v_1\,\textup{d}v_2\,\textup{d}v_3\,\textup{d}s_1\,\textup{d}s_2\,\textup{d}s_3\,\textup{d}x.
\end{align*}
\end{defn}
 Another consequence of Proposition \ref{prop: New condition comparability} below tells us that for a measurable set $E\subseteq \mathbbm{R}^2$, we have the chain of equivalences 
\[
\overline{\delta}(E)>0 \Longleftrightarrow \overline{\delta}_{VC}(E,E)>0\Longleftrightarrow \overline{\delta}_{VC}(\mathbbm{R}^2,E)>0\Longleftrightarrow \overline{\delta}_{VC}(E,\mathbbm{R}^2)>0,
\]
which tells us that the qualitative assumption of quantity from Definition \ref{def: density nearness condition} being positive for $A=B=E$ is equivalent to the condition of $E$ being a set of positive upper Banach density, so our new quantity is in a certain sense also a generalization of upper Banach density. Our second main {result} is
\begin{theorem}\label{thm: VC dimension, main theorem}
 Let $A,B \subseteq \mathbbm{R}^2$ be measurable and such that $\overline{\delta}_{VC}(A,B)>0$. { Let $\Gamma$ be a planar curve of non-vanishing curvature, which is closed, smooth, centrally symmetric. Assume also that $\Gamma$ is the boundary of some convex region in the plane, with a non-empty interior.} There exists a critical scale $t_0>0$ such that, for all $t \geq t_0$, it follows that $\mathcal{T}_t(A,B)$ has the Vapnik--Chervonenkis dimension equal to $3$.
\end{theorem}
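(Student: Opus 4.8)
The plan is to deduce Theorem \ref{thm: VC dimension, main theorem} from a \emph{configuration-embedding} result (the promised Theorem \ref{thm: VC dimension, configuration variant qualitative}), whose hypothesis is exactly the positivity of $\overline{\delta}_{VC}(A,B)$. The key geometric observation is that shattering a set $C=\{p_1,p_2,p_3\}$ of three points by the family $\mathcal{T}_t(A,B)=\{(b+t\Gamma)\cap A:b\in B\}$ amounts to finding, for \emph{each} of the $2^3=8$ subsets $D\subseteq C$, a centre $b_D\in B$ such that $b_D+t\Gamma$ passes through exactly the points of $D$ among $p_1,p_2,p_3$ while \emph{avoiding} the others, and all of $p_1,p_2,p_3$ must lie in $A$. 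The union over the eight required translates, together with the three points, forms a single finite flexible configuration; the ten indicator factors in $\mathcal{F}_{A,B}$ are arranged (see Figure \ref{fig:Config}) so that $x$ plays the role of one centre in $B$, the shifts $x+v_i$ land the three points $p_i$ in $A$, the sums $x+v_i+v_j$ are the centres of the three translates passing through pairs, and the $x+v_i+s_i$ are auxiliary centres; the curve $t\Gamma$ being centrally symmetric with non-vanishing curvature is what lets us realise ``exactly through two of them'' and ``through exactly one of them'' simultaneously on genuine translates of $t\Gamma$. So once Theorem \ref{thm: VC dimension, configuration variant qualitative} supplies such a configuration for every $t\ge t_0$, the set $C$ it produces is shattered, giving $VC(\mathcal{T}_t(A,B))\ge 3$.

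For the matching upper bound $VC(\mathcal{T}_t(A,B))\le 3$, I would argue purely geometrically and locally: suppose four distinct points $p_1,p_2,p_3,p_4$ were shattered. Since $\Gamma$ is smooth, closed, centrally symmetric and strictly convex (non-vanishing curvature), each translate $b+t\Gamma$ is the boundary of a translate of a fixed strictly convex body $t K$. A translate of a strictly convex curve meets any line in at most two points, and — the crucial rigidity — \emph{two} distinct points of the plane are passed through by translates of $t\Gamma$ forming a one-parameter family whose centres sweep out an arc, while a \emph{third} point in ``general position'' is hit by at most finitely many (generically exactly two) of these, analogous to the circle case where three points determine the circle. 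More precisely, for strictly convex centrally symmetric $\Gamma$ one shows that \emph{three} points in the plane lie on at most finitely many translates of $t\Gamma$ (the locus of admissible centres for two fixed points is a smooth curve, and requiring a third point cuts it to a finite set), so there is a subset $D\subseteq\{p_1,\dots,p_4\}$ of size $3$ that cannot be cut out. One has to be slightly careful because $\mathcal{T}_t(A,B)$ consists of \emph{intersections with $A$}, so ``$T\cap C=D$'' only requires the translate to pass through $D$ and miss $C\setminus D$ (a fourth point being off the curve is easy); the content is purely that no translate of $t\Gamma$ passes through four prescribed points, which follows from the dimension count above. I would package this as a short self-contained lemma about strictly convex symmetric curves.

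The main obstacle, and the step where the real work lies, is \emph{not} in this paper's portion — it is Theorem \ref{thm: VC dimension, configuration variant qualitative}, the flexible-configuration embedding theorem, which must be proved by the harmonic-analytic/counting machinery (the same circle of ideas as Bourgain's theorem and the Lyall--Magyar distance-graph results referenced in the introduction), exploiting the curvature of $\Gamma$ via decay of the associated surface measure's Fourier transform, and with the infimum over scales $M\le r\le 2^{-6}R$ in Definition \ref{def: density nearness condition} handling the ``how close points can get'' issue flagged after Definition \ref{def: Joint density for 2 sets}. Granting that theorem, the deduction of Theorem \ref{thm: VC dimension, main theorem} is the bookkeeping described above: verify that the ten-factor configuration $\mathcal{F}_{A,B}$ indeed encodes the eight-translate shattering pattern of a three-point set (checking in particular that the translates required to \emph{avoid} certain points can be taken to do so, using that a generic perturbation of a centre moves the curve off any finite point set — here one may need to thicken $A$ slightly or note that the avoidance conditions are open and hence automatically satisfiable once the passing conditions hold on a positive-measure set of parameters), and then combine with the upper-bound lemma to conclude equality $VC(\mathcal{T}_t(A,B))=3$ for all $t\ge t_0$.
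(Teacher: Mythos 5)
Your overall architecture matches the paper: reduce to the configuration-embedding theorem (Theorem~\ref{thm: VC dimension, configuration variant qualitative}), build a three-point shattering set from the configuration it supplies, and prove the complementary upper bound $VC(\mathcal{T}_t(A,B))\leq 3$ by a geometric rigidity argument about translates of $t\Gamma$. You also correctly identify that the center placements $x$, $x+v_i+v_j$, $x+v_j+s_j$ realize the full set, the pairs, and the singletons respectively, and that central symmetry is what makes, e.g., $(x+v_i)-(x+v_i+v_j)=-v_j$ land on $t\Gamma$.

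However, there is a genuine gap at the avoidance step, and your proposed fix would not work. You suggest that one can guarantee the translates \emph{miss} the unwanted points of $C$ either by ``thickening $A$ slightly'' or by an openness/generic-perturbation argument over a ``positive-measure set of parameters''. Neither is available: $A$ and $B$ are fixed measurable sets (thickening changes the theorem), and once Theorem~\ref{thm: VC dimension, configuration variant qualitative} has delivered a single configuration $(x;v_1,v_2,v_3;s_1,s_2,s_3)$, the centers are specific points, not a family over which to perturb. The paper's mechanism is instead rigid and deterministic: it is Lemma~\ref{lem: topological translations along curves} together with the clause of Theorem~\ref{thm: VC dimension, configuration variant qualitative} asserting that all $10$ configuration points are \emph{mutually distinct}. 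Concretely, for the pair case, if $b=x+v_i+v_j$ also hit $x+v_k$, then $\{x+v_1,x+v_2,x+v_3\}$ would lie on the two distinct translates $x+t\Gamma$ and $b+t\Gamma$, contradicting part~(a) of the lemma (two distinct translates of a smooth strictly convex curve meet in at most two points); for the singleton case, $x+v_i,x+v_j$ would lie on three distinct translates (centered at $x$, $x+v_i+v_j$, $x+v_j+s_j$), contradicting part~(b). That is where the mutual distinctness of the configuration points is consumed, and it is why the conclusion of Theorem~\ref{thm: VC dimension, configuration variant qualitative} explicitly asserts distinctness. Without these two geometric facts about $\Gamma$ (which in turn rest on the supporting-line property and the fact that lines meet $\Gamma$ in at most two points), the shattering claim does not close.

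Relatedly, your upper-bound argument is in the right spirit but invokes the wrong statement. You say ``three points lie on at most finitely many translates of $t\Gamma$''; the lemma actually needed, and the one the paper uses, is about \emph{two} points: at most two translates of $\Gamma$ pass through a given pair of distinct points (Lemma~\ref{lem: topological translations along curves}(b)). The contradiction with a shattered $4$-set then comes from observing that the pair $\{c_1,c_2\}$ would have to lie on the three distinct translates cutting out $\{c_1,c_2,c_3,c_4\}$, $\{c_1,c_2,c_3\}$, and $\{c_1,c_2,c_4\}$. Your ``dimension count'' heuristic is plausible but would need to be replaced by this sharper two-point/two-translate bound to make the argument go through cleanly.
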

 With the help of elementary results from differential geometry, which we briefly recall in Section \ref{sec: Geometry of general curves in the plane}, we show that Theorem \ref{thm: VC dimension, main theorem} reduces to the following theorem about flexible configurations in $\mathbbm{R}^2$. This theorem is analogous to the distance graph result from \cite{KP2023} and which would, by minor modifications, already imply a version of Theorem \ref{thm: VC dimension, main theorem} in case $A=B$ and $\Gamma$ being the unit circle.
\begin{theorem}[VC dimension, configuration variant]\label{thm: VC dimension, configuration variant qualitative}
  Let $A,B \subseteq \mathbbm{R}^2$ be measurable sets which satisfy $\overline{\delta}_{VC}(A,B)>0$. Let $\Gamma$ be a planar curve { of non-vanishing curvature}, which is closed, smooth, centrally symmetric and which is the boundary of some convex region in the plane, with a non-empty interior. There exists a critical scale $t_0 > 0$ such that, for all $t \geq t_0$, there exists a point $x \in B$ and vectors $v_1,v_2,v_3,s_1,s_2,s_3 \in t\Gamma$, such that
  \begin{equation}\label{eq: graph points which are translations}
    x,x+v_1+v_2,x+v_1+v_3,x+v_2+v_3,x+v_1+s_1,x+v_2+s_2,x+v_3+s_3 \in B
  \end{equation}
  and
  \begin{equation}\label{eq: graph points which shatter}
   x+v_1,x+v_2,x+v_3 \in A
 \end{equation}
and that all $10$ points listed above are mutually distinct.
\end{theorem}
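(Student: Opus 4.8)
The plan is to deduce the configuration statement from the positivity of $\overline{\delta}_{VC}(A,B)$ by a density-increment/counting argument of the kind pioneered by Bourgain and adapted for flexible configurations by Lyall--Magyar and by Kova\v{c}--Predojevi\'c. First I would fix $M\geq 1$ and a scale $R$ witnessing a lower bound $\overline{\delta}_{VC}(A,B)\geq\theta>0$ in Definition \ref{def: density nearness condition}, together with a centre $z$, so that for every $r\in[M,2^{-6}R]$ the normalized integral of $\mathcal{F}_{A,B}$ over the box $z+[0,R]^2$ and over $v_1,v_2,v_3,s_1,s_2,s_3\in[-r,r]^2$ is at least $\theta$. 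The target is to show that for all large $t$ one can instead restrict the $v_i$ and $s_i$ to the curve $t\Gamma$ (a one-dimensional set) and still obtain a nonzero contribution; passing from the $[-r,r]^2$-averages to the curve amounts to replacing the Lebesgue measure on the box by the arclength measure on $t\Gamma$, suitably normalized, which is the flexible-configuration analogue of fixing the side lengths of a simplex in Bourgain's theorem.

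The key steps, in order, would be: (1) set up the relevant multilinear integral $\mathcal{N}_t$ — the integral of $\mathcal{F}_{A,B}$ with each pair $(v_i,s_i)$ integrated against (a smooth approximation of) surface measure $\sigma_{t\Gamma}$ on $t\Gamma$ — and perform a Fourier-analytic decomposition of each copy of $\widehat{\sigma_{t\Gamma}}$ into a low-frequency part (contributing the main term, essentially the product of the relevant box-averaged densities, which is $\gtrsim\theta^{O(1)}>0$ by the hypothesis applied at a suitable scale $r$ comparable to $t$), a high-frequency part (handled by the $L^\infty$-decay $|\widehat{\sigma_{t\Gamma}}(\xi)|\lesssim (t|\xi|)^{-1/2}$ coming from non-vanishing curvature of $\Gamma$, which makes this piece $o(1)$ as $t\to\infty$ after an $L^2$/Plancherel bound on the sets restricted to the large box), and an intermediate-frequency part (controlled by an averaging/pigeonhole over a range of scales $r$, exploiting that the hypothesis gives the density bound uniformly for \emph{all} $r$ in a long dyadic range, so a single bad annulus of frequencies cannot persist). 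This three-piece split is exactly the structure of the arguments in \cite{B86:roth, LM20, KP2023}, and the ten-point incidence pattern is engineered precisely so that the combinatorial "shattering" configuration decomposes into triangles whose edges are $v_i$, $s_i$, or $v_i+s_j$ — each edge lying on $t\Gamma$ — so that Fourier analysis on $t\Gamma$ applies edge by edge. (2) Having shown $\mathcal{N}_t>0$ for large $t$, conclude that there is a genuine point $x\in B$ and vectors $v_1,v_2,v_3,s_1,s_2,s_3\in t\Gamma$ making all ten indicator factors equal to $1$, i.e., \eqref{eq: graph points which are translations} and \eqref{eq: graph points which shatter} hold. (3) Finally, remove degeneracies: the set of parameters for which two of the ten points coincide is a finite union of lower-dimensional (measure-zero, with respect to the product of arclength measures) subvarieties — each coincidence is an equation such as $v_i=v_j$, $s_i=0$, $v_i+s_i=v_j$, etc., which cuts out a proper subset of $(t\Gamma)^6$ — so a positive-measure set of admissible non-degenerate parameters remains, and in particular mutually distinct points exist.

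The main obstacle I expect is the intermediate-frequency estimate, i.e., showing that the "error" terms in the Fourier decomposition are genuinely negligible uniformly in the relevant range of scales. Unlike the single-set case, here one must juggle the interaction of three curve-measures $\sigma_{t\Gamma}$ (on $v_1,v_2,v_3$ through the $B$-factors linking them) with three more (on $s_1,s_2,s_3$), and the decay $|\widehat{\sigma_{t\Gamma}}|\lesssim (t|\xi|)^{-1/2}$ is only $1/2$ a power, so one cannot afford to be lossy: the argument must be arranged so that enough factors carry oscillation simultaneously, or else one must pass through the scale-averaging trick (averaging over $r\in[M,2^{-6}R]$ dyadically and using that the hypothesis is a supremum over $M$ of a liminf with an infimum over $r$) to kill the single dangerous frequency scale. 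A secondary technical point is that $\Gamma$ is only assumed smooth, closed, centrally symmetric, convex and of non-vanishing curvature rather than literally a circle, so one needs the general stationary-phase estimate for $\widehat{\sigma_{t\Gamma}}$ and the elementary differential geometry of Section \ref{sec: Geometry of general curves in the plane} to justify the reduction of Theorem \ref{thm: VC dimension, main theorem} to this configuration statement; but once the surface-measure decay is in hand, the rest of the skeleton is the by-now-standard Bourgain-type machinery.
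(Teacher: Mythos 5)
Your skeleton is broadly the right one — a Bourgain-style counting argument, lacunary scales, a three-part decomposition of the multilinear form, stationary-phase decay for the curve measure, and a measure-theoretic removal of degeneracies at the end — and you correctly identify that the generality of $\Gamma$ is absorbed through Lemma~\ref{lem: Fourier dimension estimate}. The organizational difference from the paper is cosmetic: rather than splitting each $\widehat{\mu_{t\Gamma}}$ into low/mid/high \emph{frequency} bands directly, the paper introduces a smoothness parameter and writes $\mathcal{N}^0_\lambda = \mathcal{N}^1_\lambda + (\mathcal{N}^\varepsilon_\lambda - \mathcal{N}^1_\lambda) + (\mathcal{N}^0_\lambda - \mathcal{N}^\varepsilon_\lambda)$, following Cook--Magyar--Pramanik; these are essentially the same decomposition by a change of perspective.

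However, there is a genuine gap precisely at the step you flag as the ``main obstacle,'' and your proposed resolution would not work. The term $\mathcal{N}^\varepsilon_\lambda - \mathcal{N}^1_\lambda$ (your intermediate-frequency piece) cannot be reduced to a single convolution against a well-decaying kernel, because it genuinely involves six interacting curve measures tied together through the hypercubic structure of $\mathcal{F}_{A,B}$: isolating one edge leaves five others unintegrated, and Plancherel applied naively loses powers of $R$. You suggest handling this by ``averaging/pigeonholing over a range of scales $r$'' from the hypothesis, but $r$ there is the box side-length that feeds the \emph{structured} lower bound; it is not the scale that governs the oscillatory cancellation in the error term. The paper instead pigeonholes over a finite \emph{lacunary} family $\lambda_1,\dots,\lambda_J$ of curve scales (Theorem~\ref{thm: VC dimension, configuration variant quantitative}), and — crucially — bounds the resulting sum via a self-contained singular Brascamp--Lieb inequality (Theorem~\ref{thm: Singular Brascam-Lieb bound}, Proposition~\ref{prop: Singular Brascamp Lieb inductive statement}), proved by an iterated Cauchy--Schwarz ``symmetrization'' in physical space in the style of Kova\v{c} and Durcik, combined with the Gaussian domination identities \eqref{eq: Domination of Gaussian with t lambda}--\eqref{eq: Domination of Gaussian with r} and the lacunarity of the $\lambda_j$. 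This multilinear estimate is the substantive input your outline is missing; without some version of it (or Lemma~2 of \cite{DT20}), the error part does not close, and you cannot deduce $\mathcal{N}^0_{\lambda_j}\gtrsim_\delta R^2$ for a good scale $\lambda_j$ from the structured and uniform bounds alone. A smaller point: ``for all $t\geq t_0$'' is obtained by contradiction against a lacunary sequence of hypothetical bad scales, not by directly showing $\mathcal{N}_t>0$ for each large $t$; the quantitative theorem only guarantees one good scale among $J$ lacunary ones, so you should make the contrapositive step explicit.
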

To prove Theorem \ref{thm: VC dimension, configuration variant qualitative}, we follow the approach which has its roots in the work of Bourgain from \cite{B86:roth} and which was further developed by Cook, Magyar and Pramanik in \cite{CMP15:roth}. Namely we shall introduce a certain integral counting form, which we strive to bound from below by approximating it with its suitably smoothed out versions. This approach was further polished and applied in a series of papers by Durcik, Kova\v{c} and Rimani\'c, such as \cite{DKR18}, \cite{DK21}, \cite{DK22}, \cite{K20:anisotrop} and in a series of papers by Lyall and Magyar such as \cite{LM16:prod}, \cite{LM19:hypergraphs}, \cite{LM20}.
We emphasize that the key part of the aforementioned approximation is a bound for a particular singular Brascamp--Lieb form. The study of such forms has seen significant development in the last two decades, starting from the pioneering work of Kova\v{c} in \cite{Kov12} and ranging through further improvements by Durcik, Kova\v{c}, Thiele and Slav\'{\i}kov\'{a} in a series of papers such as \cite{Dur15, Dur17, DT20,DST22}, where \cite{DST22} is currently the most general known result. In principle, the estimate we shall require follows from Lemma~2 from \cite{DT20} by considering certain skew projections, which would again lead to a somewhat elaborate argument. Therefore, we prefer to give a short self-contained proof, while fully acknowledging that the approach is by no means new or original.
\subsection{Organization of the paper}
We briefly comment on how the rest of the paper is organized. Section \ref{sec: Preliminaries} focuses on preliminary results and is subdivided into Subsection \ref{sec: Gaussians and Fourier analysis}, in which we recall basic facts about the Fourier transform and Gaussian function, and Subsection \ref{sec: Geometry of general curves in the plane}, in which we recall basic facts from differential geometry of smooth curves in the plane. In the latter subsection we also prove Lemma \ref{lem: topological translations along curves} which, as mentioned, will be the key link in showing that Theorem \ref{thm: VC dimension, configuration variant qualitative} implies Theorem \ref{thm: VC dimension, main theorem}. Section \ref{sec: New definition is a generalization} is devoted to the condition which is a simultaneous generalization of Definitions \ref{def: Joint density for 2 sets} and \ref{def: density nearness condition}. It allows us to prove Proposition \ref{prop: New condition comparability}, which shows that our new conditions indeed generalize the condition of a set having positive upper Banach density. Section \ref{sec: Szekely and motivation} is devoted to the proof of Theorem \ref{thm: two set Szekely qualitative}. Section \ref{sec: VC dimensions proofs} is concerned with the proof of Theorem \ref{thm: VC dimension, main theorem}.
\section{Preliminaries and notation}\label{sec: Preliminaries}
We shall write $A \lesssim_PB$ if the inequality $A \leq C_PB$ holds for some unimportant finite constant $C_P>0$, which is allowed to depend on a set of parameters $P$. Similarly, we write $A \gtrsim_P B$ if $B \lesssim_P A$ holds and we also write $A\sim_P B$ if both $A \lesssim_PB$ and $A \gtrsim_PB$ hold.
We shall write $\mathbbm{1}_A$ for the indicator function and $\mathcal{P}(A)$ for the power set of a set $A$ and write $\textup{Int}(A)$ for the interior of $A$.
Since both problems we study concern the Euclidean plane $\mathbbm{R}^2$, unless specified otherwise, the reader can assume that all our functions have $\mathbbm{R}^2$ as their domain. For $\eta \in \{0,1\}^n$ and $v \in (\mathbbm{R}^2)^n$ we often write
\[
\eta \cdot v := \eta_1v_1+\eta_2v_2+\dots+\eta_nv_n,
\]
where, even if not explicitly, we assume that all tuples, either of numbers or of vectors, have coordinates denoted by the same letter but with indices, such as for instance $\eta=(\eta_1,\eta_2,\dots,\eta_n)$ and $v=(v_1,v_2,\dots,v_n)$. We also use the same symbol to denote both the above product and the standard Euclidean inner product on $\mathbbm{R}^2$ and we trust that it will be clear from context which meaning we assign to it. We shall make use of the following ``freeze functions''. For positive integers $k \leq n$ and $i_{1},i_{2},\dots,i_{k} \in \left\{ 1,2,\dots,n \right\}$ mutually distinct and for $a_{1},a_{2},\dots,a_{k} \in \left\{ 0,1 \right\}$, we consider the function $C_{(i_{1},a_{1}),(i_{2},a_{2}),\dots,(i_{k},a_{k})}:\left\{ 0,1 \right\}^{n}\to \left\{ 0,1 \right\}^n$ defined in such a way that if
\[
C_{(i_{1},a_{1}),(i_{2},a_{2}),\dots,(i_{k},a_{k})}(\eta_{1},\eta_{2},\dots,\eta_{n})=(s_{1},s_{2},\dots,s_{n}),
\]
then for $j \in \left\{ 1,2,\dots,n \right\}$, it follows that
\[
s_{j} = \begin{cases}
a_{l}, \qquad & j=i_{l} \textup{ for some } l\in \left\{ 1,2,\dots,k \right\}, \\
\eta_{j}, \qquad &\textup{otherwise}.
\end{cases}
\]
{Thus, for instance, in the case $n=4$, we have that
\[
C_{(1,0),(3,1)}(\eta_1,\eta_2,\eta_3,\eta_4)=(0,\eta_2,1,\eta_4),
\]
for any tuple $(\eta_1,\eta_2,\eta_3,\eta_4) \in \{0,1\}^4$.
}

By $\textup{co}(v_1,v_2,\dots,v_n)$ we denote the convex hull of points $v_1,v_2,\dots,v_n \in \mathbbm{R}^2$.
\subsection{Gaussian functions and Fourier analysis}\label{sec: Gaussians and Fourier analysis}
We use the following normalization of the Fourier transform for an integrable function $f$:
\[
\widehat{f}(\xi):=\int_{\mathbbm{R}^2}f(x)e^{-2 \pi i x \cdot \xi} \,\textup{d}x.
\]
It is a well-known fact that under this normalization, if we assume that $f$ belongs to the Schwartz class $\mathcal{S}(\mathbbm{R}^2)$, then the Fourier reconstruction formula holds pointwise, that is to say
\[
f(x) = \int_{\mathbbm{R}^2} \widehat{f}(\xi)e^{2 \pi i x\cdot \xi} \,\textup{d}\xi,
\]
for all $x \in \mathbbm{R}^2$. Similarly, for a Borel measure $\mu$, we use the following normalization of the Fourier transform
\[
\widehat{\mu}(\xi):=\int_{\mathbbm{R}^2}e^{-2 \pi i x \cdot \xi} \,\textup{d}\mu(x).
\]For $\lambda>0$, an integrable function $f$ and a Borel measure $\mu$, we denote their respective dilations by
\[
f_{\lambda}(x):=\lambda^{-2}f(\lambda^{-1}x), \quad\mu_{\lambda}(E):=\mu(\lambda^{-1}E)
\]
and we define their convolution as
\[
(\mu \ast f)(x):=\int_{\mathbbm{R}^2} f(x-y) \,\textup{d}\mu(y).
\]
By $\mathbbm{g}$ we denote the standard Gaussian on $\mathbbm{R}^2$ and by $\mathbbm{k}$ we denote its Laplacian, while for $i=1,2$, by $\mathbbm{h}^{(i)}$ we denote its $i$-th partial derivative. That is to say
\[
\mathbbm{g}(x):=e^{-2\pi \lvert x \rvert^2}, \quad \mathbbm{h}^{(i)}(x):=\partial_i\mathbbm{g}(x),\quad\mathbbm{k}(x):=\Delta\mathbbm{g}(x).
\]
The key properties of the Gaussian functions used in the proofs below are the following Gaussian convolution identities, which appear in \citep[Section~5]{DK22}. Let $\alpha,\beta >0$, then
\begin{align} 
\mathbbm{g}_{\alpha} \ast \mathbbm{g}_{\beta} &= \mathbbm{g}_{\sqrt{ \alpha^{2}+ \beta^2 }}\label{eq: Convolution of two Gaussians},\\ 
\sum_{l=1}^2 \mathbbm{h}^{(l)}_{\alpha} \ast \mathbbm{h}^{(l)}_{\beta} &=\frac{\alpha \beta}{\alpha^2 + \beta ^2} \mathbbm{k}_{\sqrt{\alpha^2 + \beta^2}} , \label{eq: Convolution of partial derivatives of Gaussians}  \\ \mathbbm{k}_{\alpha} \ast \mathbbm{g}_{\beta} &=\frac{\alpha^2}{\alpha^2 + \beta^2} \mathbbm{k}_{\sqrt{\alpha^2 + \beta^2}}.\label{eq: Convolution of a gaussian Laplacian and a gaussian}
\end{align}
The identities follow by passing to the Fourier side and performing a simple calculation. We also make use of the following inequalities, which can be found in \citep[Section~5]{DK22}, as well. { For $t,s,r,\lambda>0$ which satisfy  $t\lambda \sim r \sim s$ and $t\lambda \geq r$, $1 \geq t\geq \varepsilon$, for some $\varepsilon>0$}, it follows that
\begin{align}
(\mu_{t}\ast \mathbbm{g}_{t\lambda})(x) &\lesssim \varepsilon^{-3}\int_{1}^{\infty} \mathbbm{g}_{s\gamma}(x)\, \frac{\textup{d}\gamma}{\gamma^2},\label{eq: Domination of Gaussian with t lambda} \\
(\mu_{t}\ast \mathbbm{g}_{r})(x) &\lesssim \varepsilon^{-3}\int_{1}^{\infty} \mathbbm{g}_{s\gamma}(x) \,\frac{\textup{d}\gamma}{\gamma^2}\label{eq: Domination of Gaussian with r},
\end{align}
where $\mu$ denotes a Borel probability measure. By direct calculation, we check that the reparametrized heat equation is satisfied by the Gaussian functions, i.e., it holds that
\begin{equation}\label{eq:heateq}
\frac{\partial}{\partial t} \mathbbm{g}_t(x) = \frac{1}{2\pi t} \mathbbm{k}_t(x).
\end{equation}
The following elementary lemma will play a crucial role in the proof of Theorem \ref{thm: two set Szekely qualitative}. The proof follows by combining an elementary dyadic decomposition of the positive real numbers and the trivial estimate for tails of Schwartz functions.
\begin{lem}[Essential disjointness of Gaussian functions]\label{lem: Smooth disjointness estimate}
Let $J \in \mathbbm{N}$ and let {$\zeta_1, \zeta_2, \dots, \zeta_J \in \mathbbm{R}^2$} be such that for all $j \in \{1,2,\dots,J-1\}$
\[
{2 \lvert \zeta_j \rvert \leq \lvert \zeta_{j+1} \rvert}.
\]
Then, the following bound, which we stress is uniform in the number of variables, holds
\[
\sum_{j=1}^J \lvert \widehat{\mathbbm{k}}({\zeta_j})\rvert \lesssim 1.
\]
\end{lem}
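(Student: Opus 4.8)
The plan is to recall the key property of the Gaussian's Laplacian on the Fourier side and then exploit the geometric (superexponential) growth of the moduli $\lvert\eta_j\rvert$. Passing to the Fourier side, $\widehat{\mathbbm{k}}(\xi) = \widehat{\Delta\mathbbm{g}}(\xi) = -4\pi^2\lvert\xi\rvert^2\,\widehat{\mathbbm{g}}(\xi)$, and since $\mathbbm{g}$ is the standard Gaussian it is (up to normalization) its own Fourier transform, so $\widehat{\mathbbm{k}}$ is, up to constants, of the form $\lvert\xi\rvert^2 e^{-c\lvert\xi\rvert^2}$ for some $c>0$. In particular $\widehat{\mathbbm{k}}$ is a Schwartz function, so for any fixed $N$ (say $N=2$ suffices) we have the trivial tail bound $\lvert\widehat{\mathbbm{k}}(\eta)\rvert \lesssim_N (1+\lvert\eta\rvert)^{-N}$, with a constant depending only on $N$ and not on anything else — this is where uniformity in the number of variables $J$ comes from.

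Next I would split the index set $\{1,\dots,J\}$ according to whether $\lvert\eta_j\rvert \le 1$ or $\lvert\eta_j\rvert > 1$. For the indices with $\lvert\eta_j\rvert \le 1$ there can be at most one of them: the doubling hypothesis $2\lvert\eta_j\rvert \le \lvert\eta_{j+1}\rvert$ forces the moduli to strictly increase (indeed at least double) once they are positive, and more carefully, if two distinct indices had modulus $\le 1$ we would already get a contradiction unless one of the $\eta_j$ vanishes, in which case only the very first term can be small; in any case this block contributes $\lesssim \lVert\widehat{\mathbbm{k}}\rVert_\infty \lesssim 1$. (If one wants to be completely careful, note $\lvert\widehat{\mathbbm{k}}\rvert$ is bounded, so even two or three small terms are harmless — the point is just that there are $O(1)$ of them, which is immediate from doubling.)

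For the indices with $\lvert\eta_j\rvert > 1$, the doubling condition gives $\lvert\eta_j\rvert \ge 2^{j-j_0}$ for a suitable offset $j_0$, so these moduli dominate a geometric sequence $1, 2, 4, 8, \dots$. Applying the tail bound with $N=2$,
\[
\sum_{j:\,\lvert\eta_j\rvert>1} \lvert\widehat{\mathbbm{k}}(\eta_j)\rvert \;\lesssim\; \sum_{j:\,\lvert\eta_j\rvert>1} \lvert\eta_j\rvert^{-2} \;\lesssim\; \sum_{k\ge 0} 2^{-2k} \;\lesssim\; 1,
\]
and combining with the $O(1)$ contribution from the small-modulus block finishes the proof. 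The only mildly delicate point — and the one I would write out carefully — is the bookkeeping that extracts a genuine geometric lower bound $\lvert\eta_j\rvert \gtrsim 2^{j}$ (after discarding an $O(1)$ initial segment) purely from the pairwise doubling hypothesis, together with the observation that this is exactly what makes the final estimate independent of $J$; everything else is the standard Schwartz tail bound.
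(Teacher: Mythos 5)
There is a genuine gap. Your treatment of the large-modulus block is fine, but your treatment of the small-modulus block is wrong, and it is wrong in a way that touches the very reason the lemma is true.

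You claim that the doubling condition forces there to be only $O(1)$ indices with $\lvert\eta_j\rvert\le 1$. That is false: the doubling condition bounds neighbouring moduli from \emph{above} by a factor $1/2$ as you go backwards, so you can perfectly well have a long run of nonzero frequencies creeping up towards $1$ from below. For instance $\eta_j=(2^{-(J-j)},0)$ for $j=1,\dots,J$ satisfies $2\lvert\eta_j\rvert=\lvert\eta_{j+1}\rvert$ exactly, and \emph{all} $J$ of these have modulus $\le 1$. With only the bound $\lvert\widehat{\mathbbm{k}}\rvert\le\lVert\widehat{\mathbbm{k}}\rVert_\infty$ on this block, the best you get is a contribution of order $J$, which is exactly what the lemma forbids. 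Indeed, the lemma would simply be \emph{false} if $\widehat{\mathbbm{k}}$ were replaced by $\widehat{\mathbbm{g}}$: with the same lacunary sequence $\eta_j=(2^{-(J-j)},0)$, each $\widehat{\mathbbm{g}}(\eta_j)$ is bounded below by $\widehat{\mathbbm{g}}(1)>0$, so the sum grows linearly in $J$.

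What actually saves the lemma is the very factor $\lvert\xi\rvert^2$ you wrote down, $\widehat{\mathbbm{k}}(\xi)=-4\pi^2\lvert\xi\rvert^2\widehat{\mathbbm{g}}(\xi)$, which gives the two-sided bound $\lvert\widehat{\mathbbm{k}}(\eta)\rvert\lesssim\min\bigl(\lvert\eta\rvert^2,\lvert\eta\rvert^{-2}\bigr)$: second-order vanishing at the origin, Schwartz decay at infinity. Then, letting $j_0$ be the last index with $\lvert\eta_{j_0}\rvert\le 1$, the doubling condition gives $\lvert\eta_j\rvert\le 2^{-(j_0-j)}$ for $j\le j_0$ and $\lvert\eta_j\rvert\ge 2^{j-j_0-1}$ for $j>j_0$, so both halves of the sum are dominated by a geometric series in $2^{-2}$. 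This is the dyadic decomposition the paper is alluding to, and the vanishing of $\widehat{\mathbbm{k}}$ at the origin is the indispensable ingredient that you did not invoke.
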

\begin{lem}[Fourier dimension estimate]\label{lem: Fourier dimension estimate}
Let $\mu$ be a probability measure supported on a closed, smooth curve of non-vanishing curvature, which is the boundary of some compact and convex set in the plane. Then, the Fourier transform of $\mu$ satisfies
\begin{itemize}
    \item [(a)] $\sup_{\xi \in \mathbbm{R}^2} \lvert \xi \rvert^{\frac{1}{2}} \lvert \widehat{\mu}(\xi) \rvert< +\infty.$
    \item [(b)] $\big \vert  \widehat{\mu}(\lambda \xi) \widehat{\mathbbm{k}}(t \lambda \xi)\big \vert \lesssim t^{\frac{1}{2}}$, for all $\xi \in \mathbbm{R}^2$, and all $t, \lambda>0$.
\end{itemize}   
\end{lem}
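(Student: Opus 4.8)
The plan is to recognize part (a) as the classical $|\xi|^{-1/2}$ decay of the Fourier transform of a measure carried by a planar curve of non-vanishing curvature, which one proves by stationary phase, and then to derive part (b) as an essentially formal consequence of (a) together with elementary pointwise bounds on $\widehat{\mathbbm{k}}$; part (b) is the form in which this estimate will actually be fed into the singular Brascamp--Lieb bound later on.

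For part (a): since the curve bounds a compact convex region with non-empty interior and has non-vanishing (hence, for the counterclockwise orientation, strictly positive) curvature, it is strictly convex and its Gauss map is a diffeomorphism onto $S^1$; I would therefore parametrize the curve by the inverse Gauss map, $\theta \mapsto \gamma(\theta)$ from $\mathbbm{R}/2\pi\mathbbm{Z}$ onto the curve, so that $\gamma'(\theta) = \rho(\theta)(-\sin\theta,\cos\theta)$ where the radius of curvature $\rho$ is $C^\infty$ and satisfies $0 < \rho_{\min} \le \rho(\theta) \le \rho_{\max}$ by compactness. Writing $\textup{d}\mu = \psi\,\textup{d}\theta$ for the resulting density $\psi$ (which for the normalized arclength measure, the case relevant to us, is a constant multiple of $\rho$, hence smooth; in general I assume $\psi$ is of bounded variation), one has for $\xi = |\xi|(\cos\alpha,\sin\alpha) \ne 0$
\[ \widehat{\mu}(\xi) = \int_0^{2\pi} e^{-2\pi i |\xi|\,\phi(\theta)}\,\psi(\theta)\,\textup{d}\theta, \qquad \phi(\theta) := \gamma(\theta)\cdot(\cos\alpha,\sin\alpha), \]
and a direct computation gives $\phi'(\theta) = \rho(\theta)\sin(\alpha-\theta)$. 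Thus $\phi$ has exactly the two critical points $\theta = \alpha$ and $\theta = \alpha+\pi$, and at each of them $|\phi''|$ equals the radius of curvature of the curve there, hence is $\ge \rho_{\min}$; by continuity there is a $\delta_0 > 0$, uniform in $\alpha$ (depending only on $\rho_{\min}$, $\|\rho\|_\infty$, $\|\rho'\|_\infty$), such that $|\phi''| \ge \rho_{\min}/2$ on the two arcs $\{|\theta-\alpha| \le \delta_0\}$ and $\{|\theta-\alpha-\pi| \le \delta_0\}$, while $|\phi'| \ge \rho_{\min}\sin\delta_0 > 0$ on the two complementary arcs. On the first pair of arcs van der Corput's second-derivative lemma (the phase $-2\pi|\xi|\phi$ having second derivative of modulus $\ge \pi\rho_{\min}|\xi|$) contributes $\lesssim |\xi|^{-1/2}$, and on the complementary arcs a single integration by parts contributes $\lesssim |\xi|^{-1}$, with all constants depending only on $\Gamma$ and $\mu$. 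Hence $|\widehat{\mu}(\xi)| \lesssim |\xi|^{-1/2}$ for $|\xi| \ge 1$, while trivially $|\widehat{\mu}(\xi)| \le \mu(\mathbbm{R}^2) = 1$ gives $|\xi|^{1/2}|\widehat{\mu}(\xi)| \le 1$ for $0 < |\xi| \le 1$; together these prove (a).

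For part (b): since $\mathbbm{k} = \Delta\mathbbm{g}$ we have $\widehat{\mathbbm{k}}(\eta) = -4\pi^2|\eta|^2\,\widehat{\mathbbm{g}}(\eta)$, and $\widehat{\mathbbm{g}}$ is a Gaussian, so $|\widehat{\mathbbm{k}}(\eta)| \lesssim |\eta|^2 e^{-c|\eta|^2}$ for some $c>0$; in particular $|\widehat{\mathbbm{k}}(\eta)| \lesssim \min\{|\eta|^2,1\} \le |\eta|^{1/2}$ for every $\eta \in \mathbbm{R}^2$. If $\xi = 0$ then $\widehat{\mathbbm{k}}(0) = 0$ and the bound is trivial; if $\xi \ne 0$, then applying (a) to the vector $\lambda\xi$ and the previous bound to $\eta = t\lambda\xi$, and using $|\lambda\xi| = \lambda|\xi|$, we get
\[ \bigl|\widehat{\mu}(\lambda\xi)\,\widehat{\mathbbm{k}}(t\lambda\xi)\bigr| \lesssim (\lambda|\xi|)^{-1/2}\,(t\lambda|\xi|)^{1/2} = t^{1/2}, \]
which is exactly (b).

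I expect the only genuinely non-trivial point to be the uniformity over the direction $\alpha$ of the oscillatory-integral estimates behind (a); this is handled by the compactness of the circle of directions together with the two-sided bounds on the radius of curvature, both coming from $\Gamma$ being smooth, compact and of non-vanishing curvature. The remaining ingredients — the computation of $\widehat{\mathbbm{k}}$, the elementary inequality $\min\{|\eta|^2,1\} \le |\eta|^{1/2}$, and the bookkeeping of the various constants — are routine.
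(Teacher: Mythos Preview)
Your proposal is correct and matches the paper's own treatment: the paper does not prove (a) but cites Stein's \emph{Harmonic Analysis} (Theorem~1, Section~8.3) for the standard stationary-phase/van der Corput argument you sketch, and it says (b) ``follows quite easily from (a)'', referring to \cite{K20:anisotrop} and \cite{KP2023} for the routine computation you carry out, namely combining $|\widehat{\mu}(\lambda\xi)|\lesssim(\lambda|\xi|)^{-1/2}$ with $|\widehat{\mathbbm{k}}(\eta)|\lesssim|\eta|^{1/2}$.
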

 The proof of (a) is quite standard and can be found as Theorem 1 in Section 8.3 in \cite{St93:book}, while (b) follows quite easily from (a); the interested reader may consult \cite{K20:anisotrop} or \cite{KP2023} for details. One measure that satisfies the conditions of the above lemma, and which we make frequent use of, is the uniform probability measure supported on the unit circle centered at the origin. We denote this measure by $\sigma$.

Lastly, we recall some basic tools from additive combinatorics. Let $f\colon\mathbbm{R}^2\to[0,1]$ be a compactly supported measurable function. For $n \in \mathbbm{N}$, we denote
\[ \mathcal{F}_n(x;v_1,\ldots,v_n) := \prod_{\eta\in\{0,1\}^n} f(x + \eta_1v_1 + \eta_2v_2+\dots+\eta_nv_n), \]
for $x,v_1,\ldots,v_n\in\mathbbm{R}^2$. These products are the basic building blocks of the so-called \emph{Gowers uniformity norms} $\Vert \cdot\Vert_{\textup{U}^n}$, which are defined as
\begin{equation}\label{eq:Gowersdef}
\Vert f\Vert_{\textup{U}^n(\mathbbm{R}^2)}:=\Big(\int_{(\mathbbm{R}^2)^{n+1}} \mathcal{F}_n(x;v_1,\ldots,v_n) \,\textup{d}x \,\textup{d}v_1 \cdots \textup{d}v_n\Big)^{\frac{1}{2^n}} .
\end{equation}
Uniformity norms are usually studied in the context of locally-compact Abelian groups and in this general setting, one can prove the so called Gowers--Cauchy--Schwarz inequality; the interested reader might look at \cite{Gow01,HK05} or \cite{ET12}. For our purposes, the following special case, the proof of which follows from inductively applying the Cauchy--Schwarz inequality in $\mathbbm{R}^2$, will suffice:
\begin{equation}\label{eq: GCS}
 \bigg\vert \int_{(\mathbbm{R}^2)^{n+1}} \prod_{\eta\in\{0,1\}^n} f_{\eta}(x + \eta \cdot v) \,\textup{d}x \,\textup{d}v_1 \cdots \textup{d}v_n \bigg\vert 
\leq \prod_{\eta\in\{0,1\}^n} \Vert f_{\eta}\Vert_{\textup{U}^n(\mathbbm{R}^2)},
\end{equation}
where $(f_{\eta})_{\eta \in \{0,1\}^n}$ are bounded, real-valued and compactly supported measurable functions.
By testing the above inequality with suitably chosen indicator functions of cubes (see \cite{KP2023} for more elaboration), we can conclude that
\begin{equation}\label{eq:GCScorr}
\Vert f\Vert_{\textup{U}^n(\mathbbm{R}^2)} \gtrsim_{n} \vert Q\vert ^{-1+(n+1)/2^n} \int_Q f(x)\,\textup{d}x,
\end{equation}
where $f$ is a positive function, supported on the cube $Q$.
\subsection{Geometry of curves in the plane}\label{sec: Geometry of general curves in the plane}
Throughout this section, let $\Gamma$ be a closed, smooth, centrally symmetric curve with non-vanishing curvature, which is the boundary of some compact and convex planar set with non-empty interior. Let $\mu$ be the {normalized (uniform)} arclength measure supported on $\Gamma$.
The following proposition is geometrically quite self-evident, however a rigorous proof requires some theory. The standard reference is a book by do Carmo \cite{doCarmo2016}, and the interested reader might consult Sections 1--7; chiefly Exercise 9\@ and 5--7; chiefly Proposition 1.
\begin{prop}\label{prop: Supporting line}
   { Let $\Gamma$ be a closed, smooth planar curve { of non-vanishing curvature}, which is the boundary of some compact set $C$ with a non-empty interior. Then, $C$ is convex if and only if, the so-called, ``supporting line'' property holds. Namely, for every point $a \in \Gamma$, there exists a unique vector $n_a\in\mathbbm{R}^2$ such that for all $b \in \Gamma \setminus \{a\}$, it follows that
    \[
    n_a \cdot (b-a)< 0.
    \]
    Geometrically, the supporting line property hold if the entirety of the curve $\Gamma$, except for point $a$, is entirely contained in one of the two open half-planes determined by the line $\ell_a:=a+\{n_a\}^\perp$.}
\end{prop}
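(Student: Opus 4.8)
The plan is to prove both implications of the equivalence directly, leaning on the characterization of convexity for smooth simple closed curves via the sign of the curvature (equivalently, via the tangent line lying locally on one side of the curve), which is the content of the cited exercises in do Carmo. Throughout, let $\gamma\colon\mathbbm{R}/L\mathbbm{Z}\to\mathbbm{R}^2$ be an arclength parametrization of $\Gamma$, oriented so that the inward unit normal $N(s)$ points into $C$, and recall that for a convex curve the signed curvature $\kappa(s)$ does not change sign; we may take $\kappa(s)\ge 0$.

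\emph{Supporting line $\Longrightarrow$ convex.} Assume the supporting line property. I would show $C=\bigcap_{a\in\Gamma}\{x : n_a\cdot(x-a)\le 0\}$, an intersection of closed half-planes, hence convex. One inclusion is immediate: if $x\in C$ then $x$ cannot lie in the open half-plane $n_a\cdot(x-a)>0$, since that half-plane meets $\Gamma=\partial C$ only possibly at... — here I would argue that a point of $C$ on the ``wrong'' side of $\ell_a$ together with the fact that $\Gamma\setminus\{a\}$ lies strictly on the other side forces, by connectedness of $C$ and the Jordan curve theorem (the bounded component of $\mathbbm{R}^2\setminus\Gamma$ has boundary exactly $\Gamma$), a contradiction: the segment from such a point to a point of $C$ near $a$ on the correct side must cross $\ell_a$, but all of $\partial C$ other than $a$ avoids $\ell_a$. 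For the reverse inclusion, if $x\notin C$ then $x$ lies in the unbounded component, and one produces a separating supporting line at the point $a\in\Gamma$ closest to $x$ (the nearest-point projection onto the compact set $C$), checking that $n_a$ is the outward normal there; uniqueness of $n_a$ comes from smoothness of $\Gamma$ (the tangent line at $a$ is unique, so the only candidate half-plane boundary through $a$ is $\ell_a$).

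\emph{Convex $\Longrightarrow$ supporting line.} Assume $C$ convex. For $a=\gamma(s_0)$, set $n_a$ to be the outward unit normal $-N(s_0)$, so $\ell_a$ is the tangent line to $\Gamma$ at $a$. I would prove $n_a\cdot(\gamma(s)-\gamma(s_0))<0$ for all $s\ne s_0$ by considering the height function $\varphi(s):=n_a\cdot(\gamma(s)-\gamma(s_0))$. We have $\varphi(s_0)=0$, $\varphi'(s_0)=n_a\cdot\gamma'(s_0)=0$ (tangency), and $\varphi''(s)=n_a\cdot\gamma''(s)=n_a\cdot\kappa(s)N(s)=-\kappa(s)\le 0$, where the sign is controlled by the non-vanishing-curvature-with-fixed-sign property of convex curves. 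Hence $\varphi$ is concave, and being $0$ with zero derivative at $s_0$, it satisfies $\varphi\le 0$ everywhere. To upgrade $\le$ to strict $<$ away from $s_0$: if $\varphi(s_1)=0$ for some $s_1\ne s_0$, concavity forces $\varphi\equiv 0$ on the arc between, so $\Gamma$ contains a line segment, contradicting non-vanishing curvature ($\kappa>0$ strictly on that arc would be violated). Uniqueness of $n_a$: any vector with the stated strict-half-plane property must have $\{n_a\}^\perp$ equal to the tangent line at $a$ (otherwise points of $\Gamma$ on both sides appear by a first-order Taylor expansion), pinning down $n_a$ up to sign, and the sign is forced by requiring $\Gamma\setminus\{a\}$ on the negative side.

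The main obstacle I anticipate is not any single computation but making the global topological step in the first implication fully rigorous — namely passing from the local/infinitesimal supporting-line data to the global convexity of $C$ — while keeping the argument short; the cleanest route is probably the nearest-point-projection argument for $x\notin C$ combined with the half-space intersection identity, invoking the Jordan curve theorem to identify $\partial C$ with $\Gamma$. Everything else reduces to the one-variable concavity analysis of the height function $\varphi$ and standard facts about plane curves, for which I would simply cite do Carmo rather than reprove.
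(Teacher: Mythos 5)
The paper itself gives no proof of Proposition~\ref{prop: Supporting line}; it only cites do Carmo \cite{doCarmo2016}, so there is no in-paper argument to compare your attempt against. Judged on its own, your outline of the ``supporting line $\Rightarrow$ convex'' direction is reasonable (modulo the topological details you already flag), but the ``convex $\Rightarrow$ supporting line'' direction contains a genuine error. You claim $\varphi''(s)=n_a\cdot\gamma''(s)=n_a\cdot\kappa(s)N(s)=-\kappa(s)\le 0$ for all $s$. This identity uses $n_a\cdot N(s)=-1$, which holds only at $s=s_0$: you fixed $n_a=-N(s_0)$ while $N(s)$ rotates as $s$ varies, so $n_a\cdot N(s)$ sweeps through $[-1,1]$ and $\varphi''>0$ on the arc where $n_a\cdot N(s)>0$. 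More fundamentally, a non-constant $L$-periodic function cannot be concave: $\int_0^L\varphi''\,\textup{d}s=\varphi'(L)-\varphi'(0)=0$, so $\varphi''\le 0$ would force $\varphi''\equiv 0$ and $\varphi$ constant, which is absurd for a height function on a curve bounding a region of non-empty interior. Thus the ``$\varphi$ is concave, has a max with zero derivative at $s_0$, hence $\varphi\le 0$'' step collapses.

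A correct route is to drop global concavity and argue via the critical-point structure of $\varphi$. For a smooth strictly convex curve the Gauss map $s\mapsto N(s)$ is a bijection onto the circle, so $\varphi'(s)=n_a\cdot\gamma'(s)=0$ occurs at exactly two parameters, $s_0$ (where $N(s_0)=-n_a$, giving $\varphi''(s_0)=-\kappa(s_0)<0$, a strict local max) and the antipodal $s_1$ (where $N(s_1)=n_a$, giving a strict local min). With only one local max, $s_0$ is the global max, and strictness follows. Alternatively invoke the supporting hyperplane theorem for compact convex sets directly and use smoothness to identify the supporting line with $\ell_a$; strict convexity (equivalently, no line segment in $\Gamma$) then upgrades the weak to the strict inequality. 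Separately, note that the proposition's own hypotheses do not include non-vanishing curvature, yet strict inequality fails if $\Gamma$ contains a segment; you correctly reach for the curvature hypothesis, but should flag that it is a section-wide standing assumption that the proposition statement silently relies on.
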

\begin{lem}\label{lem: Curves and lines intersect at 2 points}
Every line $l$ in $\mathbbm{R}^2$ intersects $\Gamma$ in at most two points.
\end{lem}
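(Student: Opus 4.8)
The plan is to argue by contradiction using the supporting line property recorded in Proposition \ref{prop: Supporting line}. Since $\Gamma$ bounds a compact convex set with non-empty interior, that property is available to us. So suppose, for contradiction, that some line $l$ meets $\Gamma$ in at least three distinct points. Fixing a direction vector $w$ of $l$ and parametrizing $l$ affinely, three distinct collinear points always admit a labeling $p_1, p_2, p_3 \in \Gamma \cap l$ in which $p_2$ lies strictly between $p_1$ and $p_3$. Concretely, writing $p_1 = p_2 + s_1 w$ and $p_3 = p_2 + s_3 w$, the scalars $s_1, s_3$ are nonzero and of opposite sign, whence $p_3 - p_2 = \kappa (p_1 - p_2)$ with $\kappa := s_3/s_1 < 0$.

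Now apply Proposition \ref{prop: Supporting line} at the point $a = p_2 \in \Gamma$: there is a vector $n_{p_2}$ with $n_{p_2} \cdot (b - p_2) < 0$ for every $b \in \Gamma \setminus \{p_2\}$. Taking $b = p_1$ and then $b = p_3$ yields $n_{p_2} \cdot (p_1 - p_2) < 0$ and $n_{p_2} \cdot (p_3 - p_2) < 0$. On the other hand, using $p_3 - p_2 = \kappa(p_1 - p_2)$ we get $n_{p_2} \cdot (p_3 - p_2) = \kappa\, n_{p_2} \cdot (p_1 - p_2)$, and since $\kappa < 0$ while $n_{p_2} \cdot (p_1 - p_2) < 0$, this quantity is strictly positive — contradicting the previous line. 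Therefore $l$ meets $\Gamma$ in at most two points.

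The only mildly delicate point to state carefully is the elementary observation that among three distinct points on a line one is strictly between the other two, and that this betweenness forces the two difference vectors $p_1 - p_2$ and $p_3 - p_2$ to be negative scalar multiples of each other; once this is in hand, the conclusion is an immediate consequence of Proposition \ref{prop: Supporting line}. No use is made of smoothness, central symmetry, or non-vanishing curvature beyond what is already encoded in that proposition, so I do not anticipate any genuine obstacle. (One could alternatively note that $C \cap l$ is a convex subset of $l$, hence an interval, and that strict convexity coming from non-vanishing curvature rules out a segment of $\Gamma$ lying on $l$; but the supporting-line argument above is shorter and entirely self-contained.)
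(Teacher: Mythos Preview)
Your proof is correct and is essentially the same as the paper's: both assume three collinear points on $\Gamma$, apply the supporting line property of Proposition~\ref{prop: Supporting line} at the middle point, and derive a sign contradiction from the strict inequality $n\cdot(b-a)<0$. The only cosmetic difference is that the paper writes the middle point as a convex combination $z=tx+(1-t)y$ and sums the two inequalities, whereas you express $p_3-p_2$ as a negative multiple of $p_1-p_2$; the underlying idea is identical.
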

\begin{proof}
    Assume that $l$ intersects $\Gamma$ in three mutually distinct points $x,y$ and $z$. Without loss of generality, we can assume that there exists some $t\in(0,1)$ such that $z=tx+(1-t)y.$ Since $z$ belongs to $\Gamma$, Proposition \ref{prop: Supporting line} implies that there exists some $n \in \mathbbm{R}^2$ such that for all $b\in \Gamma \setminus \left\{ z \right\}$, it follows that $n \cdot (b-z) <0$. In particular, this leads to a contradiction:
    \[0=n \cdot((tx+(1-t)y) - z)=t(n \cdot (x-z))+(1-t)(n \cdot(y-z))<0.\qedhere\]
\end{proof}
\begin{lem}\label{lem: Critical convexity}
Let $a_{1},a_{2},a_{3},b_{1},b_{2},b_{3} \in \mathbbm{R}^2$ be mutually distinct points which satisfy
\[
b_{1}-a_{1}=b_{2}-a_{2}=b_{3}-a_{3}.
\]
Then one of these points lies in the convex hull spanned by some three of the remaining points.
\end{lem}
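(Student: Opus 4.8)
The plan is to exploit the translation structure $b_i = a_i + w$ for a fixed vector $w = b_1 - a_1 = b_2 - a_2 = b_3 - a_3$. The six points are then $a_1, a_2, a_3, a_1+w, a_2+w, a_3+w$, i.e.\ two congruent triangles (or degenerate triangles) offset by $w$. First I would dispose of degenerate cases: if the three points $a_1, a_2, a_3$ are collinear, then all six points lie on the union of two parallel lines (or a single line if $w$ is parallel to that line), and an elementary ordering argument along these lines produces one of the points inside a segment spanned by two others (recall that by convention a point of a segment lies in the convex hull of the two endpoints, so a ``triangle'' degenerating to a segment still counts). Likewise if $w = 0$ the points are not mutually distinct, so we may assume $w \neq 0$ and $a_1, a_2, a_3$ affinely independent.

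So assume $T := \mathrm{co}(a_1, a_2, a_3)$ is a genuine triangle with nonzero area, and let $T' = T + w$ be its translate. The key observation is that $T$ and $T'$ are congruent convex sets, so neither can be strictly contained in the other; hence the boundary $\partial T'$ must meet $T$, and in fact one checks that at least one vertex of $T'$ lies in $T$ or one vertex of $T$ lies in $T'$. To see this cleanly, consider the linear functional $\ell(x) = w \cdot x$: among $a_1, a_2, a_3$ pick the index $i_0$ maximizing $\ell$ and the index $i_1$ minimizing $\ell$. Then $a_{i_1} + w = b_{i_1}$ satisfies $\ell(b_{i_1}) = \ell(a_{i_1}) + |w|^2$, and I would argue that $b_{i_1}$ lies in the slab $\{\,\ell(a_{i_1}) \le \ell(x) \le \ell(a_{i_0})\,\}$ — this needs $|w|^2 \le \ell(a_{i_0}) - \ell(a_{i_1})$, which may fail, so the functional $\ell$ alone is not enough and one must instead compare $T$ and $T'$ directly.

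The cleanest route: since $T$ and $T'$ are translates, $T \cap T'$ is a convex polygon, and $T \not\subseteq T'$, $T' \not\subseteq T$ (equal area, distinct sets). A translate of a triangle that is neither disjoint from nor contains/contained-in the original must, by a short case analysis on how the two triangles overlap, have a vertex of one lying inside the closed region bounded by the other. Concretely, consider the vertex $v$ of $T'$ that is a translate of the vertex of $T$ "pointing most in the direction $w$" versus the one "pointing most opposite to $w$"; pushing $T$ by $w$ moves the $-w$-extreme vertex toward the interior while the $+w$-extreme vertex escapes, and since the displacement is by the fixed vector $w$ one vertex necessarily enters $T$ unless the triangles become disjoint — but they cannot become disjoint while one vertex of $T$ stays put relative to the other under a single shared translation, because then $a_{i}$ and $b_{i} = a_i + w$ would have to be separated, contradicting that all of $a_1,a_2,a_3,b_1,b_2,b_3$ interleave. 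Once some $b_i \in T = \mathrm{co}(a_1,a_2,a_3)$, or symmetrically some $a_i \in T' = \mathrm{co}(b_1,b_2,b_3)$, the conclusion follows immediately (if the containing point coincides with a vertex we are still fine by mutual distinctness forcing it into the interior or onto an edge, both of which put it in the relevant convex hull).

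The main obstacle I anticipate is making the overlap case-analysis fully rigorous and genuinely exhaustive: one must verify that a triangle and a nontrivial translate of itself always satisfy the dichotomy ``disjoint interiors, or a vertex of one lies in the other,'' and rule out the disjoint case using the hypothesis $b_i - a_i = w$ for \emph{all three} $i$. I would handle the disjoint case by separating hyperplane: if $T$ and $T'$ have a separating line $L$ with normal $u$, then $u \cdot w$ has a definite sign, say $u \cdot (b_i - a_i) = u\cdot w > 0$ for all $i$, so $T'$ lies strictly on the $+u$ side of $L$ and $T$ on the $-u$ side; but then taking $a_j$ maximizing $u \cdot a_j$ over $T$ we get $u \cdot b_j = u \cdot a_j + u\cdot w > u\cdot a_j$, consistent — the genuine contradiction comes from the fact that $T$ and $T'$ have the same diameter and the same width in direction $u$, so a single translation cannot separate congruent convex bodies unless $|u\cdot w|$ exceeds that width, and then one of the three segments $[a_i, b_i]$ would poke through $L$ while both endpoints are on opposite sides — impossible since $[a_i,b_i] \subseteq$ the strip between the supporting lines only if the width is large enough. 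I would isolate this as the crux and give the separating-line computation in full, while keeping the rest brief.
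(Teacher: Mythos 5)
Your argument rests entirely on the pair of ``pure'' triangles $T=\textup{co}(a_1,a_2,a_3)$ and $T'=T+w=\textup{co}(b_1,b_2,b_3)$, and on the asserted impossibility of $T$ and $T'$ being disjoint. That assertion is false. Take $a_1=(0,0)$, $a_2=(0.3,1)$, $a_3=(0.3,-1)$ and $w=(1,0)$: then $T$ has horizontal extent $[0,0.3]$ while $T'$ has horizontal extent $[1,1.3]$, so the two triangles are separated by the vertical line $x=0.65$. Nothing in your separating-line discussion produces a genuine contradiction --- the segments $[a_i,b_i]$ \emph{do} all cross the separating line, which is perfectly consistent, and two congruent convex bodies certainly can be separated by a translation once $\lvert w\rvert$ exceeds the width in the translation direction. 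In this example the conclusion of the lemma still holds, but only via a \emph{mixed} triple, e.g.\ $b_1=(1,0)\in\textup{co}(a_3,b_2,b_3)$. Since your scheme never considers triples drawn from both the $a$'s and the $b$'s, the disjoint case (which occurs for all sufficiently long $w$) is left entirely unhandled, so there is a real gap.

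A correct route necessarily uses mixed triples. For instance, rotate so that $w$ points along the positive $x$-axis and order the three horizontal segments $[a_i,b_i]$ by their $y$-coordinates; in the generic case $y_1<y_2<y_3$, compare the middle segment $[a_2,b_2]$ with the horizontal slice at height $y_2$ of the parallelogram $\textup{co}(a_1,b_1,b_3,a_3)$. That slice is a horizontal segment of the same length $\lvert w\rvert$, so either one of $a_2,b_2$ falls into the parallelogram (done, by Carath\'eodory), or $[a_2,b_2]$ lies entirely to one side of it, and then a direct check gives $a_2\in\textup{co}(a_1,a_3,b_2)$ or $b_2\in\textup{co}(b_1,b_3,a_2)$. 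Ties in $y$ reduce to collinear configurations, where an ordering argument finishes. This is, in essence, what the paper's coordinate change and three-case pictorial proof (Figure~\ref{fig:conv5}) carry out; the $T$-versus-$T'$ approach cannot be repaired without abandoning the restriction to the two pure triangles.
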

\begin{proof}
By making an appropriate change of coordinates, the claim reduces to checking three distinct cases which correspond to different arrangements of the given points and which are illustrated in Figure \ref{fig:conv5}.
\begin{figure}
\includegraphics[width=0.9\linewidth]{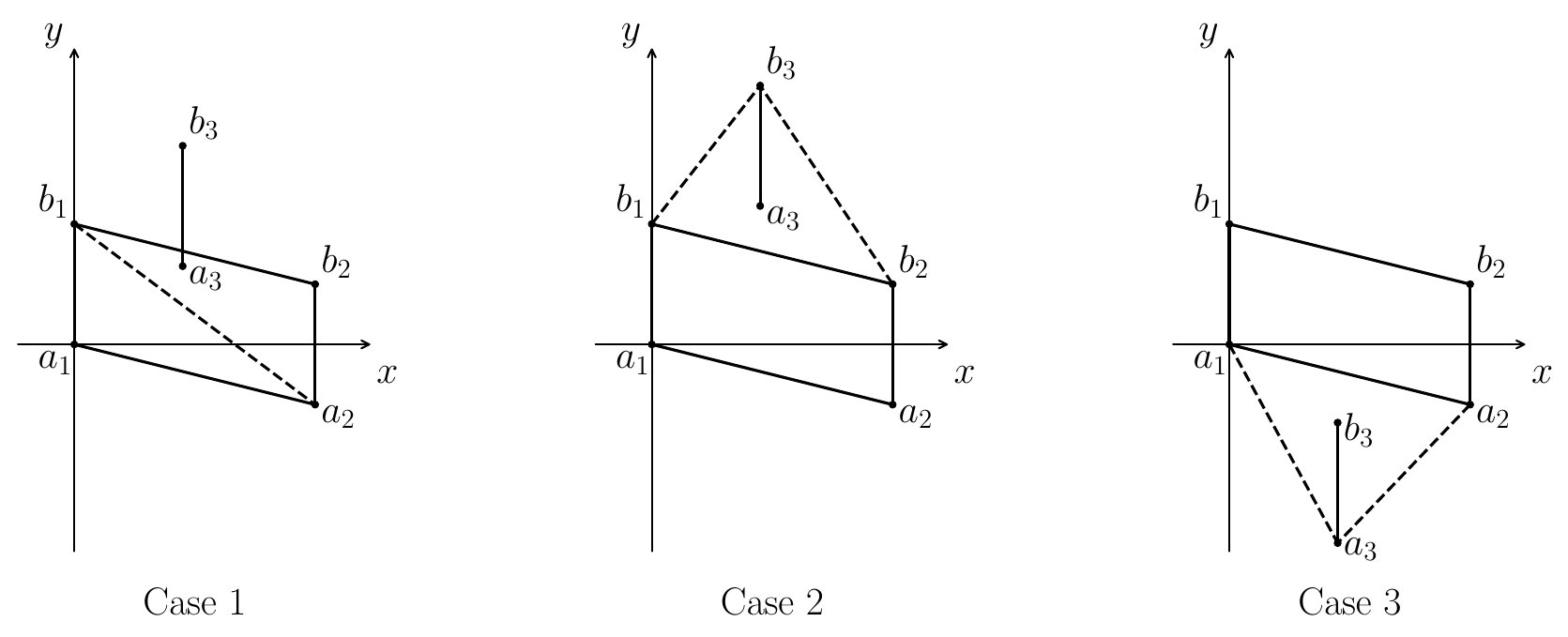}
\centering
\caption{Sketch of proof of Lemma \ref{lem: Critical convexity}}
\label{fig:conv5}
\end{figure}   
\end{proof}

As mentioned in the introduction, the following lemma is crucial for the reduction of Theorem \ref{thm: VC dimension, main theorem} to Theorem \ref{thm: VC dimension, configuration variant qualitative}.
\begin{lem}\label{lem: topological translations along curves}
Let $\Gamma \subseteq \mathbbm{R}^2$ be as above.
\begin{itemize}
    \item[(a)] For any mutually distinct $v_1,v_2 \in \mathbbm{R}^2$, the intersection $(v_1+\Gamma) \cap (v_2 +\Gamma)$ consists of at most two points.
    \item[(b)] For two mutually distinct points $x,y \in \mathbbm{R}^2$, there exist at most two translates of the curve $\Gamma$ which contain both $x$ and $y$.
\end{itemize}
\end{lem}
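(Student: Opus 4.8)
The plan is to reduce both parts to the intersection structure of a curve with its translate, and then bring to bear Proposition~\ref{prop: Supporting line} and Lemma~\ref{lem: Curves and lines intersect at 2 points}.

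\textbf{Part (a).} Set $w=v_2-v_1\neq 0$. A point $p$ lies in $(v_1+\Gamma)\cap(v_2+\Gamma)$ if and only if both $p-v_1\in\Gamma$ and $p-v_2=(p-v_1)-w\in\Gamma$. So, writing $a=p-v_1$, the intersection is in bijection with the set $S=\{a\in\Gamma : a-w\in\Gamma\}$, i.e. pairs of points of $\Gamma$ differing by the fixed vector $w$. I would argue that $|S|\le 2$ using the supporting-line property. Suppose, for contradiction, that there are three distinct $a_1,a_2,a_3\in\Gamma$ with $a_i-w\in\Gamma$ for each $i$; set $b_i=a_i-w$, so $a_i-b_i=w$ for all $i$, and all six points $a_1,a_2,a_3,b_1,b_2,b_3$ lie on $\Gamma$. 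I want to say these six points cannot all be distinct, or if some coincide the situation degenerates; but the cleaner route is Lemma~\ref{lem: Critical convexity}: if the six points were mutually distinct, one of them would lie in the convex hull of three of the others, hence in $\mathrm{Int}(C)$ or on a chord of $C$, contradicting that all six are on the boundary $\Gamma=\partial C$ of the convex body $C$ (a point of $\Gamma$ in the convex hull of other points of $\Gamma$ violates the supporting-line property at that point). One must also dispatch coincidences among the $a_i,b_j$: since the $a_i$ are distinct and the $b_i=a_i-w$ are distinct, the only possible coincidences are $a_i=b_j$ with $i\neq j$, and a short case analysis (using that $w\neq 0$, so $a_i\neq b_i$) together with the collinearity/convexity constraints rules out having three distinct elements of $S$ in that case too. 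Hence $|S|\le 2$, which is exactly the claim.

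\textbf{Part (b).} A translate $v+\Gamma$ contains both $x$ and $y$ iff $x-v\in\Gamma$ and $y-v\in\Gamma$; writing $a=x-v$ and $b=y-v$, this says $a,b\in\Gamma$ with $a-b=x-y=:u$, a fixed nonzero vector. So the set of admissible translation vectors $v=x-a$ is in bijection with $\{a\in\Gamma : a-u\in\Gamma\}$, which is precisely the set $S$ from part (a) with $w$ replaced by $u$. Therefore (b) follows immediately from the bound $|S|\le 2$ established in (a); in fact (a) and (b) are two readings of the same statement ``for any fixed nonzero $w$, at most two points $a\in\Gamma$ satisfy $a-w\in\Gamma$,'' so I would isolate that as the core sublemma and derive both items from it.

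The main obstacle is the convexity argument showing $|S|\le 2$, i.e. ruling out three ``parallel chords of length vector $w$'' with all endpoints on $\Gamma$. The geometric picture is clear—three such segments are parallel translates of each other, and convexity forbids a boundary point from lying inside the hull of other boundary points—but making it airtight requires either invoking Lemma~\ref{lem: Critical convexity} and then carefully handling the degenerate cases where endpoints coincide or become collinear (where one instead appeals to Lemma~\ref{lem: Curves and lines intersect at 2 points}). I do not expect to need central symmetry or non-vanishing curvature of $\Gamma$ for this lemma; convexity and smoothness (to have $\Gamma=\partial C$ with $C$ convex compact) suffice.
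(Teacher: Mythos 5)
Your proof of part (a) is essentially the paper's argument in slightly different clothes: you reduce to bounding $|\Gamma \cap (w+\Gamma)|$ for a fixed nonzero $w$, dispose of the generic case (six distinct points) via Lemma~\ref{lem: Critical convexity} together with the supporting-line property of Proposition~\ref{prop: Supporting line}, and handle coincidences by a collinearity argument via Lemma~\ref{lem: Curves and lines intersect at 2 points}. Your treatment of the coincidence cases is terse — you say a ``short case analysis'' rules them out — but the route you indicate is exactly the right one: $a_i \neq b_i$ because $w\neq 0$, and if $a_i = b_j$ for $i\neq j$ then $a_j$ is the midpoint of $a_i$ and $b_j - w = a_j - 2w$, producing three distinct collinear points on $\Gamma$, which is forbidden. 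So for (a) the two proofs match.

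For (b) you take a genuinely different route. The paper invokes central symmetry to deduce $v_1,v_2,v_3 \in (x+\Gamma)\cap(y+\Gamma)$ and then applies (a). You instead observe directly that $v+\Gamma \ni x,y$ is equivalent to $a := x-v \in \Gamma$ and $a - (x-y) \in \Gamma$, so the admissible translation vectors are in bijection with the set $S_{x-y} = \{a \in \Gamma : a-(x-y)\in\Gamma\}$, which is precisely what was bounded in (a). This unifies the two parts as a single statement and, notably, makes no use of central symmetry. Your closing remark that central symmetry and non-vanishing curvature are not needed for this lemma is correct; only smoothness and that $\Gamma$ bounds a compact convex body are used. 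This is a mild but real simplification over the paper's proof of (b), though it has no downstream effect since the paper's global hypotheses on $\Gamma$ include central symmetry anyway.
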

\begin{proof}[Proof of (a)]
 Assume that there exist at least three distinct intersections $x,y,z \in (v_{1} + \Gamma) \cap (v_{2} + \Gamma)$. Then, we can write
\begin{equation}\label{eq: SixPointsOnACurve}
x=v_{1}+\gamma_{1,x}=v_{2} + \gamma_{2,x},  \qquad
y=v_{1} +\gamma_{1,y}=v_{2} + \gamma_{2,y}, \qquad
z=v_{1} +\gamma_{1,z}=v_{2} +\gamma_{2,z},
\end{equation}
for some $\gamma_{1,x},\gamma_{1,y},\gamma_{1,z},\gamma_{2,x},\gamma_{2,y},\gamma_{2,z} \in \Gamma$. We claim that at least two of these points are equal. Assume on the contrary that all $6$ points are mutually distinct. By Lemma \ref{lem: Critical convexity}, it follows that there exist $a,b,c,d \in \left\{ \gamma_{1,x},\gamma_{1,y},\gamma_{1,z},\gamma_{2,x},\gamma_{2,y},\gamma_{2,z} \right\}$ all mutually distinct and such that $a\in\textup{co}(b,c,d)$.

Firstly, we notice that it necessarily follows that $a \in \textup{Int}(\textup{co}(b,c,d))$, because if $a$ was an element from the boundary of the convex hull, this would imply the existence of the line which intersects $\Gamma$ in three points, which {cannot} exist because of Lemma \ref{lem: Curves and lines intersect at 2 points}. For instance if $a \in [b,d]$, then it lies on the line spanned by $b$ and $d$ and we obtain our contradiction.

Let $\ell_a$ be the supporting line to $\Gamma$ at $a$, which we know exists because of Proposition \ref{prop: Supporting line}, and let $\mathcal{H}$ be an open half-plane which contains $\Gamma\setminus\{a\}$. However, since $\mathcal{H}$ is a convex set, it follows that
\[
a \in \textup{Int}(\textup{co}(b,c,d))\subseteq \textup{co}(b,c,d)\subseteq \mathcal{H},
\]
which is in contradiction with Proposition \ref{prop: Supporting line}.

Therefore, we know that the set $\left\{ \gamma_{1,x},\gamma_{1,y},\gamma_{1,z},\gamma_{2,x},\gamma_{2,y},\gamma_{2,z} \right\}$ consists of at most $5$ points. Thus, two of the elements listed above actually equal each other. We consider several possibilities how this might occur and show that they all lead to a contradiction. If for $a \in \left\{ x,y,z \right\}$ it follows that $\gamma_{1,a}=\gamma_{2,a}$, then by \eqref{eq: SixPointsOnACurve}, we conclude that $v_1=v_2$,
which contradicts the assumption that $v_{1}$ and $v_{2}$ are mutually distinct. 
If for mutually distinct $a,b \in \left\{ x,y,z \right\}$ it follows that $\gamma_{1,a}=\gamma_{2,b}$, again by \eqref{eq: SixPointsOnACurve}, we would get that
\[
\gamma_{1,a}=\frac{1}{2}(\gamma_{1,b} +\gamma_{2,a}),
\]
which would imply that the line determined by $\gamma_{1,b}$ and $\gamma_{2,a}$ also contains $\gamma_{1,a}$, which contradicts Lemma \ref{lem: Curves and lines intersect at 2 points}, since all three points $\gamma_{1,a},\gamma_{1,b}$ and $\gamma_{2,a}$ lie on the curve $\Gamma$.
Finally, if for mutually distinct $a,b \in \left\{ x,y,z \right\}$ it either follows that$\gamma_{1,a}=\gamma_{1,b}$ or $\gamma_{2,a}=\gamma_{2,b}$, then, once more, \eqref{eq: SixPointsOnACurve} implies that either $\gamma_{2,a}=\gamma_{2,b}$ or $\gamma_{1,a}=\gamma_{1,b}$ and hence in both cases $a=b$, contradicting our assumption. Since we have exhausted all the possible cases of equality, this concludes the proof.
\end{proof}
\begin{proof}[Proof of (b)]
Assume the contrary, that is, that there exist mutually distinct vectors $v_{1},v_{2},v_{3} \in \mathbbm{R}^2$ such that
        \[
        x,y \in v_{1} + \Gamma, v_{2} + \Gamma, v_{3}+\Gamma.
        \]
        By central symmetry of $\Gamma$, we conclude that
        \[
        v_{1},v_{2},v_{3} \in (x+\Gamma) \cap (y+\Gamma).
        \]
        However, this is a contradiction with (a), because we have assumed that $v_{1},v_{2},v_{3}$ are mutually distinct.
\end{proof}
\section{Common generalization of $\overline{\delta}(A,B)$ and $\overline{\delta}_{VC}(A,B)$ }\label{sec: New definition is a generalization}
In this section we explore the common thread between Definitions \ref{def: Joint density for 2 sets} and \ref{def: density nearness condition}. In fact, we will study a slightly more general quantity, which we now define.
\begin{defn}[Joint upper Banach density]\label{def: Joint Upper Banach Density}
For $n \in \mathbb{N}$ and $2^n$ locally integrable functions $(f_{\eta})_{\eta \in \left\{ 0,1 \right\}^n}$, we consider the quantity
\begin{align}\label{eq: New condition}
\overline{\delta}((f_{\eta})_{\eta \in \left\{ 0,1 \right\}^n}):=\sup_{M \geq1} \limsup_{R \to +\infty} \sup_{z \in \mathbbm{R}^2} \inf_{M \leq r \leq 2^{-n}R}
    \frac{1}{(2r)^{2n}R^{2}}\int_{z+[0,R]^2}\int_{([-r,r]^2)^n}&\mathcal{F}_{n}(x;v_{1},\dots,v_{n})\,\notag\\
    &\textup{d}v_{1}\dots\,\textup{d}v_{n}\,\textup{d}x,
\end{align}
where for $x,v_{1},\dots,v_{n} \in \mathbbm{R}^2$, we have slightly abused notation introduced in Section \ref{sec: Gaussians and Fourier analysis}, by defining
\[
\mathcal{F}_{n}(x;v_{1},\dots,v_{n}):=\prod_{\eta\in \left\{ 0,1 \right\}^n}f_{\eta}(x+\eta_{1}v_{1}+\dots+\eta_{n}v_{n}).
\]    
\end{defn}
We note that in the definition of upper Banach density we are essentially calculating limits of averages of an indicator function over a given cube. The above quantity 
generalizes this idea by looking at averages of multiple functions, whose variables are tied up in a hypercubic structure. It is easy to see that by picking various different indicator functions, we are essentially considering versions of upper Banach density for multiple sets. Indeed, for measurable sets $A,B \subseteq \mathbbm{R}^2$ and for $n=1$, we recover Definition \ref{def: Joint density for 2 sets}, i.e.,
\[
\overline{\delta}(A,B)=\overline{\delta}(\mathbbm{1}_A,\mathbbm{1}_B).
\]
We also briefly comment that Definition \ref{def: Joint Upper Banach Density} generalizes Definition \ref{def: density nearness condition}
in the case when $n=6$. More precisely, if in Definition \ref{def: density nearness condition} we respectively write
$v_4,v_5,v_6$ instead of $s_1,s_2,s_3$, and if we set
\begin{equation}\label{eq: VCChoiceOfVertices}
\begin{aligned}
f_{(1,0,0,0,0,0)}:=&\,
f_{(0,1,0,0,0,0)}=
f_{(0,0,1,0,0,0)}=\mathbbm{1}_{A},\\
f_{(0,0,0,0,0,0)}:=&\,f_{({1},0,0,1,0,0)}=f_{(0,{1},0,0,1,0)}=f_{(0,0,{1},0,0,1)}\\
=&\,f_{(1,1,0,0,0,0)}=f_{(1,0,1,0,0,0)}=f_{(0,1,1,0,0,0)}=\mathbbm{1}_{B}
\end{aligned}
\end{equation}
and for all other tuples $(\eta_{1},\eta_{2},\eta_{3},\eta_{4},\eta_{5},\eta_{6})$ not listed here, we set $f_{(\eta_1,\eta_2,\dots,\eta_6)}=\mathbbm{1}_{\mathbbm{R}^2}$, then we can notice that
\[
\overline{\delta}((f_\eta)_{\eta \in \{0,1\}^6})=\overline{\delta}_{VC}(A,B).
\]
We prefer to use both the full hypercube notation and the notation which highlights the vertices of interest to us, depending on the context. We turn our attention to showing the proposition which helped us conclude that the qualitative conditions of our new quantities being positive truly generalizes the qualitative condition of a set having  positive upper Banach density.
\begin{prop}\label{prop: New condition comparability}
Consider a measurable set $A \subseteq \mathbbm{R}^2$. For $n \in \mathbb{N}$, also consider $2^n$ locally integrable functions $(f_{\eta})_{{\eta} \in \left\{ 0,1 \right\}^n}$, where for each ${\eta} \in \{0,1\}^n$, $f_{\eta}\in \{\mathbbm{1}_A,\mathbbm{1}_{\mathbbm{R}^2}\}$. Assume that at least one $f_{\eta}$ equals $\mathbbm{1}_{A}$.
We claim that
\[
\overline{\delta}(A)^{\frac{n+1}{2^n}}\gtrsim_{n}\overline{\delta}((f_{\eta})_{{\eta} \in \left\{ 0,1 \right\}^n})\gtrsim_{n}\overline{\delta}(A)^{2^n}.
\]
\end{prop}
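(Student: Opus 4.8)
The statement is a two-sided comparison between $\overline{\delta}(A)$ and $\overline{\delta}((f_r)_r)$ when each $f_r\in\{\mathbbm 1_A,\mathbbm 1_{\mathbbm R^2}\}$ and at least one $f_r=\mathbbm 1_A$. I would prove the two inequalities separately.

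\textbf{The upper bound $\overline{\delta}((f_r)_r)\lesssim_n \overline{\delta}(A)^{(n+1)/2^n}$.} Fix $M\ge 1$, a cube $Q=z+[0,R]^2$, and $M\le r\le 2^{-n}R$. In the inner average
\[
\frac{1}{(2r)^{2n}R^2}\int_Q\int_{([-r,r]^2)^n}\mathcal F_n(x;v_1,\dots,v_n)\,\textup dv\,\textup dx,
\]
bound every factor $f_r$ with $f_r=\mathbbm 1_{\mathbbm R^2}$ by $1$ and keep only the factors equal to $\mathbbm 1_A$; since at least one such factor is present, we are left with an average of a product $\prod_{r\in S}\mathbbm 1_A(x+r\cdot v)$ over some nonempty $S\subseteq\{0,1\}^n$. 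Enlarge $S$ to all of $\{0,1\}^n$ — this is exactly where one uses that the cube $Q$ can be slightly enlarged: translating $x$ and absorbing the $v_i$-shifts one sees the full product $\mathcal F_n^A(x;v):=\prod_{r\in\{0,1\}^n}\mathbbm 1_A(x+r\cdot v)$ can be upper-bounded by the original average over a comparably sized cube, up to a constant $C_n$; alternatively one observes that dropping factors only increases the average so it suffices to note the average of $\prod_{r\in S}\mathbbm 1_A$ is at most the average of $\mathbbm 1_A(x+r_0\cdot v)$ for a single $r_0\in S$, which after the change of variables $x\mapsto x-r_0\cdot v$ (enlarging the cube by $O(r)=O(2^{-n}R)$) is essentially $|A\cap Q'|/|Q'|$. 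Taking $\inf_r$, then $\sup_z$, then $\limsup_R$, then $\sup_M$ gives $\overline{\delta}((f_r)_r)\lesssim_n\overline{\delta}(A)$. To upgrade the exponent to $(n+1)/2^n$ I would instead keep the full Gowers box and invoke the Gowers--Cauchy--Schwarz machinery: by \eqref{eq: GCS} applied with the $f_r$ (each $\mathbbm 1_A$ or $\mathbbm 1_{\mathbbm R^2}$, restricted to the relevant cube), the inner integral is controlled by $\prod_r\|f_r\mathbbm 1_{Q'}\|_{\textup U^n}\le \|\mathbbm 1_A\mathbbm 1_{Q'}\|_{\textup U^n}\,\|\mathbbm 1_{Q'}\|_{\textup U^n}^{2^n-1}$ (using that at least one factor is $\mathbbm 1_A$ and monotonicity $\|\mathbbm 1_A\mathbbm 1_{Q'}\|_{\textup U^n}\le\|\mathbbm 1_{Q'}\|_{\textup U^n}$ for the rest), and $\|\mathbbm 1_{Q'}\|_{\textup U^n}^{2^n}=|Q'|^{n+1}$ up to normalization, while $\|\mathbbm 1_A\mathbbm 1_{Q'}\|_{\textup U^n}^{2^n}\le |Q'|^n |A\cap Q'|$ by bounding all but one factor by $1$; combining and normalizing correctly yields the power $(n+1)/2^n$ on $|A\cap Q'|/|Q'|$ after passing to the limits.

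\textbf{The lower bound $\overline{\delta}((f_r)_r)\gtrsim_n\overline{\delta}(A)^{2^n}$.} Here the point is that for a \emph{single} translated cube the average of the full Gowers product is not small. Fix $\varepsilon>0$ and choose a cube $Q=z+[0,R]^2$ (for $R$ large along the $\limsup$) with $|A\cap Q|\ge(\overline\delta(A)-\varepsilon)R^2$. I would restrict the $v_i$ integration to a small central sub-box, say $v_i\in[-r,r]^2$ with $r$ chosen so that $2^{-n}R$ is comfortably larger, and exploit a pigeonhole/averaging argument: by Fubini the quantity $\int_{([-r,r]^2)^n}\mathcal F_n^A(x;v)\,\textup dv$, integrated over $x\in Q$, equals $\int\cdots\int\prod_r|A\cap(Q-r\cdot v)\cap\cdots|$-type expressions, and one gets a lower bound of the form $(2r)^{2n}$ times (density of $A$ in $Q$) to the power $2^n$ by a repeated application of the inequality $|X\cap Y|\ge|X|+|Y|-|Q|$ — more cleanly, one uses the known fact (the $n=1$ case of which is the Falconer--Marstrand type averaging, and which also follows from \eqref{eq:GCScorr}) that $\|\mathbbm 1_A\mathbbm 1_Q\|_{\textup U^n}\gtrsim_n |Q|^{-1+(n+1)/2^n}|A\cap Q|$, hence the \emph{full} Gowers average $\int_{(\mathbbm R^2)^{n+1}}\mathcal F_n^A\gtrsim_n |Q|^{n+1}(|A\cap Q|/|Q|)^{2^n}$. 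One then must localize this lower bound to $v_i\in[-r,r]^2$: since $A\cap Q$ can be a thin strip near the boundary the localized average could be much smaller, so I would first replace $Q$ by a slightly smaller concentric cube $Q''$ with $|A\cap Q''|\ge(\overline\delta(A)-2\varepsilon)|Q''|$ (possible since density in cubes of comparable size cannot all be small, again by an averaging/pigeonhole over sub-cubes) and choose $r\sim 2^{-n}R$ so that the $v$-box is a fixed fraction of $Q''$; then the localized Gowers average over $x\in Q''$, $v\in([-r,r]^2)^n$ is still $\gtrsim_n|Q''|(2r)^{2n}(\overline\delta(A))^{2^n}$. Since every $f_r$ is either $\mathbbm 1_A$ or $\mathbbm 1_{\mathbbm R^2}\ge\mathbbm 1_A$, we have $\mathcal F_n(x;v)\ge\mathcal F_n^A(x;v)$ pointwise, so the same lower bound holds for $\mathcal F_n$. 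Normalizing by $(2r)^{2n}R^2$, taking $\inf_r$ (the bound is uniform in the admissible range of $r$ once $r$ is chosen $\sim 2^{-n}R$; for smaller $r$ one localizes further but the same averaging gives a uniform lower bound $\gtrsim_n\overline\delta(A)^{2^n}$), then $\sup_z$, $\limsup_R$, $\sup_M$, gives the claim.

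\textbf{Main obstacle.} The routine estimates are the Gowers norm manipulations; the genuinely delicate point is the \emph{localization in the $v_i$ variables} in the lower bound — the definition of $\overline\delta((f_r)_r)$ demands an $\inf$ over $r$ down to the fixed scale $M$, and a priori the $A$-mass inside $Q$ could concentrate so as to make short-range correlations vanish. Resolving this requires the observation that positive upper Banach density forces, for arbitrarily large $R$, a sub-cube of \emph{every} intermediate scale $\gtrsim M$ in which $A$ has density bounded below by roughly $\overline\delta(A)$ (a standard consequence of the fact that the $\limsup$ in the definition of $\overline\delta$ is actually a limit, together with subadditivity of $|A\cap\cdot|$ over a partition into sub-cubes), and then running the Gowers-type lower bound \eqref{eq:GCScorr} inside that sub-cube with $r$ matched to its side length. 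Handling this uniformly over the whole range $M\le r\le 2^{-n}R$ — rather than for one convenient $r$ — is the part that needs care.
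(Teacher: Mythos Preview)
Your upper bound is essentially the paper's argument: restrict to an enlarged cube, apply the Gowers--Cauchy--Schwarz inequality \eqref{eq: GCS} with one factor $\mathbbm 1_A\mathbbm 1_{Q'}$ and the remaining factors $\mathbbm 1_{Q'}$, and read off the exponent $(n+1)/2^n$. The paper does exactly this (taking the specific value $r=2^{-n}R$ to beat the infimum), so that half is fine.

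The lower bound, however, has a genuine gap. Your plan for small $r$ is to locate \emph{one} sub-cube $Q_r$ of side $\sim r$ inside $z+[0,R]^2$ with $|A\cap Q_r|\gtrsim\overline\delta(A)\,|Q_r|$ and run \eqref{eq:GCScorr} there. That yields
\[
\int_{Q_r}\int_{([-r,r]^2)^n}\mathcal F_n\ \gtrsim_n\ r^{2}\,(2r)^{2n}\,\overline\delta(A)^{2^n},
\]
but after dividing by $(2r)^{2n}R^2$ you are left with $\gtrsim_n (r/R)^{2}\,\overline\delta(A)^{2^n}$, which is \emph{not} uniform in $r\in[M,2^{-n}R]$ and collapses as $r/R\to 0$. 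Finding one good sub-cube at each scale is not enough; you must account for the full $x$-integral over the big cube.

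The paper's fix is to partition $z+[0,R]^2$ into a grid $\mathcal Q$ of sub-cubes of side $r'\sim r$, observe that if all $2^n$ vertices $x+r\cdot v$ lie in the same $Q\in\mathcal Q$ then automatically $v_i\in[-r',r']^2\subseteq[-r,r]^2$, and hence
\[
\frac{1}{(2r)^{2n}R^2}\int_{z+[0,R]^2}\int_{([-r,r]^2)^n}\mathcal F_n
\ \gtrsim_n\ \frac{1}{r^{2n}R^2}\sum_{Q\in\mathcal Q}\lVert\mathbbm 1_{A\cap Q}\rVert_{\textup U^n}^{2^n}.
\]
Now apply \eqref{eq:GCScorr} to each $Q$ and then Jensen's inequality (convexity of $t\mapsto t^{2^n}$) to the sum:
\[
\sum_{Q\in\mathcal Q}\Big(\int_Q\mathbbm 1_A\Big)^{2^n}\ \ge\ |\mathcal Q|^{1-2^n}\Big(\sum_{Q\in\mathcal Q}\int_Q\mathbbm 1_A\Big)^{2^n}\ \gtrsim\ |\mathcal Q|^{1-2^n}\big(\overline\delta(A)R^2\big)^{2^n}.
\]
With $|\mathcal Q|\sim(R/r)^2$ the powers of $r$ and $R$ cancel exactly, giving the uniform bound $\gtrsim_n\overline\delta(A)^{2^n}$. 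The missing ingredient in your sketch is precisely this ``sum over all sub-cubes $+$ Jensen'' step; a single good sub-cube cannot carry the normalization by $R^2$.
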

\begin{proof}
 From the definition of the upper Banach density, we conclude that there exists a sufficiently large $R_{0}>0$ such that for all $R \geq R_{0}$ and all $z \in \mathbbm{R}^2$, we have that 
\begin{equation}\label{eq: ComparingWithBanachUpperBound}
\frac{\lvert A \cap (z+[0,R]^2) \rvert }{R^2}<2\overline{\delta}(A),    
\end{equation}
Fix $M \geq 1$ and $z \in \mathbbm{R}^2$ and denote
\[
I(R,z):=\inf_{M\leq r\leq 2^{-n}R}\frac{1}{(2r)^{2n}R^{2}}\int_{z+[0,R]^2}\int_{([-r,r]^2)^n}\mathcal{F}_{n}(x;v_{1},v_{2},\dots,v_{n})\,\textup{d}v_{1}\dots \textup{d}v_{n}\,\textup{d}x.
\]
By considering $r=2^{-n}R$, we easily conclude that
\[
I(R,z) \lesssim_{n} \frac{1}{(R^2)^{n+1}}\int_{(\mathbbm{R}^2)^{n+1}}\prod_{{\eta} \in \{0,1\}^n}(\mathbbm{1}_{(z-(R,R))+[0,{3}R]^2}\cdot f_{\eta})(x+{\eta}\cdot v)\,\textup{d}v_{1}\dots \textup{d}v_{n}\,\textup{d}x.
\]
Since at least one function in the above product is equal to $\mathbbm{1}_{A\cap((z-(R,R))+[0,{3}R]^2)}$, we can bound all the functions on the remaining vertices of the hypercube by $\mathbbm{1}_{(z-(R,R))+[0,{3}R]^2}$. After performing this bound and applying the Gowers--Cauchy--Schwarz inequality \eqref{eq: GCS}, followed by the assumption \eqref{eq: ComparingWithBanachUpperBound}, we conclude that
\begin{align*}
I(R,z) &\lesssim_{n}R^{-2n-2}\lVert \mathbbm{1}_{A \cap ((z-(R,R))+[0,{3}R]^2)} \rVert_{\textup{U}^{n}(\mathbbm{R}^2)}\lVert \mathbbm{1}_{(z-(R,R))+[0,{3}R]^2} \rVert_{\textup{U}^n(\mathbbm{R}^2)}^{2^{n}-1} \\
&\lesssim R^{-2n-2}R^{\frac{n+1}{2^{n-1}}}\!\Bigg(\frac{\lvert A\cap((z-(R,R))+[0,{3}R]^2) \rvert }{({3}R)^2}\Bigg)^{\frac{n+1}{2^n}}\!\!\!\!\!\lvert ((z-(R,R))+[0,{3}R]^2) \rvert^{\frac{n+1}{2^n}(2^n-1)} \\
&\lesssim\overline{\delta}(A)^{\frac{n+1}{2^n}},\\
\end{align*}
where we have used the fact that the uniformity norms of indicator functions of measurable sets can be bounded by appropriate powers of the measure of the set. Since this was true for all $z \in \mathbbm{R}^2$ and $M \geq 1$, by taking a supremum in $z$, followed by a limit superior in $R$ and finally a supremum in $M$, we conclude that
\[
\overline{\delta}((f_{\eta})_{{\eta} \in \left\{ 0,1 \right\}^n})\lesssim_{n}\overline{\delta}(A)^{\frac{n+1}{2^n}}.
\]
Next, we prove the lower bound. Unpacking the definition of the usual upper Banach density, we can find an increasing sequence of positive real numbers $(R_m)_{m \in \mathbbm{N}}$ converging to infinity and a sequence $(z_m)_{m \in \mathbb{N}}$ in $\mathbbm{R}^2$ such that for all $m \in \mathbb{N}$, we have
\begin{equation}\label{eq: ComparingWithBanachLowerBound}
\int_{z_m+[0,R_m]^2} \mathbbm{1}_A(x) \textup{d}x > \frac{\overline{\delta}(A)R_m ^2}{2}.    
\end{equation}
Now fix some $M \geq 1$ and consider $m \in \mathbb{N}$ large enough so that $M<2^{-n}R_m$.
For $M \leq r \leq 2^{-n}R_m$, we strive to bound
\begin{align*}
I(r,R_m,z_m):=\frac{1}{(2r)^{2n}R_m^2}\int_{z_m+[0,R_m]^2}\int_{([-r,r]^2)^n}
\mathcal{F}_{n}(x;v_{1},v_{2},\dots,v_{n})
\,\textup{d}v_{1}\dots \textup{d}v_{n}\,\textup{d}x
\end{align*}
from below by a positive constant multiple of $\overline{\delta}(A)^{2^n}$. To start with, we partition the cube $z_m+[0,R_m]^2=(z_1^m,z_2^m)+[0,R_m]^2$ into cubes of side-length $r'$, where
\[
r':=\frac{1}{2}\frac{R_m}{\left\lfloor \frac{R_m}{r} \right \rfloor}.
\]
From elementary properties of the floor function, it follows that $
\frac{r}{2}\leq {r'} \leq r.
$
Explicitly, we consider the family
\[
\mathcal{Q} :=\{[z_1^m+k{r'},z_1^m+(k+1){r'})\times [z_2^m+l{r'},z_2^m+(l+1){r'}) :{ k,l\in \{0,1\dots,2\left\lfloor\frac{R_m}{r}\right\rfloor-1 \}}\},
\]
noting that $\lvert\mathcal{Q}\rvert=4\left\lfloor\frac{R_m}{r}\right\rfloor^2$. 
We begin by estimating all the functions in the product $\mathcal{F}_{n}(x;v_{1}\dots,v_{n})$ from below by $\mathbbm{1}_A$. We also note that if we know that all $2^n$ points $x+ {\eta} \cdot v$ lie inside the same cube $Q$ of side-length ${r'}$, then the edges $v_{i}$ lie inside the cube {$[-r',r']^2$} and so we can actually estimate
\begin{align*}
I(r,R_m,z_{m})&\gtrsim_{n}\frac{1}{r^{2n}R_m^2 } \sum_{Q \in \mathcal{Q}}\lVert \mathbbm{1}_{A \cap Q} \rVert_{\textup{U}^n(\mathbbm{R}^2)}^{2^n}.
\end{align*}
Combining our inequality derived from Gowers--Cauchy--Schwarz \eqref{eq:GCScorr} with Jensen's inequality and then applying \ref{eq: ComparingWithBanachLowerBound}, we obtain
\begin{align*}
I(r,R_m,z_m)
&\gtrsim_{n}\frac{r^{-2^{n+1}+2}\lvert \mathcal{Q} \rvert }{R_{m}^2}\Bigg(\frac{1}{\lvert \mathcal{Q} \rvert }\sum_{Q \in \mathcal{Q}}\int_{Q} \mathbbm{1}_{A}(x)\,\textup{d}x\Bigg)^{2^n} \gtrsim_n\overline{\delta}(A)^{2^n}.
\end{align*}
Since this was true for all $M \geq 1$, $m \in \mathbb{N}$ large enough and $M \leq r \leq 2^{-n}R_m$, we can first take the infimum in $r$ and then take the limit superior in $R$ and finally a supremum in $M$ to conclude that
\[
\overline{\delta}((f_{\eta})_{{\eta} \in \left\{ 0,1 \right\}^n})\gtrsim_{n} \overline{\delta}(A)^{2^n}.
\]
\end{proof}
\section{Proof of Theorem \ref{thm: two set Szekely qualitative}}\label{sec: Szekely and motivation}
We now turn to proving Theorem \ref{thm: two set Szekely qualitative}. In the spirit of Bourgain, we reduce this qualitative theorem to the following more quantitative formulation.
\begin{theorem}[Two-set Szek\'ely, quantitative version]\label{thm: two set Szekely quantitative}
    {Let $\delta \in (0,1)$ be arbitrary. Then, there exists $J=J(\delta)$ such that for all $\lambda_1,\lambda_2,\dots,\lambda_J >1$, $R>0$ and $M\geq 1$ which satisfy 
    \begin{itemize}
        \item $2\lambda_k \leq \lambda_{k+1}$ for all $k=1,2,\dots,J-1$,
        \item $R \geq 2\lambda_J$,
        \item $\lambda_1,\lambda_2,\dots,\lambda_J \in [M,2^{-1}R]$,
    \end{itemize}
    the following holds:
    For an arbitrary $z\in \mathbbm{R}^2$ and for all $A,B \subseteq z+[0,R]^2$ which satisfy
    \begin{equation}\label{eq: SzekelyAssumption}
    \inf_{M \leq r \leq 2^{-1}R} \frac{1}{(2r)^2}\int_{z+[0,R]^2}\int_{\mathbbm{R}^2} \mathbbm{1}_A(x)\mathbbm{1}_B(y)\mathbbm{1}_{[-r,r]^2}(x-y) \,\textup{d}y\,\textup{d}x > \delta R^2,
    \end{equation}
    there exists $j \in \{1,2,\dots,J\}$ such that for $\lambda=\lambda_j$, it follows that
    \begin{equation}\label{eq: SzekelyConclusion}
    \int_{\mathbbm{R}^2}\int_{\mathbbm{R}^2}\mathbbm{1}_A(x)\mathbbm{1}_B(x+v)\, \textup{d}\sigma_{\lambda}(v)\,\textup{d}x \gtrsim_{\delta} R^2.
    \end{equation}}
\end{theorem}
\begin{proof}[Theorem \ref{thm: two set Szekely quantitative}\ implies Theorem \ref{thm: two set Szekely qualitative}]
    Assume that Theorem \ref{thm: two set Szekely quantitative} is true and that Theorem \ref{thm: two set Szekely qualitative} is not. Then, there exist measurable sets $A, B \subseteq \mathbbm{R}^2$ such that $\overline{\delta}(A,B)>0$ and there exists a sequence of scales $(\lambda_m)_{m \in \mathbbm{N}}$ such that, for all $x \in A$ and $y \in B$, $\lvert x- y \rvert \neq \lambda_m$ for all $m \in \mathbbm{N}$. By passing to a subsequence, we can assume that $\lambda_1 > 1$ and $2\lambda_m \leq \lambda_{m+1}$ for all $m \in \mathbbm{N}$. We now apply Theorem \ref{thm: two set Szekely quantitative} with $\delta=\frac{\overline{\delta}(A,B)}{4}$ to obtain $J\in \mathbbm{N}$, for which the conclusion of Theorem \ref{thm: two set Szekely quantitative} holds. 
    Unpacking the definition of $\overline{\delta}(A,B)$, we find $M \geq 1$, a {sequence} $(R_m)_{m \in \mathbbm{N}}$ of positive real numbers converging to infinity, and a sequence $(z_m)_{m \in \mathbbm{N}}$ in $\mathbbm{R}^2$. For all $m \in \mathbbm{N}$, it follows that
    \[
    \inf_{M \leq r \leq 2^{-1}R_m} \frac{1}{4r^2}\int_{z_m+[0,R_m]^2}\int_{\mathbbm{R}^2} \mathbbm{1}_{A}(x)\mathbbm{1}_{B}(y)\mathbbm{1}_{[-r,r]^2}(x-y)\,\textup{d}y\,\textup{d}x >\delta R_m^2.
    \]
    Since $(\lambda_m)_{m \in \mathbbm{N}}$ and $(R_m)_{m \in \mathbbm{N}}$ are sequences converging to infinity, there exist some $i,m \in \mathbbm{N}$ such that $\lambda_i>M$ and $2^{-1}R_m>2\lambda_{i+J-1}$. Clearly, all the hypotheses of Theorem \ref{thm: two set Szekely quantitative} are now satisfied and so, we can find some $j \in \{0,1,\dots,J-1\}$ such that \eqref{eq: SzekelyConclusion} holds for $\lambda=\lambda_{i+j}$.
    This however immediately implies the existence of some $x \in A$ and $y \in B$ such that
    \[
    \lvert x- y \rvert = \lambda_{i+j},
    \]
    contradicting the very definition of the scale $\lambda_{i+j}$.
\end{proof}

\begin{proof}[Proof of Theorem \ref{thm: two set Szekely quantitative}]
For logical rigor's sake, we immediately choose
\begin{equation}\label{eq: TwoSetSzekelyParameterChoices}
\varepsilon:=\Big(\frac{\delta {{C_{\textup{str}}}}}{3{C_{\textup{uni}}}}\Big)^2 \qquad \textup{and then} \qquad J:=\left\lceil\frac{3 {C_{\textup{err}}} \log(\varepsilon^{-1})}{{C_{\textup{str}}}{\delta}}\right\rceil, 
\end{equation}
where ${C_{\textup{str}}} := \inf_{z \in B(0,3)} \mathbbm{g}(z)$, ${C_{\textup{uni}}}$ is the implicit constant from Lemma \ref{lem: Fourier dimension estimate} (b) and ${C_{\textup{err}}}$ is the implicit constant from Lemma \ref{lem: Smooth disjointness estimate}. 
Let $\lambda_1, \lambda_2,\dots,\lambda_J >1$ be such that $2\lambda_k \leq \lambda_{k+1}$ for all $k = 1,2,\dots,J-1$ and also let $R,M > 1$ be such that $\lambda_1, \lambda_2,\dots,\lambda_J \in [M,2^{-1}R]$ and $R>2\lambda_{J}$. Consider $z \in \mathbbm{R}^2$ and $A,B \subseteq z+[0,R]^2$ which satisfy \eqref{eq: SzekelyAssumption}. For clarity's sake, we allow {ourselves} the flexibility to use the symbol $\lambda$ to stand for any one of the scales $\lambda_1, \lambda_2, \dots, \lambda_J$ when the estimate in question does not hinge on precisely which one of them we are working with.
To simplify notation, we consider the counting form
\[ \mathcal{N}^{0}_{\lambda} :=
\int_{\mathbbm{R}^{2}} \int_{\mathbbm{R}^2} \mathbbm{1}_A(x)\mathbbm{1}_B(x+v) \,\textup{d}\sigma_{\lambda}(v) \,\textup{d}x. \]
We briefly mention that Bourgain's original proof of Szek\'{e}ly's theorem in \cite{B86:roth}  proceeds by taking the Fourier transform and splitting the integral analogous to the one above into ``low'', ``high'' and ``mid'' frequency parts. The interested reader may also wish to consult \cite{Tao2021_BourgainToolkit}.
However, we rather choose to follow a variant of the scheme introduced by Cook, Magyar and Pramanik in \cite{CMP15:roth}, which introduces an additional parameter, which we call the ``smoothness scale''. More precisely, for $\varepsilon>0$, we consider the smoothed-out version of the counting form:
\[ \mathcal{N}^{\varepsilon}_{\lambda} :=
\int_{\mathbbm{R}^{2}} \int_{\mathbbm{R}^2} \mathbbm{1}_A(x)\mathbbm{1}_B(x+v) (\sigma_{\lambda}\ast \mathbbm{g}_{\varepsilon\lambda})(v) \,\textup{d}v \,\textup{d}x .\]
We split the counting form into three parts:
\[
\mathcal{N}^{0}_{\lambda} ={ I_{\textup{str}} + I_{\textup{err}} + I_{\textup{uni}}},
\]
where
\[{I_{\textup{str}} := \mathcal{N}_{\lambda
}^{1}}, \quad {I_{\textup{err}}} :=  \mathcal{N}^{\varepsilon}_{\lambda}-\mathcal{N}^{1}_{\lambda}, \quad {I_{\textup{uni}}} :=  \mathcal{N}^{0}_{\lambda}-\mathcal{N}^{\varepsilon}_{\lambda} \]
and call them by their established literature names as, ``the structured part'', ``the error part'' and the ``uniform part'' of $\mathcal{N}^{0}_{\lambda}
$ respectively.
We begin by bounding the structured part from below. We start by recalling an elementary convolution inequality
\begin{equation}\label{eq:elemconvineq}
\sigma_\lambda \ast \mathbbm{g}_{\lambda} \gtrsim  \lambda^{-2}\mathbbm{1}_{[-\lambda,\lambda]^2},
\end{equation}
where the implied constant has the value of $C_{\textup{str}}=\inf_{z \in B(0,3)} \mathbbm{g}(z)$. Using this bound, we immediately obtain that
\begin{align*}
\mathcal{N}^{1}_{\lambda}&\gtrsim {\lambda^{-2}}
\int_{z+[0,R]^2}\int_{\mathbbm{R}^2}\mathbbm{1}_A(x)\mathbbm{1}_B(x+y) \mathbbm{1}_{[-\lambda,\lambda]^2}(y) \,\textup{d}y \,\textup{d}x
\end{align*}
and since $\lambda \in [M,2^{-1}R]$, from \eqref{eq: SzekelyAssumption}, we can conclude
\begin{equation}\label{eq: TwoSetSzekelyStructuredBound}
\mathcal{N}_{\lambda}^1 \geq C_{\textup{str}}\delta R^2.    
\end{equation}
\bigskip
Our next goal is to bound $\lvert I_{\textup{uni}} \rvert$ from above. A standard dominated convergence argument gives that
\[
\lim_{\theta \to 0} \mathcal{N}_{\lambda}^{\theta}=\mathcal{N}_{\lambda}^{0},
\]
so, we actually focus on bounding
\[
\lvert   \mathcal{N}_{\lambda}^{\varepsilon} - \mathcal{N}_{\lambda}^{\theta} \rvert,
\]
for $0<\theta<\varepsilon$. Using the {reparametrized} heat equation \eqref{eq:heateq} and the fundamental theorem of calculus, we obtain
\begin{align*}
\lvert   \mathcal{N}_{\lambda}^{\varepsilon} - \mathcal{N}_{\lambda}^{\theta} \rvert & \lesssim \int_{\theta}^{\varepsilon}\int_{\mathbbm{R}^2}\int_{\mathbbm{R}^2}\mathbbm{1}_A(x)\mathbbm{1}_B(x+y) (\sigma_{\lambda} \ast \mathbbm{k}_{t \lambda})(y)  
\,\textup{d}y \,\textup{d}x \,\frac{\textup{d}t}{t}
\end{align*}
By exploiting the central symmetry of the circle, we can actually recognize that in the integral above, we have the triple convolution
$
\mathbbm{1}_B \ast\sigma_{\lambda} \ast \mathbbm{k}_{t \lambda}
$
and so, by passing to the Fourier side, using the standard formulae, the fact that $A$ and $B$ are subsets of the cube of side-length $R$ and Lemma \ref{lem: Fourier dimension estimate}\,(b), we get
\begin{align*}
    \lvert   \mathcal{N}_{\lambda}^{\varepsilon} - \mathcal{N}_{\lambda}^{\theta} \rvert & \lesssim\int_{\theta}^{\varepsilon} \int_{\mathbbm{R}^2} \lvert \widehat{\mathbbm{1}_A}(\xi)\rvert\lvert\widehat{\mathbbm{1}_B}(\xi)\rvert \lvert \widehat{\sigma_{\lambda}}(\xi) \rvert \lvert \widehat{\mathbbm{k}_{t \lambda}}(\xi)\rvert \, \textup{d}\xi \,\frac{\textup{d}t}{t} \\
    &\lesssim R^2 \int_{\theta}^{\varepsilon} t^{\frac{-1}{2}} \textup{d}t=R^2(\varepsilon^{\frac{1}{2}} - \theta^{\frac{1}{2}}).
\end{align*}
Taking the limit as $\theta$ goes to $0$, we conclude that
\begin{equation}\label{eq: TwoSetSZekelzUniformBound}
\lvert   \mathcal{N}_{\lambda}^{\varepsilon} - \mathcal{N}_{\lambda}^{0} \rvert \leq C_{\textup{uni}}R^{2}\varepsilon^{\frac{1}{2}}    {.}
\end{equation}
\bigskip
Similarly to what we have done for the uniform part, we conclude 
\begin{align*}
\sum_{j=1}^{J}  \big\lvert \mathcal{N}^{1}_{\lambda_j}-\mathcal{N}^{\varepsilon}_{\lambda_j}\big\rvert 
&\lesssim \sum_{j=1}^{J} \int_{\varepsilon}^1 \int_{\mathbbm{R}^2} \lvert\widehat{\mathbbm{1}_A}(\xi)\rvert \lvert \widehat{\mathbbm{1}_B}(\xi) \rvert\lvert \widehat{\sigma_{\lambda_j}}(\xi) \rvert \lvert \widehat{\mathbbm{k}_{t \lambda_j}}(\xi)\rvert \, \textup{d}\xi \,\frac{\textup{d}t}{t} \\
&\lesssim \int_{\mathbbm{R}^2} \int_{\varepsilon}^1 \lvert \widehat{\mathbbm{1}_A}(\xi)\rvert \lvert\widehat{\mathbbm{1}_B}(\xi)\rvert \sum_{j=1}^J \lvert\widehat{\mathbbm{k}}(\lambda_j(t\xi))\rvert \frac{\textup{d}t}{t} \textup{d}\xi
\lesssim \log(\varepsilon^{-1})R^2,
\end{align*}
where the last inequality follows from Lemma \ref{lem: Smooth disjointness estimate} applied with ${\zeta_j} = \lambda_j t \xi$, followed by integration in $t$ and then an application of the Cauchy--Schwarz inequality and Plancherel's theorem. A standard pigeonholing argument now allows us to conclude that there exists a scale $\lambda=\lambda_i \in \{\lambda_1,\lambda_2,\dots,\lambda_J\}$ such that
\begin{equation}\label{eq: TwoSetSzekelyErrorBound}
\big\vert \mathcal{N}^{\varepsilon}_{\lambda_i}(\mathbbm{1}_B)-\mathcal{N}^{1}_{\lambda_i}(\mathbbm{1}_B)\big\vert \leq {C_{\textup{err}}} J^{-1}\log(\varepsilon^{-1})R^2.    
\end{equation}
Combining the estimates \eqref{eq: TwoSetSzekelyStructuredBound}, \eqref{eq: TwoSetSzekelyErrorBound}, \eqref{eq: TwoSetSZekelzUniformBound} together with the choice of parameters \eqref{eq: TwoSetSzekelyParameterChoices} and $\lambda=\lambda_i$, as well as our decomposition of $\mathcal{N}_{\lambda}^0$ into the structured, error and uniform parts, we conclude
\begin{align*}
    \mathcal{N}_{\lambda}^0 & \geq I_{\textup{str}} - \lvert I_{\textup{err}} \rvert - \lvert I_{\textup{uni}}\rvert 
    \geq{C_{\textup{str}}}\Big( \delta R^2 - \frac{\delta R^2}{3} - \frac{\delta R^2}{3}\Big)=  C_{\textup{str}}\frac{\delta R^2}{3}.\qedhere
\end{align*}
\end{proof}
\section{Proof of Theorem \ref{thm: VC dimension, main theorem}}\label{sec: VC dimensions proofs}
We begin this section by proving the aforementioned reduction of Theorem \ref{thm: VC dimension, main theorem} to {Theorem} \ref{thm: VC dimension, configuration variant qualitative}. We then prove Theorem \ref{thm: VC dimension, configuration variant qualitative} employing techniques analogous to the ones used in the proof of Theorem \ref{thm: two set Szekely qualitative}. In particular, we first reduce Theorem \ref{thm: VC dimension, configuration variant qualitative} to a ``quantitative'' version, namely Theorem \ref{thm: VC dimension, configuration variant quantitative} below, which we prove by decomposing a suitable counting form into its structured, error and uniform parts, which we bound separately and devote a subsection to.
\begin{proof}[Theorem \ref{thm: VC dimension, configuration variant qualitative} implies Theorem \ref{thm: VC dimension, main theorem}]
Assume that $A,B \subseteq \mathbbm{R}^2$ are measurable sets for which it holds that $\overline{\delta}_{VC}(A,B)>0$. Applying Theorem \ref{thm: VC dimension, configuration variant qualitative}, we find a critical scale $t_0>0$ such that for an arbitrary $t \geq t_0$, we can find $x \in {B}$ and $v_1,v_2,v_3,s_1,s_2,s_3\in t\Gamma$ which satisfy conditions \eqref{eq: graph points which are translations} and \eqref{eq: graph points which shatter}. We claim that the set
\[
C:=\{x+v_1,x+v_2,x+v_3\}
\]
is shattered by the family
\[
\mathcal{T}_t(A,B)=\{(b+ t\Gamma) \cap A : b\in B\}.
\]
Indeed, first note that $C \subseteq A$, which is certainly a necessary condition for our claim. Further, we check that for every $S \in \mathcal{P}(C)$, we can find $b \in B$ such that both the inclusion
\begin{equation}\label{eq: VCMainInclusion}
S \subseteq C \cap ((b+t\Gamma) \cap A)
\end{equation}
and its reverse hold. This will imply that $VC(\mathcal{T}_t(A,B))\geq 3$. We check this case by case, allowing ourselves the flexibility to redefine $b$ as the context requires it. If $S$ is a $3$-element set, i.e., $S=C$, note that for $j\in\{1,2,3\}$, we have
$(x+v_j)-x=v_j\in t\Gamma$ and so, by taking $b=x$, it follows that \eqref{eq: VCMainInclusion} holds, while in this case the reverse inclusion holds trivially. 
Next we consider the $2$-element case, i.e., we assume that $S=\{x+v_i, x+v_j\}$ for some mutually distinct $i,j \in \{1,2,3\}$. Since
\begin{align*}
    (x+v_i)-(x+v_i+v_j) &= -v_j \in t\Gamma, \qquad
    (x+v_j)-(x+v_i+v_j) = -v_i \in t\Gamma,
\end{align*}
because $\Gamma$ is centrally symmetric, it follows that \eqref{eq: VCMainInclusion} holds with $b=x+v_i+v_j$. To check the reverse inclusion, note that if it were to fail, then the curve segment $(b+t\Gamma) \cap A$ would also have to contain the point $x+v_k$, for $k \in \{1,2,3\} \setminus \{i,j\}$ and so we have that all $3$ points of $C$ lie on the curve $x+t\Gamma$. However, since by Theorem \ref{thm: VC dimension, configuration variant qualitative}, we know that $x$ and $b$ are mutually distinct, we have arrived at a contradiction with Lemma \ref{lem: topological translations along curves} (a). Lastly, we consider the $1$-element case, i.e., we assume that $S=\{x + v_j\}$ for some $j\in \{1,2,3\}$. Because $(x+v_j)-(x+v_j+s_j)=-s_j \in t\Gamma$ and because $t\Gamma$ is centrally symmetric, it follows that \eqref{eq: VCMainInclusion} holds with $b=x+v_j+s_j $.
To see that the reverse inclusion also holds, we again suppose otherwise, i.e., we suppose that for some $i \in \{1,2,3\} \setminus\{j\}$, we have that ${x+v_i} \in (b+t\Gamma) \cap A$. However, by everything we have shown so far, we can conclude that
\[
x+v_i,x+v_j \in x+t\Gamma{\,\cap\,}(x+v_i+v_j)+t\Gamma{\,\cap\,}(x+v_j+s_j)+t\Gamma.
\]
Since, by Theorem \ref{thm: VC dimension, configuration variant qualitative}, the points $x,x+v_i+v_j,x+v_j+s_j$ are mutually distinct, we obtain a contradiction with Lemma \ref{lem: topological translations along curves}\,(b). It remains to show that $VC(\mathcal{T}_t(A,B))= 3$, which we do by showing that no $4$-element set can { be shattered by} $\mathcal{T}_t(A,B)$. Assume otherwise and let 
\[
C=\{c_1,c_2,c_3,c_4\}\subseteq A
\]
be a $4$-element set which {is shattered by} $\mathcal{T}_t(A,B)$. However, by definition, this means that there exist mutually distinct $b_1,b_2,b_3 \in B$ such that
\begin{align*}
    ((b_1+t\Gamma) \cap A)&=\{c_1,c_2,c_3,c_4\}, \;
    ((b_2+t\Gamma) \cap A)=\{c_1,c_2,c_3\}, \;
    ((b_3+t\Gamma) \cap A)=\{c_1,c_2,c_4\},
\end{align*}
but this contradicts Lemma \ref{lem: topological translations along curves} (b), because $c_1$ and $c_2$ both lie on $3$ distinct translates of the curve $t\Gamma$.
\end{proof}
 \begin{theorem}\label{thm: VC dimension, configuration variant quantitative}
     {Let $\delta \in (0,1)$ be arbitrary. Then, there exists a large positive integer $J=J(\delta)$ such that for all $\lambda_1,\lambda_2,\dots,\lambda_J >1$, $R>0$ and $M\geq 1$ which satisfy 
     \begin{itemize}
         \item $2\lambda_k \leq \lambda_{k+1}$ for all $k=1,2,\dots,J-1$,
         \item $R \geq 2\lambda_J$,
         \item $\lambda_1,\lambda_2,\dots,\lambda_J \in [M,2^{-6}R]$,
     \end{itemize}   the following holds:
    For an arbitrary $z\in \mathbbm{R}^2$ and for all $A,B \subseteq z+[0,R]^2$ which satisfy 
    \begin{equation}\label{eq: VC dimension, configuration variant quantitative assumption}
        \inf_{M \leq r \leq 2^{-6}R}
    \frac{1}{(2r)^{12}}\int_{z+[0,R]^2}\int_{([-r,r]^2)^6} \mathcal{F}_{A,B}(x;v_1,v_2,v_3;s_1,s_2,s_3)\,\textup{d}v_1\,\textup{d}v_2\,\textup{d}v_3\,\textup{d}s_1\,\textup{d}s_2\,\textup{d}s_3 \,\textup{d}x\gtrsim \delta R^2
    \end{equation}
    there exists $j \in \{1,2,\dots,J\}$ such that for $\lambda=\lambda_j$ it follows that
    \begin{equation} \label{eq: VC dimension, configuration variant quantitative conclusion}
\int_{(\mathbbm{R}^2)^7}\mathcal{F}_{A,B}(x;v_1,v_2,v_3;s_1,s_2,s_3)
    \,\textup{d}\mu_{\lambda}(v_1)\textup{d}\mu_{\lambda}(v_2)\textup{d}\mu_{\lambda}(v_3)\textup{d}\mu_{\lambda}(s_1)\textup{d}\mu_{\lambda}(s_2)\textup{d}\mu_{\lambda}(s_3) \textup{d}x\gtrsim_{\delta} R^2,
    \end{equation}
    where $\mu$ is as before, the {normalized} (uniform) arclength measure supported on a smooth, closed, centrally symmetric planar curve $\Gamma$, of non-vanishing curvature. Furthermore $\Gamma$ is the boundary of a convex and compact region with non-empty interior.}
 \end{theorem}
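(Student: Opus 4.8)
The plan is to run, on a suitable counting form, the decomposition into structured, error and uniform parts already used in the proof of Theorem~\ref{thm: two set Szekely quntitative}. The relevant form is
\[
\mathcal{N}^0_\lambda := \int_{\mathbbm{R}^2}\int_{(\mathbbm{R}^2)^6}\mathcal{F}_{A,B}(x;v_1,v_2,v_3;s_1,s_2,s_3)\,\textup{d}\mu_\lambda(v_1)\cdots\textup{d}\mu_\lambda(s_3)\,\textup{d}x,
\]
which is exactly the left-hand side of \eqref{eq: VC dimension, configuration variant quantitative conclusion}. For $\epsilon>0$ I introduce the smoothed form $\mathcal{N}^\epsilon_\lambda$, obtained by replacing each of the six occurrences of $\mu_\lambda$ by $\mu_\lambda\ast\mathbbm{g}_{\epsilon\lambda}$, the form $\mathcal{N}^1_\lambda$ in which every $\mu_\lambda$ is replaced by $\mu_\lambda\ast\mathbbm{g}_\lambda$, and, for the telescopings below, the hybrid forms in which only some of the six measures have been smoothed. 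As in the proof of Theorem~\ref{thm: two set Szekely quntitative}, I fix at the outset $\epsilon$ small (of size $\sim_\Gamma\delta^2$) and $J$ large (of size $\sim_\Gamma\delta^{-1}\log(\epsilon^{-1})$), decompose
\[
\mathcal{N}^0_\lambda=\underbrace{\mathcal{N}^1_\lambda}_{I_s}+\underbrace{(\mathcal{N}^\epsilon_\lambda-\mathcal{N}^1_\lambda)}_{I_e}+\underbrace{(\mathcal{N}^0_\lambda-\mathcal{N}^\epsilon_\lambda)}_{I_u},
\]
and aim to prove $I_s\gtrsim_\Gamma\delta R^2$, $|I_u|\lesssim_\Gamma\epsilon^{1/2}R^2$, and---after summing over the lacunary scales $\lambda_1,\dots,\lambda_J$ and pigeonholing---$|I_e|\lesssim_\Gamma J^{-1}\log(\epsilon^{-1})R^2$ for at least one $\lambda=\lambda_j$; the choices of $\epsilon$ and $J$ then make $I_e$ and $I_u$ small multiples of $I_s$ and yield $\mathcal{N}^0_{\lambda}\gtrsim_{\delta,\Gamma}R^2$.

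For the structured part I only use that $\Gamma$ is bounded and that $\mu$ is a probability measure, so that $\mu\ast\mathbbm{g}\ge c_\Gamma\,\mathbbm{1}_{[-c_\Gamma,c_\Gamma]^2}$ for some $c_\Gamma=c_\Gamma(\Gamma)>0$; since dilation commutes with convolution this upgrades to $\mu_\lambda\ast\mathbbm{g}_\lambda\gtrsim_\Gamma\lambda^{-2}\mathbbm{1}_{[-c_\Gamma\lambda,c_\Gamma\lambda]^2}$. Inserting this lower bound into each of the six slots of $\mathcal{N}^1_\lambda$ and integrating reduces $I_s$ to a positive $\Gamma$-dependent constant times the normalised average of $\mathcal{F}_{A,B}$ over $(z+[0,R]^2)\times([-c_\Gamma\lambda,c_\Gamma\lambda]^2)^6$ that appears in \eqref{eq: VC dimension, configuration variant quantitative assumption}, which is $\gtrsim\delta R^2$ by hypothesis; the only care needed is a routine adjustment of the admissible range of $r$ in order to absorb the constant $c_\Gamma$.

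For the uniform part I telescope, replacing the six measures $\mu_\lambda$ by $\mu_\lambda\ast\mathbbm{g}_{\epsilon\lambda}$ one slot at a time; for each slot the re-parametrised heat equation \eqref{eq:heateq} and the fundamental theorem of calculus express the corresponding difference as $\int_0^\epsilon\big(\text{the form with }\mu_\lambda\ast\mathbbm{k}_{t\lambda}\text{ placed in that slot}\big)\,t^{-1}\,\textup{d}t$, up to a harmless constant. To bound such a form I single out the variable $\eta$ of the active slot and change variables by $w=x+\eta$: exactly one of the ten indicator factors becomes a function of $w$ alone ($\mathbbm{1}_A(w)$ when $\eta$ is one of $v_1,v_2,v_3$, and $\mathbbm{1}_B(w+v_i)$ when $\eta$ is one of $s_1,s_2,s_3$), the product of the factors that genuinely depend on $\eta$ is a translate $H(w-\eta;\,\cdot\,)$, and integrating in $\eta$ against $\mu_\lambda\ast\mathbbm{k}_{t\lambda}$ is a convolution in $w$. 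A Cauchy--Schwarz in $w$ followed by Plancherel then bounds the form by $\|\widehat{\mu_\lambda\ast\mathbbm{k}_{t\lambda}}\|_\infty=\sup_\xi|\widehat\mu(\lambda\xi)\,\widehat{\mathbbm{k}}(t\lambda\xi)|\lesssim t^{1/2}$, by Lemma~\ref{lem: Fourier dimension estimate}(b), times the $L^2$ norms of two products of indicators---each $\lesssim|z+[0,R]^2|^{1/2}\lesssim R$ since $A,B\subseteq z+[0,R]^2$---times the total mass $1$ of the five remaining measures. Hence each telescoping term is $\lesssim_\Gamma t^{1/2}R^2$, and $\int_0^\epsilon t^{1/2}\,t^{-1}\,\textup{d}t\lesssim\epsilon^{1/2}$ gives $|I_u|\lesssim_\Gamma\epsilon^{1/2}R^2$.

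The error part carries the one genuinely technical ingredient. Writing $\mathcal{N}^1_\lambda-\mathcal{N}^\epsilon_\lambda=\int_\epsilon^1\tfrac{\textup{d}}{\textup{d}s}\mathcal{N}^s_\lambda\,\textup{d}s$, where $\mathcal{N}^s_\lambda$ has all six measures smoothed at the common scale $s\lambda$, differentiation produces six forms, each carrying a factor $\mu_\lambda\ast\mathbbm{k}_{s\lambda}$ in one slot. Summed over the lacunary scales $\lambda_1,\dots,\lambda_J$, the lossy bound $\|\widehat{\mu_{\lambda_j}\ast\mathbbm{k}_{s\lambda_j}}\|_\infty\lesssim s^{1/2}$ used for the uniform part would only give $O(JR^2)$; instead one must keep the multiplier $\widehat{\mathbbm{k}}(s\lambda_j\xi)$ intact and sum it using the essential-disjointness bound $\sum_j|\widehat{\mathbbm{k}}(s\lambda_j\xi)|\lesssim 1$ of Lemma~\ref{lem: Smooth disjointness estimate}. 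To do so I pass to the Fourier side and recognise each of these six forms as a singular Brascamp--Lieb form: an integral over the subspace $\{\sum_\omega\xi_\omega=0\}$ of the product of the Fourier transforms of the ten cut-off indicators and of the six multipliers $\widehat{\mu_{\lambda_j}}$, $\widehat{\mu_{\lambda_j}\ast\mathbbm{g}_{s\lambda_j}}$, $\widehat{\mu_{\lambda_j}\ast\mathbbm{k}_{s\lambda_j}}$ evaluated at the relevant partial sums of the $\xi_\omega$. The needed inequality is, in principle, a skew-projection instance of Lemma~2 of \cite{DT20}; I prefer to give a short self-contained proof in the spirit of \cite{KP2023}, by performing the requisite number of Cauchy--Schwarz steps in the physical variables. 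Concretely it should bound each such form by $\big(\int|\widehat{\mathbbm{k}}(s\lambda_j\xi)|\,\Psi(\xi)\,\textup{d}\xi\big)R^2$ for a fixed integrable profile $\Psi$; summing over $j$ with Lemma~\ref{lem: Smooth disjointness estimate}, integrating $\int_\epsilon^1 s^{-1}\,\textup{d}s=\log(\epsilon^{-1})$, and pigeonholing then furnish a scale $\lambda_j$ with $|\mathcal{N}^1_{\lambda_j}-\mathcal{N}^\epsilon_{\lambda_j}|\lesssim_\Gamma J^{-1}\log(\epsilon^{-1})R^2$, and assembling the three bounds with the chosen $\epsilon$ and $J$ completes the argument. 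I expect this singular Brascamp--Lieb estimate to be the main obstacle: one must arrange that the oscillatory multiplier $\widehat{\mathbbm{k}}(s\lambda_j\,\cdot\,)$ survives intact, so that the lacunarity of the scales can be exploited, while still dominating the remaining entangled tenfold product by a quantity of size $R^2$.
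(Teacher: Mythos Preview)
Your overall architecture---structured/error/uniform decomposition of $\mathcal{N}^0_\lambda$---matches the paper exactly, and your treatments of the structured part and of the uniform part are essentially the paper's arguments.

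The error part, however, diverges from the paper in a way that leaves a real gap. You propose to keep the multiplier $\widehat{\mathbbm{k}}(s\lambda_j\xi)$ intact after Cauchy--Schwarz, obtain a bound of the shape $\big(\int|\widehat{\mathbbm{k}}(s\lambda_j\xi)|\,\Psi(\xi)\,\textup{d}\xi\big)R^2$ with a \emph{fixed} profile $\Psi$, and then sum in $j$ via Lemma~\ref{lem: Smooth disjointness estimate}. The obstruction is that the other five slots still carry $\mu_{\lambda_j}\ast\mathbbm{g}_{s\lambda_j}$, so whatever profile emerges from your Cauchy--Schwarz steps will itself depend on $\lambda_j$ (and on $s$); Lemma~\ref{lem: Smooth disjointness estimate} is then not directly applicable. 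This is precisely why the one-slot Szek\'ely argument does not simply iterate here.

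The paper handles this differently. It first \emph{removes} the measures $\mu$ from all six slots by the Gaussian-domination inequalities \eqref{eq: Domination of Gaussian with t lambda}--\eqref{eq: Domination of Gaussian with r}, at a cost of $\varepsilon^{-18}$, turning $\mathcal{L}^{k,\varepsilon,1}_\lambda$ into a superposition of forms built purely from $\mathbbm{g}_{s\gamma_i}$ and one $\mathbbm{k}_{s\sqrt{2+\gamma_k^2}}$. Lacunarity is then exploited in the physical $s$-variable (the intervals $[e't\lambda_j,ee't\lambda_j]$ have bounded overlap), not on the Fourier side; Lemma~\ref{lem: Smooth disjointness estimate} is never used for the error term. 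What remains is a genuine singular Brascamp--Lieb form with Gaussian kernels, which the paper bounds by a separate result (Theorem~\ref{thm: Singular Brascam-Lieb bound}, proved by iterated Cauchy--Schwarz and the telescoping Lemma~\ref{lem: continuous telescoping lemma}). Consequently the paper's error bound is $C_e\varepsilon^{-18}\log(\varepsilon^{-1})R^2$, not the $\log(\varepsilon^{-1})R^2$ you anticipate, and the choice of $J$ must absorb this extra $\varepsilon^{-18}$.
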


\begin{proof}[Theorem \ref{thm: VC dimension, configuration variant quantitative} implies Theorem \ref{thm: VC dimension, configuration variant qualitative}]
 Assume that Theorem \ref{thm: VC dimension, configuration variant quantitative} is true and that Theorem \ref{thm: VC dimension, configuration variant qualitative} is not. Then, there exist measurable sets $A, B \subseteq \mathbbm{R}^2$ such that $\overline{\delta}_{VC}(A,B)>0$ and there exists a sequence of scales $(\lambda_m)_{m \in \mathbbm{N}}$ such that for all $m \in \mathbbm{N}$ and for all $x \in B$ and all $v_1,v_2,v_3,s_1,s_2,s_3 \in \lambda_m\Gamma$ either \eqref{eq: graph points which are translations} or \eqref{eq: graph points which shatter} does not hold.
 By passing to a subsequence, we can assume that $\lambda_1 > 1$ and $2\lambda_m \leq \lambda_{m+1}$ for all $m \in \mathbbm{N}$. We now apply Theorem \ref{thm: VC dimension, configuration variant quantitative} with $\delta=\frac{\overline{\delta}_{VC}(A,B)}{4}$ and let $J$ be as in the conclusion of Theorem \ref{thm: VC dimension, configuration variant quantitative}.
Unpacking Definition \ref{def: density nearness condition}, we first find $M \geq 1$ large enough as well as a sequence of positive real numbers $(R_m)_{m \in \mathbbm{N}}$ converging to infinity and a sequence $(z_m)_{m \in \mathbbm{N}}$ of points in $\mathbbm{R}^2$ such that for all $m \in \mathbbm{N}$
    \begin{align*}
        \inf_{M \leq r \leq 2^{-6}R_m}
    \frac{1}{{(2r)^{12}}}\int_{z_m+[0,R_m]^2}\int_{([-r,r]^2)^6} &\mathcal{F}_{A,B}(x;v_1,v_2,v_3;s_1,s_2,s_3)\\
    &\textup{d}v_1\,\textup{d}v_2\,\textup{d}v_3\,\textup{d}s_1\,\textup{d}s_2\,\textup{d}s_3\,\textup{d}x > \delta R_m^2
    \end{align*}
    Since $(\lambda_k)_{k \in \mathbbm{N}}$ is a lacunary sequence, it follows that there exists some $i \in \mathbbm{N}$ such that $\lambda_i >M$ and since $R_m$ converges to infinity, there exists some $m \in \mathbbm{N}$ such that $\lambda_{i+J-1}<2^{-6}R_m$
    However, Theorem \ref{thm: VC dimension, configuration variant quantitative} now implies that there exists some $j \in \{0,1,\dots,J-1\}$ such that \eqref{eq: VC dimension, configuration variant quantitative conclusion} holds for $\lambda=\lambda_{i+j}$. A standard measure-theoretic argument now provides us with the existence of $x \in B$ and $v_1,v_2,v_3,s_1,s_2,s_3 \in \lambda_{i+j}\Gamma$ which satisfy \eqref{eq: graph points which are translations} and \eqref{eq: graph points which shatter}. Thus, we have arrived at a contradiction.
\end{proof}

Finally, we begin with the proof of Theorem \ref{thm: VC dimension, configuration variant quantitative}. We assume that for $J\in \mathbbm{N}$, which we specify {below}, we are given scales $\lambda_1,\lambda_2,\dots,\lambda_J >1$ which satisfy $2\lambda_k \leq \lambda_{k+1}$ for all $k=1,2,\dots,J-1$, and also $R >0$ such that $R \geq 2\lambda_J$, as well as $M\geq 1$ so that $\lambda_1,\lambda_2,\dots,\lambda_J \in [M,2^{-6}R]$. As before, we introduce the following notation for a given scale $\lambda$
\[ \mathcal{N}^{0}_{\lambda} :=
\int_{(\mathbbm{R}^2)^{7}} \mathcal{F}_{A,B}(x;v_1,v_2,v_3;s_1,s_2,s_3) \,\textup{d}\mu_{\lambda}(v_1)\,\textup{d}\mu_{\lambda}(v_2)\,\textup{d}\mu_{\lambda}(v_3)\,\textup{d}\mu_{\lambda}(s_1)\,\textup{d}\mu_{\lambda}(s_2)\,\textup{d}\mu_{\lambda}(s_3) \,\textup{d}x \]
and, for $\varepsilon>0$, we also consider
\begin{align*}
\mathcal{N}^{\varepsilon}_{\lambda} :=
\int_{(\mathbbm{R}^2)^{7}}  \mathcal{F}_{A,B}(x;v_1,v_2,v_3;s_1,s_2,s_3)&\prod_{i=1}^3(\mu_{\lambda}\ast \mathbbm{g}_{\varepsilon\lambda})(v_i)
\prod_{i=1}^3(\mu_{\lambda}\ast \mathbbm{g}_{\varepsilon\lambda})(s_i) \\ &{\textup{d}v_1\,\textup{d}v_2\,\textup{d}v_3\,\textup{d}s_1\,\textup{d}s_2\,\textup{d}s_3\,\textup{d}x}.
\end{align*}
We split the counting form $\mathcal{N}_{\lambda}^0$ into three parts:
\[
\mathcal{N}^{0}_{\lambda} = I_{\textup{str}} + I_{\textup{err}} + I_{\textup{uni}},
\]
where 
\[{I_{\textup{str}} := \mathcal{N}_{\lambda}^{1}}, \quad I_{\textup{err}} :=  \mathcal{N}^{\varepsilon}_{\lambda}-\mathcal{N}^{1}_{\lambda} , \quad I_{\textup{uni}} := \mathcal{N}^{0}_{\lambda} - \mathcal{N}^{\varepsilon}_{\lambda}.\]
Once more, we call these parts ``the structured part'', ``the error part'' and ``the uniform part'', respectively.
Throughout the proof, we will again use the letter $\lambda$ to mean any one of the scales $\lambda_1,\lambda_2,...,\lambda_J$ whenever the claim in question does not depend on which particular scale we are working with.
\subsection{The Structured Part bound}\label{sec: VC Structured part}
By applying an estimate analogous to \eqref{eq:elemconvineq}, we conclude that
\begin{align*}
\mathcal{N}^{1}_{\lambda} \gtrsim {\lambda^{-12}}
\int_{(\mathbbm{R}^2)^{7}}  &\mathcal{F}_{A,B}(x;v_1,v_2,v_3;s_1,s_2,s_3)\prod_{i=1}^3\mathbbm{1}_{[-\lambda,\lambda]^2}(v_i)
\prod_{i=1}^3\mathbbm{1}_{[-\lambda,\lambda]^2}(s_i) \\ &{\textup{d}v_1\,\textup{d}v_2\,\textup{d}v_3\,\textup{d}s_1\,\textup{d}s_2\,\textup{d}s_3\,\textup{d}x},
\end{align*}
and using assumption \eqref{eq: VC dimension, configuration variant quantitative assumption}, we deduce
\begin{equation}\label{eq: VCStructuredPart}
\mathcal{N}^{1}_{\lambda} \geq C_{\textup{str}}\delta R^2,
\end{equation}
for some constant $C_{\textup{str}}>0$.
\subsection{The Error Part bound}\label{sec: VC error part}
We will show
\begin{equation}\label{eq: VCErrorPart}
\sum_{j=1}^{J}  \big\vert \mathcal{N}^{1}_{\lambda_j}-\mathcal{N}^{\varepsilon}_{\lambda_j}\big\vert \leq C_{\textup{err}} \varepsilon^{-18} \log(\varepsilon^{-1}) R^{2},
\end{equation}
for some constant $C_{\textup{err}}>0$. By using the fundamental theorem of calculus and the reparametrized heat equation \eqref{eq:heateq}, we can write
\[
\mathcal{N}_{\lambda}^{1}-\mathcal{N}_{\lambda}^{\varepsilon} = {\sum_{k=1}^{6} \mathcal{L}_{\lambda}^{k,\varepsilon,1}},
\]
where for a fixed $k \in \left\{ 1,2,\dots,6 \right\}$,
\begin{align*}
\mathcal{L}_{\lambda}^{k,\varepsilon,1} &:= \int_{\varepsilon}^{1}\int_{(\mathbbm{R}^2)^7}\mathcal{F}_{6}(x;v_{1},v_{2},v_{3},v_{4},v_{5},v_{6})\,({\mu_\lambda \ast\mathbbm{k}_{t \lambda}})(v_k)\prod_{\substack{i=1 \\ i\neq k}}^{6}(\mu_{\lambda} \ast\mathbbm{g}_{t \lambda})(v_i) \,\textup{d}v_1\dots \textup{d}v_{6}\, \textup{d}x\frac{\textup{d}t}{t} \\
\end{align*}
and where $\mathcal{F}_{6}(x;v_{1},v_{2},v_{3},v_{4},v_{5},v_{6})$ is as in Definition \ref{def: Joint Upper Banach Density} for the particular choice of functions described in \eqref{eq: VCChoiceOfVertices}, {except that instead of $\mathbbm{1}_{\mathbbm{R}^2}$ we are assigning $\mathbbm{1}_{z+[-R,2R]^2}$ to the vertices which do not get assigned either $\mathbbm{1}_A$ or $\mathbbm{1}_B$. This reflects the localization done in Theorem \ref{thm: VC dimension, configuration variant quantitative} }. Our first goal is to bound away all appearances of the measure $\mu$ in a given $\mathcal{L}_{\lambda}^{k,\varepsilon,1}$. We introduce two additional parameters, which should be understood as auxiliary convolution
scales.
For $e':=10^{-2}e^{-1}$, which is just a suitably chosen small constant, and any $\lambda,t>0$, we have
$$
\int_{e' t \lambda}^{ee't\lambda} \frac{\textup{d}s}{s} = 1,
$$
so we can introduce an additional integral into the form { $\mathcal{L}_{\lambda}^{k,\varepsilon,1}$}, giving us slightly more flexibility. We also denote
\[
r := \sqrt{ (t \lambda)^{2}- 2s^2 }.
\]
and observe that, { provided that $s\in[e't\lambda,ee't\lambda]$, we have } $t\lambda \sim r \sim s.$ Using \eqref{eq: Convolution of partial derivatives of Gaussians} and \eqref{eq: Convolution of a gaussian Laplacian and a gaussian}, we get that 
\[
{\mu_{\lambda}} \ast \mathbbm{k}_{t \lambda} = \frac{(t \lambda)^2}{s^{2}}\sum_{l=1}^{2}\mathbbm{h}_{s}^{(l)}\ast\mathbbm{h}_{s}^{(l)}\ast\mathbbm{g}_{r}\ast{\mu_{\lambda}},
\]
and using this decomposition, followed by an application of \eqref{eq: Domination of Gaussian with t lambda} and \eqref{eq: Domination of Gaussian with r}, we bound $\lvert \mathcal{L}_{\lambda}^{k,\varepsilon,1}\rvert$ by
\begin{align*}
\varepsilon^{-18}\sum_{l=1}^2\int_{[1,\infty)^6}\int_{\varepsilon}^{1}\int_{e't\lambda}^{ee't\lambda}\int_{(\mathbbm{R}^2)^7}\int_{(\mathbbm{R}^2)^2}&\mathcal{F}_{6}(x;v_{1},\dots,v_{6})\mathbbm{h}_{s}^{(l)}(v_k-y_{1})\mathbbm{h}_{s}^{(l)}(y_{1}-y_{2})\\
&\mathbbm{g}_{s\gamma_{k}}(y_{2}) 
\prod_{\substack{i=1 \\ i\neq k}}^6(\mathbbm{g}_{s \gamma_{i}})(v_i) \, \textup{d}y_{1}\,\textup{d}y_{2}\,\textup{d}v_1\dots \textup{d}v_{6}\,\textup{d}x\\
&\frac{\textup{d}s}{s}\,\frac{\textup{d}t}{t}\,\frac{\textup{d}\gamma_{1}}{\gamma_{1}^2}\dots\frac{\textup{d}\gamma_{6}}{\gamma_{6}^2}.
\end{align*}
We can now collapse the convolution back, using \eqref{eq: Convolution of partial derivatives of Gaussians} and \eqref{eq: Convolution of a gaussian Laplacian and a gaussian} once more, in order to get that the above {is bounded by}
\begin{align*}
\varepsilon^{-18}\int_{\varepsilon}^{1}\int_{e't\lambda_{j}}^{ee't\lambda_{j}}\int_{[1,\infty)^6}\int_{(\mathbbm{R}^2)^7}&\mathcal{F}_{6}(x;v_{1},\dots,v_{6})\mathbbm{k}_{s \sqrt{2 + \gamma_{k}^2 }}(v_k) \\
&\prod_{\substack{i=1 \\ i\neq k}}^6(\mathbbm{g}_{s \gamma_{i}})(v_i) \,\textup{d}v_1\dots \textup{d}v_{6}\, \textup{d}x\,\frac{\textup{d}\gamma_{1}}{\gamma_{1}^2}\dots\frac{\textup{d}\gamma_{k}}{\gamma_{k}^2(2+\gamma_{k}^2)}\dots\frac{\textup{d}\gamma_{6}}{\gamma_{6}^2}\,\frac{\textup{d}s}{s}\,\frac{\textup{d}t}{t}.
\end{align*}
We also have to sum over the scales $\lambda_j$. At this point, we use the lacunary nature of the scales in question. For a fixed $t \in (\varepsilon,1)$, each interval $[e't\lambda_{j},ee't\lambda_{j}]$ can overlap at most two intervals of the form $[e't\lambda_k,ee't\lambda_k]$ for $k=1,2,\dots,J$. Therefore, a fixed $s \in [0,\infty)$ can lie only in two such intervals. Thus, after a routine application of Fubini's theorem, we can integrate in the $t$-variable and conclude that $\sum_{j=1}^{J}\lvert \mathcal{L}_{\lambda_{j}}^{k,\varepsilon,1} \rvert$ is bounded by
\begin{align*}
2\varepsilon^{-18}\log(\varepsilon^{-1})\int_{[1,\infty)^6}\int_{0}^{\infty}\int_{(\mathbbm{R}^2)^7}&\mathcal{F}_{6}(x;v_{1},\dots,v_{6})\mathbbm{k}_{s \sqrt{2 + \gamma_{k}^2 }}(v_{k}) \\
&\prod_{\substack{i=1 \\ i\neq k}}^6(\mathbbm{g}_{s \gamma_{i}})(v_i) \,\textup{d}v_1\dots \textup{d}v_{6}\, \textup{d}x\frac{\textup{d}s}{s}\frac{\textup{d}\gamma_{1}}{\gamma_{1}^2}\dots\frac{\textup{d}\gamma_{k}}{\gamma_{k}^2(2+\gamma_{k}^2)}\dots\frac{\textup{d}\gamma_{6}}{\gamma_{6}^2}.
\end{align*}
However, we now recognize that the above is actually a superposition of so-called singular Brascamp--Lieb forms, for which, in the following subsection, we obtain bounds which are uniform in the kernel scales. That is to say, Theorem \ref{thm: Singular Brascam-Lieb bound} below, allows us to conclude
\begin{align*}
\sum_{j=1}^{J}\lvert \mathcal{L}_{\lambda_{j}}^{k,\varepsilon,1} \rvert
&\lesssim \varepsilon^{-18}\log(\varepsilon^{-1}) {\int_{[1,\infty)^6}} \prod_{\eta\in \left\{ 0,1 \right\}^{6}}\lVert f_{\eta} \rVert_{\textup{L}^{2^{6}}} \frac{\textup{d}\gamma_{1}}{\gamma_{1}^2}\dots\frac{\textup{d}\gamma_{k}}{\gamma_{k}^2(2+\gamma_{k}^2)}\dots\frac{\textup{d}\gamma_{6}}{\gamma_{6}^2} \\
&\lesssim \varepsilon^{-18} \log(\varepsilon^{-1}) R^{2},
\end{align*}
which implies the desired bound.

\subsection{A particular case of singular Brascamp--Lieb inequalities}\label{sec: Singular Brascamp-Lieb}
We start by fixing notation which shall be used throughout this subsection.
For a fixed positive integer $n\in \mathbbm{N}$, {$j \in \{1,2,\dots, n\}$} and fixed scales $\alpha_1,\alpha_2, \dots, \alpha_n >0$ we consider the integral kernel $K^{\alpha_1,\alpha_2, \dots, \alpha_n}_j: (\mathbbm{R}^2)^n\to [0,+\infty)$
\[
K^{\alpha_1,\alpha_2, \dots, \alpha_n}_j(v_1,v_2,\dots,v_n) := \mathbbm{k}_{\alpha_j}(v_j) \prod_{\substack{ i = 1 \\ i \neq j}}^n \mathbbm{g}_{\alpha_i}(v_i).
\]
We also consider a finite family of functions $(f_{\eta})_{\eta \in \{0,1\}^{n}}$ in $\textup{L}^{2^n}(\mathbbm{R}^2)$. Our main object of interest is the following multilinear integral form
\begin{align*}
\mathcal{J}_j^{\alpha_{1},\alpha_{2},\dots,\alpha_{n}}((f_{\eta})_{\eta \in \{0,1\}^n}):=- \int_0^{\infty}\int_{(\mathbbm{R}^{2})^{n+1}}
    &\prod_{\eta \in \{0,1\}^n} f_{\eta}(x+\eta\cdot v) \\
    & K^{s\alpha_1,s\alpha_2, \dots, s\alpha_n}_j(v_1,v_2,\dots,v_n)\,\textup{d}v_1\dots \textup{d}v_n \, \textup{d}x\frac{\textup{d} s }{s}.
\end{align*}
{ We note that the minus sign in the definition ensures our forms are positive in the particular case we consider in Lemma \ref{lem: The case of the fixed Laplacian} below. }The main result of this subsection is
\begin{theorem}\label{thm: Singular Brascam-Lieb bound}
For any functions $(f_{\eta})_{\eta \in \{0,1\}^{n}}$ in $\textup{L}^{2^n}(\mathbbm{R}^2)$, any $\alpha_1,\alpha_2, \dots, \alpha_n >0$ and any $j \in \{1,2,\dots, n\}$, it follows that
\[
\lvert \mathcal{J}_j^{\alpha_{1},\alpha_{2},\dots,\alpha_{n}}((f_{\eta})_{\eta \in \{0,1\}^n})\rvert \lesssim_n \prod_{\eta \in \left\{ 0,1 \right\}^{n}}\lVert f_{\eta} \rVert_{\textup{L}^{2^n}(\mathbbm{R}^2)} {.}
\]
\end{theorem}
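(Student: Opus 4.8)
The plan is to reduce the multilinear form $\mathcal{J}_j^{\alpha_1,\dots,\alpha_n}$ to a telescoping of Gowers-box-type forms by exploiting the heat-kernel structure encoded in identities \eqref{eq: Convolution of two Gaussians}--\eqref{eq: Convolution of a gaussian Laplacian and a gaussian} and the re-parametrized heat equation \eqref{eq:heateq}. By rescaling we may assume $\alpha_i \sim 1$; more precisely, absorbing the fixed $\alpha_i$ into the $s$-integration and using that the singular integral $\int_0^\infty (\cdots)\, \mathrm{d}s/s$ is scale invariant, it suffices to bound $\mathcal{J}_j^{1,1,\dots,1}$. The key observation is that $\mathbbm{k}_{s}(v_j)$ is, up to the constant $2\pi s^2$, the $s$-derivative $\partial_s \mathbbm{g}_{s}(v_j)$ by \eqref{eq:heateq} (applied with $t=s$); hence the factor $\int_0^\infty \mathbbm{k}_{s\alpha_j}(v_j)\, \mathrm{d}s/s$ inside $\mathcal{J}_j$ should be thought of as the difference of the Gaussian kernel at scale $0$ and at scale $\infty$, i.e.\ as a genuine telescoping in the singular variable $v_j$.

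First I would split the product kernel across the hypercube: writing $r = (r', r_j)$ where $r_j$ is the $j$-th coordinate, group the $2^n$ functions $f_r$ into the $2^{n-1}$ pairs $\{f_{(r',0)}, f_{(r',1)}\}$ sharing the same $r'$. Integrating out the $v_i$ for $i \neq j$ against $\mathbbm{g}_{s\alpha_i}$ and integrating the $x$-variable, the form becomes a single-variable singular integral in $v_j$ of a product of $2^{n-1}$ differences of the shape $(F_{r'} \ast \mathbbm{g}_{s\beta})(\cdot) - (F_{r'} \ast \mathbbm{g}_{s\beta'})(\cdot)$, where $F_{r'}$ is an appropriate convolution of $f_{(r',0)}$ and $f_{(r',1)}$ against Gaussians from the remaining coordinates. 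At this stage the standard move (as in \cite{Kov12, Dur15, DT20}) is to apply the Cauchy--Schwarz inequality repeatedly in the box structure: bound the $n$-dimensional form by a product of $2^n$ ``diagonal'' forms, each of which is a nonnegative single-function square-function expression $\int_0^\infty \int |(\text{Gaussian square function of } f_r)|^2 \,\mathrm{d}s/s \,\mathrm{d}x$, and then recognize each such square function as a Littlewood--Paley-type operator bounded on $\mathrm{L}^2$ uniformly in the scales by Plancherel, since $\widehat{\mathbbm{k}}(\xi) = -4\pi^2|\xi|^2 e^{-2\pi|\xi|^2}$ decays and vanishes at the origin. Collecting the $2^n$ factors gives $\prod_r \|f_r\|_{\mathrm{L}^{2^n}}$ after one more application of H\"older's inequality to match exponents.

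The main obstacle I anticipate is organizing the Cauchy--Schwarz induction so that the singular (non-integrable near $s=0$) kernel $\mathbbm{k}_{s\alpha_j}$ is handled \emph{before} splitting, rather than after: naively Cauchy--Schwarz-ing first would leave $\int_0^\infty |\widehat{\mathbbm{k}_{s\alpha_j}}(\xi)|\, \mathrm{d}s/s$, which diverges logarithmically, so the telescoping/cancellation in the $v_j$ variable must be used to produce an honest $\mathrm{L}^2$ square function (with the extra $|\xi|^2$ factor from $\mathbbm{k}$ saving integrability at the origin and the Gaussian decay saving it at infinity) \emph{before} estimating. A clean way to enforce this is to Cauchy--Schwarz in the $2^{n-1}$ coordinates $i \neq j$ first (these involve only honest Gaussians and are harmless), reducing to the case $n=1$, and then treat the genuinely one-dimensional singular integral $\int_0^\infty \int_{\mathbbm{R}^2} f_{(0)}(x) (f_{(1)} \ast \mathbbm{k}_{s\alpha_j})(x)\, \mathrm{d}x\, \mathrm{d}s/s$ directly by Plancherel, where it equals $\int f_{(0)} \cdot (\text{constant multiple of } f_{(1)})$ since $\int_0^\infty \widehat{\mathbbm{k}}(s\alpha_j\xi)\, \mathrm{d}s/s$ is a finite constant independent of $\xi$ and $\alpha_j$. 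I expect the bookkeeping of which Gaussian scales $\beta$ accumulate on which function during the coordinate-by-coordinate reduction, together with verifying the scale-uniformity at each step, to be the only genuinely delicate part; everything else is the now-standard Gowers--Cauchy--Schwarz plus Littlewood--Paley machinery.
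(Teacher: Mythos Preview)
Your proposal identifies the right ingredients (iterated Cauchy--Schwarz, the heat-equation telescoping, Littlewood--Paley square functions) and your treatment of the $n=1$ case via $\int_0^\infty \widehat{\mathbbm{k}}(s\alpha_j\xi)\,\tfrac{\textup{d}s}{s}=\text{const}$ is correct. The gap is in the reduction step. You propose to ``Cauchy--Schwarz in the coordinates $i\neq j$ first (these involve only honest Gaussians and are harmless), reducing to the case $n=1$''. But Cauchy--Schwarz in a Gaussian coordinate $i$ is only available \emph{pointwise} in $(s,v_j,\ldots)$, since that is where the kernel $\mathbbm{g}_{s\alpha_i}$ is positive; after that pointwise estimate you are left integrating against $\lvert\mathbbm{k}_{s\alpha_j}(v_j)\rvert\,\tfrac{\textup{d}s}{s}$, which is exactly the divergent integral you warned about. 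To apply Cauchy--Schwarz at the level of the full form $\int_0^\infty(\cdots)\tfrac{\textup{d}s}{s}$ in the $r_i$-direction, you would need the form to be positive semidefinite as a bilinear pairing in that direction, and this is \emph{not} true a priori when the Laplacian sits in an unfrozen coordinate $j\neq i$. So the Gaussian coordinates are not ``harmless'' in the sense you need, and the reduction to $n=1$ does not go through.

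The paper closes this gap differently. One applies Cauchy--Schwarz in the \emph{singular} coordinate $j$ first, via the identity $\mathbbm{k}_{s\alpha_j}=\sum_l \mathbbm{h}^{(l)}_{s\alpha_j/\sqrt{2}}\ast\mathbbm{h}^{(l)}_{s\alpha_j/\sqrt{2}}$; this produces forms in which coordinate $j$ is ``frozen'' and which are genuinely nonnegative (Lemma~\ref{lem: The case of the fixed Laplacian}). The crucial missing idea is then the telescoping identity $\sum_{l=1}^n \mathcal{J}_l = 2\pi\int\prod_r f_r$ (Lemma~\ref{lem: continuous telescoping lemma}): combined with positivity, it lets one bound the frozen-$j$ form $\mathcal{J}_j$ by the sum of the $\mathcal{J}_l$ with $l$ \emph{not} frozen, to which the inductive hypothesis applies. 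This positivity--plus--telescoping mechanism (backward induction on the number of frozen coordinates) is what actually allows the Laplacian to be moved around and the induction to close; your proposal does not contain a substitute for it.
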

 The key idea of the proof is to gradually apply the Cauchy--Schwarz inequality, in order to ``symmetrize'' the form under consideration. Note that, a priori, we do not even know that our forms are well defined. However, we will prove the desired bound in the Proposition \ref{prop: Singular Brascamp Lieb inductive statement} by assuming that all functions under consideration belong to the Schwartz class. Theorem \ref{thm: Singular Brascam-Lieb bound} then easily follows by applying classic functional-analytic tools and standard approximation arguments, which both guarantee that our form is well defined and bounded. 

\begin{prop}\label{prop: Singular Brascamp Lieb inductive statement}
For any $n \in \mathbbm{N}$, any functions $(f_\eta)_{\eta \in \{0,1\}^n}$ belonging to the Schwartz class $\mathcal{S}(\mathbbm{R}^2)$ and for any $0\leq k \leq {n-1}$ it follows that for mutually distinct $i_{1},i_{2},\dots,i_{k} \in \left\{ 1,2,\dots,n \right\}$ and for all $a_{1},a_{2},\dots,a_{k} \in \left\{ 0,1 \right\}$ and all $j \in \left\{ 1,2,\dots,n \right\}\setminus \left\{ i_{1},i_{2},\dots,i_{k} \right\}$, the following bound holds
\[
\lvert \mathcal{J}_j^{\alpha_{1},\alpha_{2},\dots,\alpha_{n}}((f_{C_{(i_{1},a_{1}),(i_{2},a_{2}),\dots,(i_{k},a_{k})}(\eta)})_{\eta\in \{0,1\}^n})\rvert \lesssim_{n} \prod_{\eta \in \left\{ 0,1 \right\}^{n}}\lVert f_{C_{(i_{1},a_{1}),(i_{2},a_{2}),\dots,(i_{k},a_{k})}(\eta)} \rVert_{\textup{L}^{2^n}(\mathbbm{R}^{2})}{.}
\]
\end{prop}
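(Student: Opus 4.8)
The plan is to prove the proposition by induction on $k$, running from $k = n-1$ down to $k = 0$, where the base case $k = n-1$ is a direct computation and the inductive step uses a single Cauchy--Schwarz application that ``freezes'' one more coordinate. Actually, since the statement for $k=0$ (with empty freeze data) is exactly Theorem~\ref{thm: Singular Brascam-Lieb bound} for Schwartz functions, it is cleaner to induct downward: assume the bound holds with $k+1$ frozen coordinates, and deduce it for $k$ frozen coordinates. Throughout, the scales $\alpha_1,\dots,\alpha_n$ and the distinguished index $j$ are fixed, and one must keep careful track of the fact that each Cauchy--Schwarz step must be performed in a variable $v_m$ with $m \neq j$, so that the singular kernel $\mathbbm{k}_{s\alpha_j}(v_j)$ is never the one being integrated against in the doubling.

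For the base case $k = n-1$: the freeze data $(i_1,a_1),\dots,(i_{n-1},a_{n-1})$ together with the requirement $j \notin \{i_1,\dots,i_{n-1}\}$ forces $\{i_1,\dots,i_{n-1}\} = \{1,\dots,n\}\setminus\{j\}$. So the family $(f_{C_{\dots}(r)})_{r}$ depends, as $r$ ranges over $\{0,1\}^n$, only on the single free coordinate $r_j$: all functions are one of two functions $g_0, g_1$ (the specializations corresponding to $r_j = 0$ and $r_j = 1$), each appearing $2^{n-1}$ times. Writing $w = x + \sum_{i \neq j} a_i \alpha_i^{?}$-type shift — more precisely, after the substitution absorbing the frozen sums into $x$ — the form collapses to
\[
\mathcal{J} = -\int_0^\infty \int_{(\mathbbm{R}^2)^2} g_0(x)\, g_1(x + v_j)\, \mathbbm{k}_{s\alpha_j}(v_j)\, \Big(\prod_{i \neq j}\int_{\mathbbm{R}^2}\mathbbm{g}_{s\alpha_i}(v_i)\,\mathrm{d}v_i\Big)\,\mathrm{d}v_j\,\mathrm{d}x\,\frac{\mathrm{d}s}{s},
\]
where the inner Gaussian integrals each equal $1$. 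So $\mathcal{J} = -\int_0^\infty \langle g_0,\ \mathbbm{k}_{s\alpha_j}\ast \widetilde{g_1}\rangle \tfrac{\mathrm{d}s}{s}$ up to a reflection, and by Plancherel this is $-\int_0^\infty \int \widehat{g_0}(\xi)\overline{\widehat{g_1}(\xi)}\,\widehat{\mathbbm{k}}(s\alpha_j\xi)\,\mathrm{d}\xi\,\tfrac{\mathrm{d}s}{s}$. Since $\widehat{\mathbbm{k}}(\eta) = -4\pi^2|\eta|^2 e^{-2\pi|\eta|^2}$, the substitution $u = s\alpha_j|\xi|$ makes $\int_0^\infty |\widehat{\mathbbm{k}}(s\alpha_j\xi)|\,\tfrac{\mathrm{d}s}{s} = \int_0^\infty 4\pi^2 u^2 e^{-2\pi u^2}\tfrac{\mathrm{d}u}{u}$, a finite absolute constant independent of $\xi$ and $\alpha_j$. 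Hence $|\mathcal{J}| \lesssim \|\widehat{g_0}\|_{L^2}\|\widehat{g_1}\|_{L^2} = \|g_0\|_{L^2}\|g_1\|_{L^2}$, and since $\|g_0\|_{L^2}\|g_1\|_{L^2} = \prod_{r}\|f_{C(r)}\|_{L^{2^n}}^{1/2^{n-1}\cdot 2^{n-1}}$... well, one matches the exponents: each of $g_0,g_1$ appears $2^{n-1}$ times in the product on the right, so $\prod_r \|f_{C(r)}\|_{L^{2^n}} = \|g_0\|_{L^{2^n}}^{2^{n-1}}\|g_1\|_{L^{2^n}}^{2^{n-1}}$, which is not literally $\|g_0\|_{L^2}\|g_1\|_{L^2}$ — so in the base case one should instead apply Cauchy--Schwarz and interpolation, or simply note $L^2 \supseteq L^{2^n}$ is false; the correct route is that the base case already requires one Cauchy--Schwarz step too. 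This suggests the genuinely clean formulation keeps $k$ running down to $0$ with the step described next, and the true base case is degenerate ($n$ variables all frozen is impossible since $j$ must be free), so the induction bottoms out at a form in effectively one variable which is handled by the Plancherel computation above combined with Cauchy--Schwarz in the remaining free variable — I will present it as: the step reduces the number of free variables by one, and after $n-1$ steps we reach the one-free-variable form, which is estimated by the kernel-uniform bound on $\int_0^\infty|\widehat{\mathbbm{k}}(s\alpha_j\xi)|\tfrac{\mathrm{d}s}{s}$ together with Plancherel.

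For the inductive (Cauchy--Schwarz / symmetrization) step: given the bound with $k+1$ frozen indices, take a form with $k$ frozen indices $(i_1,a_1),\dots,(i_k,a_k)$ and a free index $m \in \{1,\dots,n\}\setminus(\{i_1,\dots,i_k\}\cup\{j\})$ (such an $m$ exists as long as $k < n-1$). Change variables in $v_m$ to split the product $\prod_r f_{C(r)}(x + r\cdot v)$ over the two values $r_m \in \{0,1\}$: write $y = x$, $y' = x + v_m$, so that $v_m = y' - y$ and the kernel factor is $\mathbbm{g}_{s\alpha_m}(y'-y)$. The integrand factors as $F_0(y;\ \text{other } v)\cdot F_1(y';\ \text{other } v)\cdot \mathbbm{g}_{s\alpha_m}(y'-y)\cdot(\text{rest of kernel})$, where $F_{a}$ collects the $2^{n-1}$ functions with $r_m = a$. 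Applying the Cauchy--Schwarz inequality in $(y,y')$ against the positive Gaussian weight $\mathbbm{g}_{s\alpha_m}(y'-y)\,(\text{rest})$ (legitimate since that weight is nonnegative and the non-frozen, non-$m$, non-$j$ Gaussian factors remain), we dominate the modulus of the form by the geometric mean of two forms in which the $m$-coordinate functions have been symmetrized — i.e., forms of the same type but with $i_m$... with $m$ now frozen to the value $a_m \in \{0,1\}$, one factor for $a_m = 0$ and one for $a_m = 1$. Each of these is a form with $k+1$ frozen coordinates (the old $k$ plus $m$), still with distinguished index $j$ unchanged and $j$ still free, so the inductive hypothesis applies to each. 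Multiplying out the two bounds and using that the $L^{2^n}$ norms recombine correctly — each original $f_{C(r)}$, after the two specializations and the geometric mean, contributes with total exponent matching $\prod_r \|f_{C(r)}\|_{L^{2^n}}$ by a bookkeeping identical to the standard Gowers--Cauchy--Schwarz exponent count — yields the claimed bound for $k$ frozen coordinates, completing the induction.

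The main obstacle I anticipate is twofold. First, the bookkeeping of $L^{2^n}$ norms through the repeated Cauchy--Schwarz steps: one must verify that after all $n - 1$ symmetrizations each function $f_r$ appears with exactly the exponent $1/2^{n-1}$... no, with exponent accounting that multiplies out to the single power $\|f_r\|_{L^{2^n}}$ on the right-hand side — this is exactly the exponent arithmetic behind the Gowers inner product, but here it is entangled with the kernel, so care is needed that the Gaussian kernel factors (which are being split and doubled along with the functions) do not affect the norm count, which they do not because each $\mathbbm{g}_{s\alpha_i}$ has total mass $1$ and the doubling of $\mathbbm{g}_{s\alpha_m}(y'-y)$ is absorbed as a weight. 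Second, and more delicate, is ensuring at every step that the singular kernel $\mathbbm{k}_{s\alpha_j}(v_j)$ is never the factor chosen for the Cauchy--Schwarz doubling: this is why the step requires a free index $m \neq j$, and why the induction terminates precisely when only $v_j$ (and $x$) remain free, at which point the single remaining integral is handled by Plancherel and the scale-uniform estimate $\int_0^\infty |\widehat{\mathbbm{k}}(s\beta\xi)|\,\tfrac{\mathrm{d}s}{s} \lesssim 1$ uniformly in $\beta > 0$ and $\xi \in \mathbbm{R}^2$. A final routine point is that everything is done for Schwartz $f_r$ to guarantee absolute convergence (so Fubini and the Cauchy--Schwarz manipulations are justified), and Theorem~\ref{thm: Singular Brascam-Lieb bound} then follows by density.
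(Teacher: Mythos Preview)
Your approach has a genuine gap at the heart of the inductive step. You propose to apply Cauchy--Schwarz in a \emph{non-Laplacian} variable $v_m$ (with $m\neq j$), using the positive Gaussian $\mathbbm{g}_{s\alpha_m}$ as the weight. But to get from a pointwise-in-$(s,v_j,\dots)$ Cauchy--Schwarz bound on the inner $(y,y')$-integral to a bound on $|\mathcal{J}_j|$, you are forced to put absolute values inside the remaining integrals; in particular you must replace $\mathbbm{k}_{s\alpha_j}(v_j)$ by $|\mathbbm{k}_{s\alpha_j}(v_j)|$. This is fatal: $\int_0^\infty \lVert\mathbbm{k}_{s\alpha_j}\rVert_{\textup{L}^1}\,\tfrac{\textup{d}s}{s}=\lVert\mathbbm{k}\rVert_{\textup{L}^1}\int_0^\infty\tfrac{\textup{d}s}{s}=\infty$. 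Equivalently, if you try to apply Cauchy--Schwarz in \emph{all} the outer variables at once so as to avoid taking absolute values first, the measure you must use as a weight contains the sign-changing factor $\mathbbm{k}_{s\alpha_j}(v_j)$, and Cauchy--Schwarz is unavailable. The scale-singularity carried by $\mathbbm{k}$ is precisely what makes the form ``singular Brascamp--Lieb'', and your step discards the cancellation in $\mathbbm{k}$.

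The paper does the exact opposite of what you suggest: it applies Cauchy--Schwarz in the \emph{Laplacian} variable $v_j$, after writing $\mathbbm{k}_{s\alpha_j}=c\sum_{l}\mathbbm{h}^{(l)}_{s\alpha_j/\sqrt{2}}\ast\mathbbm{h}^{(l)}_{s\alpha_j/\sqrt{2}}$ via \eqref{eq: Convolution of partial derivatives of Gaussians}. This splits the integrand as a product of two factors in a new dummy variable $u$, and the remaining weight (the Gaussians in the other $v_i$'s and $\textup{d}s/s$) is genuinely nonnegative, so Cauchy--Schwarz applies across \emph{all} scales simultaneously. The price is that the symmetrized forms have the coordinate $j$ among the frozen ones, so the inductive hypothesis does not apply directly; this is exactly why the paper needs the positivity Lemma~\ref{lem: The case of the fixed Laplacian} and the telescoping Lemma~\ref{lem: continuous telescoping lemma} to close the loop (bounding the ``bad'' sum $\mathcal{S}_1$ in terms of the ``good'' sum $\mathcal{S}_2$ and a H\"older term). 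Your scheme, by trying to keep $j$ always free, was designed to avoid that machinery---but it cannot work for the reason above. A secondary issue: your base case is also incorrect, since freezing the \emph{function labels} to two functions $g_0,g_1$ does not collapse the $2^n$-point product $\prod_{r}g_{r_j}(x+r\cdot v)$ to a two-point form; all $n$ integration variables $v_i$ remain.
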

{The proof of Proposition \ref{prop: Singular Brascamp Lieb inductive statement} will be done by backward induction on $0\leq k\leq {n-1}$. The base case $k=n-1$ follows the exact same proof as the proof of the inductive step which we show below. The only difference is that in the base case, instead of using the inductive hypothesis, we use the fact that the claim of Theorem \ref{thm: Singular Brascam-Lieb bound} holds in the case of all function $f_{\eta}$ being mutually equal. This fact is established by combining Lemmas \ref{lem: The case of the fixed Laplacian} and \ref{lem: continuous telescoping lemma} below, in a way similar to how the estimate \eqref{eq: UsingPositivitySBL} below is established.} Notice that Theorem \ref{thm: Singular Brascam-Lieb bound} is precisely recovered as the case $k=0$, where we agree that the cardinality of the empty set equals $0$, modulo the aforementioned functional analysis density argument. {We stress the importance of the assumption that the coordinate which carries the Laplacian has not appeared among the coordinates which have been fixed by the freeze function. This is precisely what will meaningfully allow us to reduce the number of functions we work with at a given step.} However, we shall also need to know what happens in the complementary case, i.e., if the ``Laplacian coordinate'' is actually fixed by the freeze function. {This is done in Lemma \ref{lem: The case of the fixed Laplacian} below, a variant of which has been used in a similar context in \cite{DK21}, \cite{DK22}, \cite{K20:anisotrop} and \cite{KP2023}. The proof of the lemma follows the approaches from \cite{Kov12} and \cite{Dur15}}. In order to be able to connect the two cases, we shall need Lemma \ref{lem: continuous telescoping lemma} below, which we state without proof because it is formally identical to the proof of the analogous statement in \cite{KP2023}. We also advise the interested reader to look at Lemma 3 in \cite{Dur15} and Lemma 4 in \cite{DKR18}.  
\begin{lem}\label{lem: The case of the fixed Laplacian}
For fixed scales $\alpha_1,\alpha_2, \dots, \alpha_n >0$, for functions $(f_{\eta})_{\eta \in \{0,1\}^{n}}$ in $\textup{L}^{2^n}(\mathbbm{R}^2)$, for fixed $k \leq n$, any coordinates $i_{1},i_{2},\dots,i_{k} \in \left\{ 1,2,\dots,n \right\}$ and any $a_1,a_2,\dots,a_k \in \{0,1\}$, we claim that for each ${j} \in \left\{ i_{1},i_{2},\dots,i_{k} \right\}$, we have that
\[
\mathcal{J}^{\alpha_{1},\alpha_{2},\dots,\alpha_{n}}_{{j}}((f_{C_{(i_{1},a_{1}),(i_{2},a_{2}),\dots,(i_{k},a_{k})}(\eta)})_{\eta \in \left\{ 0,1 \right\}^{n}}) \geq 0.
\]
\end{lem}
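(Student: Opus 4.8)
The plan is to prove that the form $\mathcal{J}^{\alpha_1,\dots,\alpha_n}_{j'}$ with its Laplacian coordinate $j'$ frozen is nonnegative, exploiting the re-parametrized heat equation to rewrite the $s$-integral as a perfect telescoping/monotonicity statement. First I would unfreeze notation: write $g_r := f_{C_{(i_1,a_1),\dots,(i_k,a_k)}(r)}$, so that the functions $g_r$ depend only on the unfrozen coordinates, and in particular since $j' \in \{i_1,\dots,i_k\}$ the index $r_{j'}$ is fixed to $a_{j'}$ in every factor $g_r$ — no function actually ``sees'' the variable $v_{j'}$ being split across a hypercube edge. Concretely, the product $\prod_{r}g_r(x+r\cdot v)$ factors as a function of $x + (\text{contributions of the unfrozen } v_i)$ that is independent of $v_{j'}$ except through the fixed bit $a_{j'}$; so integrating against the kernel $K^{s\alpha}_{j'}$, the $v_{j'}$-integral is simply $\int \mathbbm{k}_{s\alpha_{j'}}(v_{j'})\,\textup{d}v_{j'}$ shifted appropriately, and by \eqref{eq:heateq} this is the $s$-derivative of the corresponding Gaussian average.

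Next I would make the key substitution explicit. Using \eqref{eq:heateq}, $\mathbbm{k}_{s\alpha_{j'}} = 2\pi s \alpha_{j'}^2 \,\partial_{s}\big(\mathbbm{g}_{s\alpha_{j'}}\big)$ — up to the correct constant arising from the chain rule in the dilation parameter — so that, after the $\textup{d}s/s$ weight cancels the stray $s$, the whole form becomes
\[
\mathcal{J}^{\alpha_1,\dots,\alpha_n}_{j'}(\,(g_r)_r\,) = -\,c\int_0^\infty \frac{\textup{d}}{\textup{d}s}\,\Phi(s)\,\textup{d}s = c\,\big(\Phi(0^+) - \Phi(\infty)\big),
\]
where $c>0$ and $\Phi(s) := \int_{(\mathbbm{R}^2)^{n+1}} \big(\prod_r g_r(x+r\cdot v)\big)\,\prod_{i\neq j'}\mathbbm{g}_{s\alpha_i}(v_i)\,\mathbbm{g}_{s\alpha_{j'}}(v_{j'})\,\textup{d}v\,\textup{d}x$. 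The point is that $\Phi$ is, for each fixed $s$, an integral of the product of the $g_r$'s against a \emph{tensor product of Gaussians}, and one recognizes $\Phi(s)$ as a Gowers-type box form smoothed at scale $s$; such a quantity is manifestly nonnegative (it is an $L^2$ inner product / a square after the standard substitution and Fourier-analytic factorization, exactly as in the usual proof that Gowers norms are nonnegative). Hence $\Phi(s)\ge 0$ for all $s$, $\Phi(\infty) = 0$ by decay (each Gaussian spreads to zero in $L^\infty$ while the mass stays bounded, so dominated convergence forces the limit to vanish once one normalizes correctly), and therefore $\mathcal{J}^{\alpha_1,\dots,\alpha_n}_{j'} = c\,\Phi(0^+) \ge 0$, which is what we want. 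This is essentially the ``continuous telescoping'' mechanism of Lemma~\ref{lem: continuous telescoping lemma}, and the positivity of $\Phi$ is the analogue of the Cauchy--Schwarz positivity used throughout \cite{Kov12}, \cite{Dur15}.

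The main obstacle I anticipate is not conceptual but bookkeeping: one must verify that when the Laplacian coordinate $j'$ is among the frozen ones, the form genuinely collapses to a \emph{single} $s$-derivative with a clean sign, rather than a sum of derivatives in several coordinates with mixed signs. This hinges on the observation that after freezing, no $g_r$ depends on $v_{j'}$, so $v_{j'}$ appears only in the kernel; if instead $j'$ were unfrozen, splitting $x + \dots + r_{j'}v_{j'} + \dots$ across the hypercube would prevent this factorization and the heat-equation trick would not directly yield a monotone quantity — which is precisely why the hypothesis $j' \in \{i_1,\dots,i_k\}$ is essential. I would also need to justify the interchange of the $\textup{d}s$-integral with the spatial integrals and the evaluation of the boundary terms at $s\to 0^+$ and $s\to\infty$; for Schwartz (or merely $L^{2^n}$) inputs this is routine via dominated convergence, but it should be stated, and it is cleanest to first prove the identity for Schwartz functions and then invoke the density argument already flagged in the text. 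Modulo these verifications, the nonnegativity follows immediately from the sign of a smoothed Gowers box form, and I do not expect any genuine analytic difficulty beyond that.
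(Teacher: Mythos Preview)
Your argument rests on a misreading of the freeze function. The map $C_{(i_1,a_1),\dots,(i_k,a_k)}$ acts on the \emph{index} $r$ that selects which function $f_{C(r)}$ to place at a vertex; it does \emph{not} act on the argument. The product in the form is still
\[
\prod_{r\in\{0,1\}^n} f_{C(r)}\big(x + r\cdot v\big),
\]
with the genuine $r\cdot v$ in the argument. So half of the factors --- those with $r_{j'}=1$ --- are evaluated at points containing $+v_{j'}$, and the other half are not. The product is certainly not independent of $v_{j'}$, and the $v_{j'}$-integral is not a bare $\int \mathbbm{k}_{s\alpha_{j'}}(v_{j'})\,\textup{d}v_{j'}$ (which would in any case vanish, since $\mathbbm{k}=\Delta\mathbbm{g}$ has mean zero). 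What freezing \emph{does} give you is that the two functions sitting at vertices $r$ and $r'$ which differ only in the $j'$-th coordinate are the \emph{same} function; hence, for fixed $v_i$ ($i\neq j'$), the product factors as $F(x)\,F(x+v_{j'})$ for a single function $F$. The paper then writes $\mathbbm{k}_{s\alpha_{j'}}=2\sum_l \mathbbm{h}^{(l)}_{s\alpha_{j'}/\sqrt{2}}\ast \mathbbm{h}^{(l)}_{s\alpha_{j'}/\sqrt{2}}$ via \eqref{eq: Convolution of partial derivatives of Gaussians}, and the oddness of $\mathbbm{h}^{(l)}$ turns the $x,v_{j'}$-integration into $-\sum_l \lVert F\ast \mathbbm{h}^{(l)}\rVert_{\textup{L}^2}^2\le 0$; the outer minus sign in the definition of $\mathcal{J}_{j'}$ then yields nonnegativity.

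Your telescoping route has a second, independent problem. Replacing $\mathbbm{k}_{s\alpha_{j'}}$ by $2\pi s\,\partial_s \mathbbm{g}_{s\alpha_{j'}}$ does \emph{not} produce $\frac{\textup{d}}{\textup{d}s}\Phi(s)$, because the remaining Gaussians $\mathbbm{g}_{s\alpha_i}$, $i\neq j'$, also depend on $s$. The total $s$-derivative of $\Phi(s)$ corresponds to the \emph{sum} $\sum_{i=1}^n \mathcal{J}_i$ --- that is precisely the content of Lemma~\ref{lem: continuous telescoping lemma} --- not to the single term $\mathcal{J}_{j'}$. So there is no monotonicity identity of the shape you describe for $\mathcal{J}_{j'}$ alone. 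The genuinely salvageable ingredient in your sketch is the positivity of the quantity $\int F(x)F(x+v_{j'})\,\mathbbm{g}_{s\alpha_{j'}}(v_{j'})\,\textup{d}v_{j'}\,\textup{d}x$ (equivalently, $\widehat{\mathbbm{g}}>0$), but you never actually isolate this $F(x)F(x+v_{j'})$ structure; once you do, the $\mathbbm{h}\ast\mathbbm{h}$ decomposition (or, equivalently, the Fourier-side observation $\widehat{\mathbbm{k}}\le 0$) finishes the job directly, without any telescoping.
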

\begin{proof}
{
For a tuple of vectors $v\in (\mathbbm{R}^2)^n$, we denote 
\[
\mathcal{F}_v^{n-1}(y):=\prod_{\substack{\eta\in \left\{ 0,1 \right\}^{n}\\\eta_j=0}}f_{C_{(i_{1},a_{1}),(i_{2},a_{2}),\dots,(i_{k},a_{k})}(\eta)}(y+\eta \cdot v).
\]
}
Using \eqref{eq: Convolution of partial derivatives of Gaussians} combined with the fact that a convolution of two odd functions is even, after a simple calculation, we obtain that for some $C>0$, we have 
\begin{align*}
{\mathcal{J}^{\alpha_{1},\alpha_{2},\dots,\alpha_{n}}_j}=C \sum_{l=1}^{2}\int_{0}^{\infty} \int_{(\mathbbm{R}^2)^{n-1}}&\left\lVert{\mathcal{F}_v^{n-1}}\ast\mathbbm{h}_{\frac{s\alpha_{j}}{\sqrt{ 2 }}}^{(l)} \right\rVert_{\textup{L}^2(\mathbbm{R}^2)}^2 \prod_{\substack{i=1\\i\neq j}}^n\mathbbm{g}_{s\alpha_{i}}(v_{i})\,\prod_{\substack{i=1\\i\neq j}}^{n}\textup{d}v_i\,\frac{\textup{d} s }{s}
\end{align*}
which implies the desired claim.
\end{proof}
\begin{lem}\label{lem: continuous telescoping lemma}
   For a finite family of functions $(f_{\eta})_{\eta \in \{0,1\}^{n}}$ in $\mathcal{S}(\mathbbm{R}^2)$, and $\alpha_1,\alpha_2, \dots, \alpha_n >0$, it holds that
    \[
    \sum_{i=1}^n \mathcal{J}_i = 2 \pi \int_{\mathbbm{R}^2}\prod_{\eta\in \{0,1\}^n} f_{\eta}(x) \, \textup{d}x,
    \]
where we have briefly denoted $\mathcal{J}_i = \mathcal{J}_i^{\alpha_1,\alpha_2,\dots,\alpha_n}((f_\eta)_{\eta \in \{0,1\}^n})$.
\end{lem}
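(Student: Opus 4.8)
The plan is to deduce the identity from the reparametrized heat equation \eqref{eq:heateq} together with the fundamental theorem of calculus in the scaling variable $s$. The first observation is that, summed over $i$, the kernels collapse to a total derivative. Abbreviating $K_i := K_i^{s\alpha_1,\dots,s\alpha_n}$ and applying the chain rule to \eqref{eq:heateq} with $t=s\alpha_i$, one gets $\frac{\partial}{\partial s}\mathbbm{g}_{s\alpha_i}(v_i)=\alpha_i\cdot\frac{1}{2\pi s\alpha_i}\mathbbm{k}_{s\alpha_i}(v_i)=\frac{1}{2\pi s}\mathbbm{k}_{s\alpha_i}(v_i)$ for each $i$ (the $\alpha_i$ produced by differentiating cancels the one in the denominator $2\pi s\alpha_i=2\pi t$), so by the product rule
\[
\sum_{i=1}^{n} K_i(v_1,\dots,v_n) \;=\; 2\pi s\,\frac{\partial}{\partial s}\prod_{i=1}^{n}\mathbbm{g}_{s\alpha_i}(v_i).
\]
Inserting this into $\sum_{i=1}^n\mathcal{J}_i$, the prefactor $2\pi s$ cancels the measure $\tfrac{\textup{d}s}{s}$ appearing in the definition of $\mathcal{J}_i$, leaving
\[
\sum_{i=1}^{n}\mathcal{J}_i \;=\; -2\pi\int_0^{\infty}\int_{(\mathbbm{R}^2)^{n+1}}\Big(\prod_{r\in\{0,1\}^n}f_r(x+r\cdot v)\Big)\,\frac{\partial}{\partial s}\Big(\prod_{i=1}^{n}\mathbbm{g}_{s\alpha_i}(v_i)\Big)\,\textup{d}v_1\cdots\textup{d}v_n\,\textup{d}x\,\textup{d}s.
\]

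Next I would fix $x$ and set $\Psi_x(s):=\int_{(\mathbbm{R}^2)^{n}}\prod_{r}f_r(x+r\cdot v)\prod_{i}\mathbbm{g}_{s\alpha_i}(v_i)\,\textup{d}v$; differentiating under the integral sign (legitimate since the $f_r$ are Schwartz and all $s$-derivatives of the Gaussian tensor are dominated, locally uniformly in $s$, by integrable functions) and applying Fubini gives $\sum_i\mathcal{J}_i = -2\pi\int_{\mathbbm{R}^2}\int_0^{\infty}\Psi_x'(s)\,\textup{d}s\,\textup{d}x = -2\pi\int_{\mathbbm{R}^2}\big(\lim_{s\to\infty}\Psi_x(s)-\lim_{s\to0^+}\Psi_x(s)\big)\,\textup{d}x$, and it remains to evaluate the two boundary terms. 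As $s\to0^{+}$, the tensor $\prod_i\mathbbm{g}_{s\alpha_i}$ is an approximate identity concentrating at $v=0$, and since $v\mapsto\prod_r f_r(x+r\cdot v)$ is continuous with value $\prod_r f_r(x)$ at $v=0$, we obtain $\lim_{s\to0^+}\Psi_x(s)=\prod_r f_r(x)$. As $s\to\infty$, the substitution $v_i=s\alpha_i w_i$ turns $\Psi_x(s)$ into $\int_{(\mathbbm{R}^2)^n}\prod_r f_r\big(x+s\sum_i r_i\alpha_i w_i\big)\prod_i\mathbbm{g}(w_i)\,\textup{d}w$; for every $r\neq(0,\dots,0)$ the map $w\mapsto\sum_i r_i\alpha_i w_i$ is onto $\mathbbm{R}^2$ (some $\alpha_i>0$ has $r_i=1$), hence vanishes only on a null set, so $x+s\sum_i r_i\alpha_i w_i$ leaves every compact set for a.e.\ $w$, the product tends to $0$ a.e., and, being dominated by $\big(\prod_r\lVert f_r\rVert_\infty\big)\prod_i\mathbbm{g}(w_i)\in\textup{L}^1$, dominated convergence yields $\lim_{s\to\infty}\Psi_x(s)=0$. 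Collecting signs, $\int_0^{\infty}\Psi_x'(s)\,\textup{d}s=-\prod_r f_r(x)$, and integrating in $x$ produces exactly $\sum_{i=1}^n\mathcal{J}_i = 2\pi\int_{\mathbbm{R}^2}\prod_r f_r(x)\,\textup{d}x$.

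The only real content is the bookkeeping: checking that $\mathcal{J}_i$ is well defined for Schwartz data (near $s=0$ one uses that $\mathbbm{k}_{s\alpha_j}$ has vanishing integral, so the inner integral is $O(s^2)$ and $\int_0\frac{\textup{d}s}{s}$ converges; near $s=\infty$ the Gaussian factors decay polynomially) and that the Fubini interchanges, the differentiation under the integral sign, and the two limit evaluations above are all justified by the rapid decay of the $f_r$ and of the Gaussians and their derivatives. This is precisely the computation carried out in \cite{KP2023}, so I would either reproduce it in this form or simply refer to it; the main, rather mild, obstacle is verifying that $\int_{\mathbbm{R}^2}\int_0^\infty\lvert\Psi_x'(s)\rvert\,\textup{d}s\,\textup{d}x<\infty$ to license all these manipulations, which again follows by splitting the $s$-integral at $s\sim1$ and using the decay at the two ends.
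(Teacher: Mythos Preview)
Your argument is correct and is precisely the standard ``continuous telescoping'' computation the paper has in mind: the paper does not supply its own proof but declares it formally identical to the one in \cite{KP2023}, and what you wrote---heat equation plus product rule to recognise $\sum_i K_i$ as $2\pi s\,\partial_s\prod_i\mathbbm{g}_{s\alpha_i}$, followed by the fundamental theorem of calculus in $s$ with the boundary evaluations via the approximate identity at $s\to0^+$ and Schwartz decay at $s\to\infty$---is exactly that argument, including the bookkeeping for integrability near the two endpoints.
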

\begin{proof}[Proof of Proposition \ref{prop: Singular Brascamp Lieb inductive statement}]
As has already been mentioned the proof proceeds by induction on $k$. We show how to make an inductive step, i.e., how we reduce a number of different functions appearing in our form by a factor of $2$.
Let $k<n$ be arbitrary and also let $j \in \left\{ 1,2,\dots,n \right\}$, $i_{1},i_{2},\dots,i_{k} \in \left\{ 1,2,\dots,n \right\}\setminus\{j\}$ and $a_{1},a_{2},\dots,a_{k} \in \left\{ 0,1 \right\}$ be arbitrary. By using \eqref{eq: Convolution of partial derivatives of Gaussians}, introducing the change of variables $y=x+v_{j}$, $\textup{d}y=\textup{d}v_{j}$ and expanding the convolution,
we conclude that $\mathcal{J}_{j}^{\alpha_{1},\alpha_{2},\dots,\alpha_{n}}((f_{C_{(i_{1},a_{1}),(i_{2},a_{2}),\dots,(i_{k},a_{k})}(\eta)})_{\eta\in \{0,1\}^n})$ is, up to a constant, equal to
\begin{align*}
\sum_{l=1}^{2}\int_{0}^{\infty}\int_{(\mathbbm{R}^2)^{n}}&\bigg(\int_{\mathbbm{R}^2}\prod_{\substack{\eta\in \left\{ 0,1 \right\}^{n}\\\eta_{j}=0}}f_{C_{(i_{1},a_{1}),(i_{2},a_{2}),\dots,(i_{k},a_{k})}(\eta)}(x+\eta \cdot v)\mathbbm{h}_{\frac{s\alpha_{j}}{\sqrt{ 2 }}}^{(l)}(u-x)\,\textup{d}x\bigg)\\
&\bigg(\int_{\mathbbm{R}^2}\prod_{\substack{\eta\in \left\{ 0,1 \right\}^{n}\\\eta_{j}=1}}f_{C_{(i_{1},a_{1}),(i_{2},a_{2}),\dots,(i_{k},a_{k})}(\eta)}(y+C_{(j,0)}(\eta) \cdot v)\mathbbm{h}_{\frac{s\alpha_{j}}{\sqrt{2}}}^{(l)}(y-u)\,\textup{d}y\bigg)\\
&\prod_{\substack{i=1\\i\neq j}}^n\mathbbm{g}_{s\alpha_{i}}(v_{i})\,\textup{d}u\,\prod_{\substack{i=1\\i\neq j}}^n\textup{d}v_i\frac{\textup{d} s }{s}
\end{align*}
By applying the Cauchy--Schwarz inequality, expanding the square in each of the two factors, making changes of variables analogous to the one already made, exploiting the fact that $\mathbbm{h}^{(l)}$ is odd and finally once more collapsing the two convolutions in each of the two factors by again invoking \eqref{eq: Convolution of partial derivatives of Gaussians}, we conclude that we can bound 
\[\lvert \mathcal{J}_{j}^{\alpha_{1},\alpha_{2},\dots,\alpha_{n}}((f_{C_{(i_{1},a_{1}),(i_{2},a_{2}),\dots,(i_{k},a_{k})}(\eta)})_{\eta\in \{0,1\}^n}) \rvert^2\] 
by the product
\begin{equation}\label{eq: inductive step bound}
\begin{aligned}
&\lvert \mathcal{J}_{j}^{\alpha_{1},\alpha_{2},\dots,\alpha_{n}}((f_{C_{(i_{1},a_{1}),(i_{2},a_{2}),\dots,(i_{k},a_{k}),(j,0)}(\eta)})_{\eta\in \{0,1\}^n})\rvert\\\cdot 
&\lvert \mathcal{J}_{j}^{\alpha_{1},\alpha_{2},\dots,\alpha_{n}}((f_{C_{(i_{1},a_{1}),(i_{2},a_{2}),\dots,(i_{k},a_{k}),(j,1)}(\eta)})_{\eta\in \{0,1\}^n})\rvert
\end{aligned}
\end{equation}
Note that we cannot immediately apply the inductive hypothesis on 
\[
\mathcal{J}_{j}^{\alpha_{1},\alpha_{2},\dots,\alpha_{n}}((f_{C_{(i_{1},a_{1}),(i_{2},a_{2}),\dots,(i_{k},a_{k}),(j,a_j)}(\eta)})_{\eta\in \{0,1\}^n}),
\]
for $a_j \in \{0,1\}$, because the coordinate $j$ appears as one of the $k+1$ fixed coordinates in our freeze function and we explicitly excluded such cases in our inductive statement. Still, we will circumvent this problem by using Lemma \ref{lem: continuous telescoping lemma}. Let $(i_{k+1},a_{k+1})$ stand for either one of the two pairs $(j,0)$ or $(j,1)$ above.
Denote the following
\begin{align*}
\mathcal{S}_{1}&:=\sum_{\substack{l=1 \\ l\in \left\{ i_{1},i_{2},\dots,i_{k+1} \right\}}}^n\mathcal{J}_{l}^{\alpha_{1},\alpha_{2},\dots,\alpha_{n}}((f_{C_{(i_{1},a_{1}),(i_{2},a_{2}),\dots,(i_{k+1},a_{k+1})}(\eta)})_{\eta\in \{0,1\}^n}), \\
\mathcal{S}_{2}&:=\sum_{\substack{l=1 \\ l\not\in \left\{ i_{1},i_{2},\dots,i_{k+1} \right\}}}^n\mathcal{J}_{l}^{\alpha_{1},\alpha_{2},\dots,\alpha_{n}}((f_{C_{(i_{1},a_{1}),(i_{2},a_{2}),\dots,(i_{k+1},a_{k+1})}(\eta)})_{\eta\in \{0,1\}^n}).
\end{align*}
We can bound $\lvert \mathcal{S}_2\rvert$ by using the inductive hypothesis and we can also bound $\lvert \mathcal{S}_{1} +\mathcal{S}_{2} \rvert$ by using Lemma \ref{lem: continuous telescoping lemma} and H\"older's inequality. Therefore, we obtain a bound for $\lvert \mathcal{S}_1\rvert$
\begin{align*}
 \lvert \mathcal{S}_{1} \rvert&\leq \lvert \mathcal{S}_{1} + \mathcal{S}_{2} \rvert+\lvert \mathcal{S}_{2} \rvert    \\
&\leq(2\pi+(n-k))
\prod_{\eta \in \left\{ 0,1 \right\}^{n}}\lVert f_{C_{(i_{1},a_{1}),(i_{2},a_{2}),\dots,(i_{k+1},a_{k+1})}(\eta)} \rVert_{\textup{L}^{{2^n}}(\mathbbm{R}^{2})}.
\end{align*}
In particular, using positivity guaranteed to us by Lemma \ref{lem: The case of the fixed Laplacian}, we conclude that 
\begin{equation}\label{eq: UsingPositivitySBL}
\begin{aligned}
\lvert &\mathcal{J}^{\alpha_{1},\alpha_{2},\dots,\alpha_{n}}_{j}((f_{C_{(i_{1},a_{1}),(i_{2},a_{2}),\dots,(i_{k+1},a_{k+1})}(\eta)})_{\eta \in \left\{ 0,1 \right\}^{n}})\rvert\\ &\leq \lvert \mathcal{S}_1 \rvert 
\lesssim_n\prod_{\eta \in \left\{ 0,1 \right\}^{n}}\lVert f_{C_{(i_{1},a_{1}),(i_{2},a_{2}),\dots,(i_{k+1},a_{k+1})}(\eta)} \rVert_{\textup{L}^{2^n}(\mathbbm{R}^{2})},
\end{aligned}
\end{equation}
which, after a brief calculation, gives us that \eqref{eq: inductive step bound} is bounded by
\begin{align*}
\prod_{\eta \in \left\{ 0,1 \right\}^n}\lVert f_{C_{(i_{1},a_{1}),(i_{2},a_{2}),\dots,(i_{k},a_{k})}(\eta)} \rVert_{\textup{L}^{2^n}(\mathbbm{R}^{2})}^2.
\end{align*}
By taking square roots, we obtain the desired claim.

\end{proof}
\subsection{The Uniform Part bound}\label{sec: VC uniform part}
For $0<\theta < \varepsilon < 1$, we focus on bounding
\[
\lvert \mathcal{N}_{\lambda}^{\theta} - \mathcal{N}_{\lambda}^{\varepsilon} \rvert.
\]
Similar to before, using the fundamental theorem of calculus and the reparametrized heat equation, we can write
\[
\mathcal{N}_{\lambda}^{\varepsilon}-\mathcal{N}_{\lambda}^{\theta} = \sum_{{k}=1}^{6} \mathcal{L}_{\lambda}^{k,\theta,\varepsilon},
\]
where for a fixed $k \in \left\{ 1,2,\dots,6 \right\}$, we have denoted, similarly as before,
\begin{align*}
\mathcal{L}_{\lambda}^{k,\theta,\varepsilon} &:= {\int_{\theta}^{\varepsilon}}\int_{(\mathbbm{R}^2)^7}\mathcal{F}_{6}(x;v_{1},v_{2},v_{3},v_{4},v_{5},v_{6})\,({\mu_\lambda \ast\mathbbm{k}_{t \lambda}})(v_k)\prod_{\substack{i=1 \\ i\neq k}}^{6}(\mu_{\lambda} \ast\mathbbm{g}_{t \lambda})(v_i) \,\textup{d}v_1\dots \textup{d}v_{6}\, \textup{d}x\frac{\textup{d}t}{t}. \\
\end{align*}
For fixed variables $v_1,\dots,v_{k-1},v_{k+1},\dots,v_6$, we denote
\[
F_k(u):=\prod_{\substack{\eta\in\{0,1\}^6\\ \eta_k=0}}\mathbbm{1}_{z+[0,R]^2}(x+\eta_1v_1+\dots +\eta_{k-1}v_{k-1}+{u}+\eta_{k+1}v_{k+1} + \dots +\eta_6 v_6).
\]
By noticing that ${\mathcal{F}_6}(x;v_1,\dots,v_6)\leq F_k(x+{v_k})F_k(x)$ and by making a change of variables $u=x+v_k$, $\textup{d}u = \textup{d}v_k$, we obtain the bound
\[
\mathcal{L}_{\lambda}^{k,\theta,\varepsilon} \leq {\int_{\theta}^{\varepsilon}}\int_{(\mathbbm{R}^2)^5}\prod_{\substack{i=1 \\ i\neq k}}^{6}(\mu_{\lambda} \ast\mathbbm{g}_{t \lambda})(v_i)\int_{\mathbbm{R}^2}(F_k \ast \mu_{\lambda}\ast \mathbbm{k}_{t \lambda})(u)F_k(u) \,\textup{d}u\,\textup{d}v_1\dots\textup{d}v_{k-1}\textup{d}v_k\dots \textup{d}v_6\,\frac{\textup{d}t}{t}.
\]
By applying Plancherel's formula in the inner integral, followed by Lemma \ref{lem: Fourier dimension estimate}\,(b) and then integrating in all the remaining variables, we conclude that
\[
\lvert \mathcal{L}_{\lambda}^{k,\theta,\varepsilon} \rvert \lesssim R^2\int_{\theta}^{\varepsilon} t^\frac{-1}{2}\,\textup{d}t,
\]
By integrating in $t$ and using the dominated convergence theorem to let $\theta  \to 0$, we obtain the bound 
\begin{equation}\label{eq: VCUniform}
\lvert \mathcal{N}_{\lambda}^{\varepsilon}-\mathcal{N}_{\lambda}^{0}  \rvert \leq C_{\textup{uni}} \varepsilon^\frac{1}{2}R^{2},
\end{equation}
for some constant $C_{\textup{uni}}>0$.
\subsection{Combining the estimates}\label{sec: VC estimate combination}
Finally, we choose
\begin{equation}\label{eq: VCParameterChoice}
{\varepsilon=\frac{1}{2}\Big(\frac{\delta C_{\textup{str}}}{3C_{\textup{uni}}}\Big)^2,\qquad J=\left\lceil\frac{3C_{\textup{err}}\log(\varepsilon^{-1})}{\delta \varepsilon^{18}C_{\textup{str}}}\right\rceil } 
\end{equation}
From \eqref{eq: VCErrorPart}, by pigeonholing in $j$, we can find a ``good scale'' $\lambda_j$ such that
\[
\lvert I_{\textup{err}} \rvert\leq \frac{C_{\textup{str}}}{3} \delta R^2.
\]
Combining this with \eqref{eq: VCStructuredPart} and \eqref{eq: VCUniform}, we obtain the desired claim
\[
\mathcal{N}^{0}_{\lambda_j}\geq I_{\textup{str}} - \lvert I_{\textup{err}}\rvert - \lvert I_{\textup{uni}}\rvert\geq\frac{C_{\textup{str}}}{3} \delta R^2.
\]
{\appendix
\section{An alternative formulation of upper Banach density}
The goal of this appendix is to show that the limit superior appearing in Definition \ref{def: upper Banach density} can be replaced by a limit. Throughout this appendix, we fix an integer $d \in \mathbbm{N}$ and fix a norm $\lVert \cdot \Vert$ on $\mathbbm{R}^d$. For $r>0$, let $B_r$ denote the ball with respect to $\lVert \cdot \rVert$, centered at the origin. The following two lemmas are essentially identical to calculations appearing as part of the proof of Lemma 3.1 from \cite{FKY22}. The only difference is that the authors of \cite{FKY22} consider only the case of the $\ell^2$ norm on $\mathbbm{R}^d$.
\begin{lem}\label{lem: FKY 1}
For every $r>s>0$, $z \in \mathbbm{R}^d$ and every measurable set $A\subseteq \mathbbm{R}^d$, it holds that
        \[
        \int_{z + B_r} \left\vert A \cap\left(y+ B_s\right)\right\vert \textup{d}y \geq
        \left\vert A \cap\left(z+ B_{r-s}\right)\right\vert  \left\vert B_{s}\right\vert.
        \]
\end{lem}
\begin{proof}
    Using Fubini's theorem and elementary set inclusions, we can calculate \begin{align*}
            \int_{z + B_r} \left\vert A \cap\left(y+ B_s\right)\right\vert \textup{d}y 
            &\geq \int_{\mathbbm{R}^d}\int_{\mathbbm{R}^d} \mathbbm{1}_{z + B_{r-s}}(x)\mathbbm{1}_{A}(x)\mathbbm{1}_{x+ B_s}(y) \textup{d}x\textup{d}y \\
            &=\left\vert A \cap\left(z+ B_{r-s}\right)\right\vert  \left\vert B_{s}\right\vert.
        \end{align*}
\end{proof}
\begin{lem}\label{lem: FKY 2}
For every $r>s>0$, $z \in \mathbbm{R}^d$ and every measurable set $A\subseteq \mathbbm{R}^d$, it holds that
  \[
    \left\vert A \cap\left(z+ B_{r-s}\right)\right\vert
    \geq \left\vert A \cap\left(z+ B_{r}\right)\right\vert
    -\left(1-\left(1-\frac{s}{r}\right)^d\right)\left\vert B_{r}\right\vert.
    \]
\end{lem}
\begin{proof}
Since $z+B_{r-s}\subseteq z+B_{r}$, using the monotonicity of Lebesgue measure, we easily obtain
\[
    \left\vert B_{r-s}\right\vert - \left\vert A \cap \left(z+ B_{r-s}\right) \right\vert \leq 
    \left\vert B_{r}\right\vert - \left\vert A \cap \left( z+ B_{r}\right) \right\vert,
    \]
which, after a trivial calculation, implies the desired claim.
\end{proof}

\begin{prop}\label{prop: appendix main}
Let $A \subseteq \mathbbm{R}^d$ be a measurable set. We claim that
\[
\overline{\delta}(A) = \lim_{R \to \infty}\sup_{z \in \mathbbm{R}^d} \frac{\vert A \cap (z+[0,R]^d)\vert}{R^d}.
\]
\end{prop}
\begin{proof}
    Let $\varepsilon>0$ be arbitrary. 
    Notice that for an arbitrary $r>0$, we have
    \[
    \sup_{z \in \mathbbm{R}^d} \frac{\vert A \cap (z+[0,r]^d)\vert}{r^d}=\sup_{z \in \mathbbm{R}^d} \frac{\left \vert A \cap \left(z + B_{\frac{r}{2}}\right) \right \vert}{\left\vert B_{\frac{r}{2}} \right\vert},
    \]
    where $B_{r}$ now stands for the ball of radius $r$ around the origin, with respect to the $\ell^{\infty}$ norm on $\mathbbm{R}^d$.
    By definition of the limit superior, there exists some $R_0>0$ such that for all $r \geq R_0$
    \begin{equation}\label{eq: relative density upper estimate}
    \sup_{z \in \mathbbm{R}^d} \frac{\left \vert A \cap \left(z + B_{r}\right) \right \vert}{\left\vert B_{r} \right\vert} < \overline{\delta}(A) + \varepsilon.
    \end{equation}
    Let $S\geq 2R_0$ be arbitrary. We can now find $R> \frac{S}{2}$ large enough and $z \in \mathbbm{R}^d$ such that 
    \[\left(1-\left(1-\frac{S}{2R}\right)^d\right) < \frac{\varepsilon}{2}\] and 
    \[
    \frac{\left \vert A \cap \left(z + B_R\right) \right \vert}{\left\vert B_R \right\vert} > \overline{\delta}(A) - \frac{\varepsilon}{2}.
    \]
    We apply Lemmas \ref{lem: FKY 1} and \ref{lem: FKY 2} with $r=R$ and $s=\frac{S}{2}$ to conclude
    \begin{align*}
        \frac{1}{\left\vert B_R \right\vert} \int_{z + B_R} \frac{\left\vert A \cap\left(y+ B_\frac{S}{2}\right)\right\vert}{\left\vert B_\frac{S}{2} \right\vert} \textup{d}y
        &\geq \frac{\left\vert A \cap\left(z+ B_{R-\frac{S}{2}}\right)\right\vert}{\left\vert B_R \right\vert}\\
        &\geq \frac{\left\vert A \cap\left(z+ B_{R}\right)\right\vert}{\left\vert B_R \right\vert}
        -\left(1-\left(1-\frac{S}{2R}\right)^d\right) \\
        &> \overline{\delta}(A) - \frac{\varepsilon}{2} - \frac{\varepsilon}{2}
        = \overline{\delta}(A) - \varepsilon.
    \end{align*}
A standard measure-theoretic pigeonholing argument now ensures that there exists some $y \in \mathbbm{R}^d$ such that
\[
\frac{\left\vert A \cap\left(y+ B_{\frac{S}{2}}\right)\right\vert}{\left\vert B_{\frac{S}{2}} \right\vert}
> \overline{\delta}(A) - \varepsilon.
\]
This combined with \eqref{eq: relative density upper estimate} gives us
\[
\left \vert \sup_{z \in \mathbbm{R}^d} \frac{\left \vert A \cap \left(z + B_{\frac{S}{2}}\right) \right \vert}{\left\vert B_{\frac{S}{2}} \right\vert} -  \overline{\delta}(A)\right \vert < \varepsilon,
\]
for all $S \geq 2R_0$. Since $\varepsilon>0$ was arbitrary, this is precisely the claim of the proposition.
\end{proof}}

\section*{Statements and Declarations}
The work was supported by the \emph{Croatian Science Foundation} under the project number HRZZ-IP-2022-10-5116 (FANAP). {This paper was also supported in part by the European Union -- NextGenerationEU through the National Recovery and Resilience Plan 2021--2026; institutional grant of University of Zagreb Faculty of Science IK IA 1.1.3. Impact4Math.
}
\section*{Acknowledgements}

{The author is grateful to Professor Vjekoslav Kovač for his patient guidance, helpful comments and stimulating discussions and also to Shobu Shiraki for plenty of insightful discussions, comments and suggestions. The author further wishes to express gratitude to Professor Alex Iosevich for the suggestion that studying the connection between VC dimension and flexible configurations in Euclidean spaces might lead to interesting results. {Furthermore, the author wishes to thank two anonymous referees whose comments greatly improved the text.}}

\begingroup
\small
\bibliography{DensityRamsey}{}
\bibliographystyle{plainurl}
\endgroup
\end{document}